\newif\ifentcs
\newenvironment{ack}{\textbf{Acknowledgments.}}{\vskip0.5em}
\newtheorem{theorem}{Theorem}[section]
\newtheorem{proposition}[theorem]{Proposition}
\newtheorem{corollary}[theorem]{Corollary}
\newtheorem{lemma}[theorem]{Lemma}
\newtheorem{fact}[theorem]{Fact}
\newenvironment{proof}{\emph{Proof.}}{\relax\hfill\qed}
\newtheorem{definition}[theorem]{Definition}
\newtheorem{remark}[theorem]{Remark}
\newcommand\qed{\hfill$\Box$\vskip0.2em}
\newcommand\eqdef{\mathrel{\buildrel \text{def}\over=}}
\newcommand{\pow}{\mathbb{P}}
\newcommand{\real}{\mathbb{R}}
\newcommand\Rplus{\real_+}
\newcommand\creal{\overline\real_+}
\newcommand\creals{\overline\real_{+\sigma}}
\newcommand\Lform{\mathcal L}
\newcommand\cb[1]{\mathbf{#1}} % or \pmb
\newcommand{\Topcat}{\cb{Top}}
\newcommand{\Loc}{\cb{Loc}}
\newcommand{\Frm}{\cb{Frm}}
\newcommand\directed{\sideset{}{^{\,\makebox[0pt]{$\scriptstyle\uparrow$}\!}}}
\newcommand\filtered{\sideset{}{^{\,\makebox[0pt]{$\scriptstyle\downarrow$}\!}}}
\newcommand\dsup{\directed\sup}
\newcommand\dcup{\directed\bigcup}
\newcommand\fcap{\filtered\bigcap}
\newcommand\diff{\setminus}
\newcommand\nat{\mathbb{N}}
\newcommand\Z{\mathbb{Z}}
\newcommand\upc{\mathop{\uparrow}}
\newcommand\uuarrow{\rlap{$\uparrow$}\raise.5ex\hbox{$\uparrow$}}%%
\newcommand\dc{\mathop{\downarrow}}
\newcommand\ddarrow{\rlap{$\downarrow$}\raise.5ex\hbox{$\downarrow$}}%%
\newcommand\interior[1]{int ({#1})}
\newcommand\limp{\mathrel{\Rightarrow}}
\newcommand\liff{\mathrel{\Leftrightarrow}}
\newcommand\img{\mathop{\text{Im}}}
\newcommand\Prev{\mathbb{P}}
\newcommand\Angel{{\mathtt{A}}}
\newcommand\Nature{{\mathtt{P}}}
\newcommand\AN{{\Angel\Nature}}
\newcommand\Hoare{{\mathcal H}}
\newcommand\Val{{\mathbf V}}
\newcommand\wk{\text{w}}
\newcommand\Borel[1]{{\mathcal B} (#1)}
\newcommand\rat{\mathbb{Q}}
\newcommand\Open{\mathop{\mathcal O}}
\newcommand\pt{\mathop{\mathsf{pt}}}
\newcommand\bPi{\boldsymbol\Pi}
\newcommand\Sierp{\mathbb{S}}
\newenvironment{recapthm}[1]{\noindent \textbf{Theorem~\ref{#1} (recap).}}
 \newcommand\ForAuthors[1]%          %  temporary remark for the
 \newcommand\papertitle{Domain-complete and LCS-complete spaces}
 \newcommand\paperabs{%
    We study $G_\delta$ subspaces of continuous dcpos, which we call
    domain-complete spaces, and $G_\delta$ subspaces of locally
    compact sober spaces, which we call LCS-complete spaces.  Those
    include all locally compact sober spaces---in particular, all
    continuous dcpos---, all topologically complete spaces in the
    sense of \v{C}ech, and all quasi-Polish spaces---in particular,
    all Polish spaces.  We show that LCS-complete spaces are sober,
    Wilker, compactly Choquet-complete, completely Baire, and
    $\odot$-consonant---in particular, consonant; that the
    countably-based LCS-complete (resp., domain-complete) spaces are
    the quasi-Polish spaces exactly; and that the metrizable
    LCS-complete (resp., domain-complete) spaces are the completely
    metrizable spaces.  We include two applications: on LCS-complete
    spaces, all continuous valuations extend to measures, and
    sublinear previsions form a space homeomorphic to the convex Hoare
    powerdomain of the space of continuous valuations.
  }
  \newcommand\kuone{%
    Graduate School of Human and Environmental Studies
  }
  \newcommand\kutwo{%
    Kyoto University, Kyoto, Japan
  }
  \newcommand\jsps{%
    The first author was supported by JSPS Core-to-Core Program,
    A. Advanced Research Networks and by JSPS KAKENHI Grant Number 18K11166.
  }
  \newcommand\digicosme{%
    This research was partially supported by Labex
    DigiCosme (project ANR-11-LABEX-0045-DIGICOSME) operated by ANR as
    part of the program ``Investissement d'Avenir'' Idex Paris-Saclay
    (ANR-11-IDEX-0003-02).
  }
  \newcommand\lsv{%
    LSV, ENS Paris-Saclay, CNRS, Universit\'e Paris-Saclay,
    France
    }
\begin{document}
\begin{frontmatter}
  \title{\papertitle}
  \author[KU]{Matthew de Brecht
    \thanksref{JSPS}\thanksref{dbemail}}
  \address[KU]{\kuone, \kutwo}
  \author[ENSC]{Jean Goubault-Larrecq %\thanksref{ALL}
    \thanksref{Digicosme}\thanksref{myemail}}
  \author[ENSC]{Xiaodong Jia
    \thanksref{Digicosme}\thanksref{xjemail}}
  \author[ENSC]{Zhenchao Lyu
    \thanksref{Digicosme}\thanksref{zlemail}}
  \address[ENSC]{\lsv}
  % \thanks[ALL]{Thanks to everyone who should be thanked}
  \thanks[JSPS]{\jsps}
  \thanks[dbemail]{Email:
    \href{mailto: matthew@i.h.kyoto-u.ac.jp} {\texttt{\normalshape
        matthew@i.h.kyoto-u.ac.jp}}}
  \thanks[Digicosme]{\digicosme}
  \thanks[myemail]{Email:
    \href{mailto: goubault@ens-paris-saclay.fr} {\texttt{\normalshape
        goubault@ens-paris-saclay.fr}}}
  \thanks[xjemail]{Email:
    \href{mailto: jia@lsv.fr} {\texttt{\normalshape jia@lsv.fr}}}
  \thanks[zlemail]{Email:
    \href{mailto:zhenchaolyu@gmail.com} {\texttt{\normalshape
        zhenchaolyu@gmail.com}}}
  \begin{abstract}
    \paperabs
  \end{abstract}
  \begin{keyword}
    Topology, domain theory, quasi-Polish spaces, $G_\delta$ subsets,
    continuous valuations, measures
    % Please list keywords from your paper here, separated by commas.
  \end{keyword}
\end{frontmatter}
\else
\title{\papertitle}
\author{Matthew de Brecht\thanks{\jsps}\\
  \url{matthew@i.h.kyoto-u.ac.jp}
  \\ \kuone \\ \kutwo \\
  \\
  Jean Goubault-Larrecq \\
  \url{goubault@ens-paris-saclay.fr} \\[0.5ex]
  Xiaodong Jia$^\dagger$ \\
  \url{jia@lsv.fr} \\[0.5ex]
  Zhenchao Lyu\thanks{\digicosme} \\
  \url{zhenchaolyu@gmail.com} \\[0.5ex]
  \lsv
  }
\begin{document}
\maketitle

\begin{abstract}
  \paperabs
\end{abstract}
\fi

\section{Motivation}
\label{sec:intro}

Let us start with the following question: for which class of
topological spaces $X$ is it true that every (locally finite)
continuous valuation on $X$ extends to a measure on $X$, with its
Borel $\sigma$-algebra?  The question is well-studied, and Klaus
Keimel and Jimmie Lawson have rounded it up nicely in
\cite{KL:measureext}.  A result by Mauricio Alvarez-Manilla \emph{et
  al.\/} \cite{alvarez-manilla00} (see also Theorem~5.3 of the paper
by Keimel and Lawson) states that every locally compact sober space
fits.

Locally compact sober spaces are a pretty large class of spaces,
including many non-Hausdorff spaces, and in particular all the
continuous dcpos of domain theory.  However, such a result will be of
limited use to the ordinary measure theorist, who is used to working
with Polish spaces, including such spaces as Baire space $\nat^\nat$,
which is definitely not a locally compact space.

It is not too hard to extend the above theorem to the following larger
class of spaces (and to drop the local finiteness assumption as
well):%
\newcommand\propextaux{%
  Every continuous valuation $\nu$ on $X$ extends to a measure on $X$ with
  its Borel $\sigma$-algebra.}
\newcommand\propext{%
  Let $X$ be a (homeomorph of a) $G_\delta$ subset of a locally
  compact sober space $Y$.  \propextaux }
\begin{theorem}
  \label{thm:ext}
  \propext
\end{theorem}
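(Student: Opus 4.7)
My proposed strategy is a three-step pushforward–extend–restrict. First, push $\nu$ along the inclusion $i\colon X \hookrightarrow Y$ to get a continuous valuation on $Y$; second, extend to a Borel measure on $Y$ via Alvarez-Manilla; third, restrict back to $X$ using the $G_\delta$ hypothesis.

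Define $\hat\nu(U) \eqdef \nu(U \cap X)$ for $U$ open in $Y$. Since the map $U \mapsto U \cap X$ preserves finite intersections, finite unions, and directed unions of opens, modularity and Scott-continuity transfer from $\nu$ to $\hat\nu$, making $\hat\nu$ a continuous valuation on $Y$. Because $Y$ is locally compact sober, the Alvarez-Manilla theorem recalled in the introduction yields a Borel measure $\hat\mu$ on $Y$ extending $\hat\nu$. Write $X = \bigcap_{n \in \nat} U_n$ with each $U_n$ open in $Y$, taken decreasing without loss of generality. Since $X$ is then Borel in $Y$, the subspace Borel $\sigma$-algebra on $X$ equals $\{B \in \Borel{Y} : B \subseteq X\}$, and I define $\mu$ to be the restriction of $\hat\mu$ to this sub-$\sigma$-algebra.

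The essential remaining verification is $\mu(V) = \nu(V)$ for every open $V$ of $X$. Picking $U$ open in $Y$ with $V = U \cap X$, this becomes $\hat\mu(U \cap X) = \hat\nu(U)$. The key computation is $\hat\mu(U \cap U_n) = \hat\nu(U \cap U_n) = \nu(U \cap X) = \hat\nu(U)$, using $X \subseteq U_n$. When $\hat\nu(U) < \infty$, continuity of $\hat\mu$ from above on the decreasing sequence $(U \cap U_n)_n$ with intersection $U \cap X$ delivers the desired identity.

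I expect the main obstacle to be the case $\hat\nu(U) = +\infty$, where continuity of measures from above fails. The natural fix is to exhaust $U \cap X$ by open subsets of $X$ of finite $\nu$-measure using Scott-continuity, and pass to a directed supremum; this certainly works for $\sigma$-finite $\nu$, and I suspect it works in general by exploiting the inner regularity of $\hat\mu$ by compact saturated subsets that is typically built into the Alvarez-Manilla construction, but the details warrant careful attention.
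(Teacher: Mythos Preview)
Your pushforward--extend--restrict outline is the same architecture the paper employs, but the obstacle arises one step earlier than you indicate: the Alvarez-Manilla theorem, as cited in the introduction, applies only to \emph{locally finite} continuous valuations, and your $\hat\nu$ need not be locally finite on $Y$. For instance, take $Y=\real$, $X=\real\diff\rat$ (a genuine $G_\delta$, not locally compact), and let $\nu$ come from Lebesgue measure with density $1/|x|$; then $\nu$ is locally finite on $X$ since $0\notin X$, yet every open neighborhood of $0$ in $Y$ has infinite $\hat\nu$-measure. So the sentence ``Alvarez-Manilla \ldots\ yields a Borel measure $\hat\mu$ on $Y$'' fails before you ever reach the Kolmogorov step, and the appeal to inner regularity of an object that has not been constructed cannot rescue it.

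The paper's repair follows the direction of your last paragraph but supplies two substantive ingredients you omit. First, one restricts to each open $U_i\subseteq X$ of finite $\nu$-measure, pushes the \emph{bounded} valuation $\nu_{|U_i}$ forward to $Y$, and extends it to a measure $\mu_i$ there; to form the directed supremum $\mu \eqdef \dsup_i \mu_i$ one needs $U_i\subseteq U_j \Rightarrow \mu_i\leq\mu_j$ on all of $\Borel{Y}$, not merely on opens. That monotonicity is not automatic from $\nu'_i\leq\nu'_j$ on opens, and the paper secures it via a separate lemma identifying any measure extension of a bounded valuation with its crescent outer measure, a formula visibly monotone in the input valuation. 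Second, the supremum $\mu$ only recovers $\nu_{|U_\infty}$ where $U_\infty=\dcup_i U_i$; on the complement --- points of $X$ with no open neighborhood of finite $\nu$-measure --- one must add the indiscrete $\{0,\infty\}$-valued measure supported there. Your sketch gestures at exhaustion by finite-measure opens but does not provide either the Borel-level monotonicity argument or the treatment of the non-locally-finite part.
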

We defer the proof of that result to
Section~\ref{sec:extens-cont-valu}.  The point is that we do have a
measure extension theorem on a class of spaces that contains both the
continuous dcpos of domain theory and the Polish spaces of topological
measure theory.  We will call such spaces \emph{LCS-complete}, and we
are aware that this is probably not an optimal name.  \emph{Topologically
  complete} would have been a better name, if it had not been taken
already \cite{Cech:bicomplete}.

Another remarkable class of spaces is the class of \emph{quasi-Polish}
spaces, discovered and studied by the first author
\cite{deBrecht:qPolish}.  This one generalizes both
$\omega$-continuous dcpos and Polish spaces, and we will see in
Section~\ref{sec:quasi-polish-spaces} that the class of LCS-complete
spaces is a proper superclass.  We will also see that there is no
countably-based LCS-complete space that would fail to be
quasi-Polish.  Hence LCS-complete spaces can be seen as an extension
of the notion of quasi-Polish spaces, and the extension is strict only
for non-countably based spaces.

\begin{figure}
  \centering
  \includegraphics[scale=0.4]{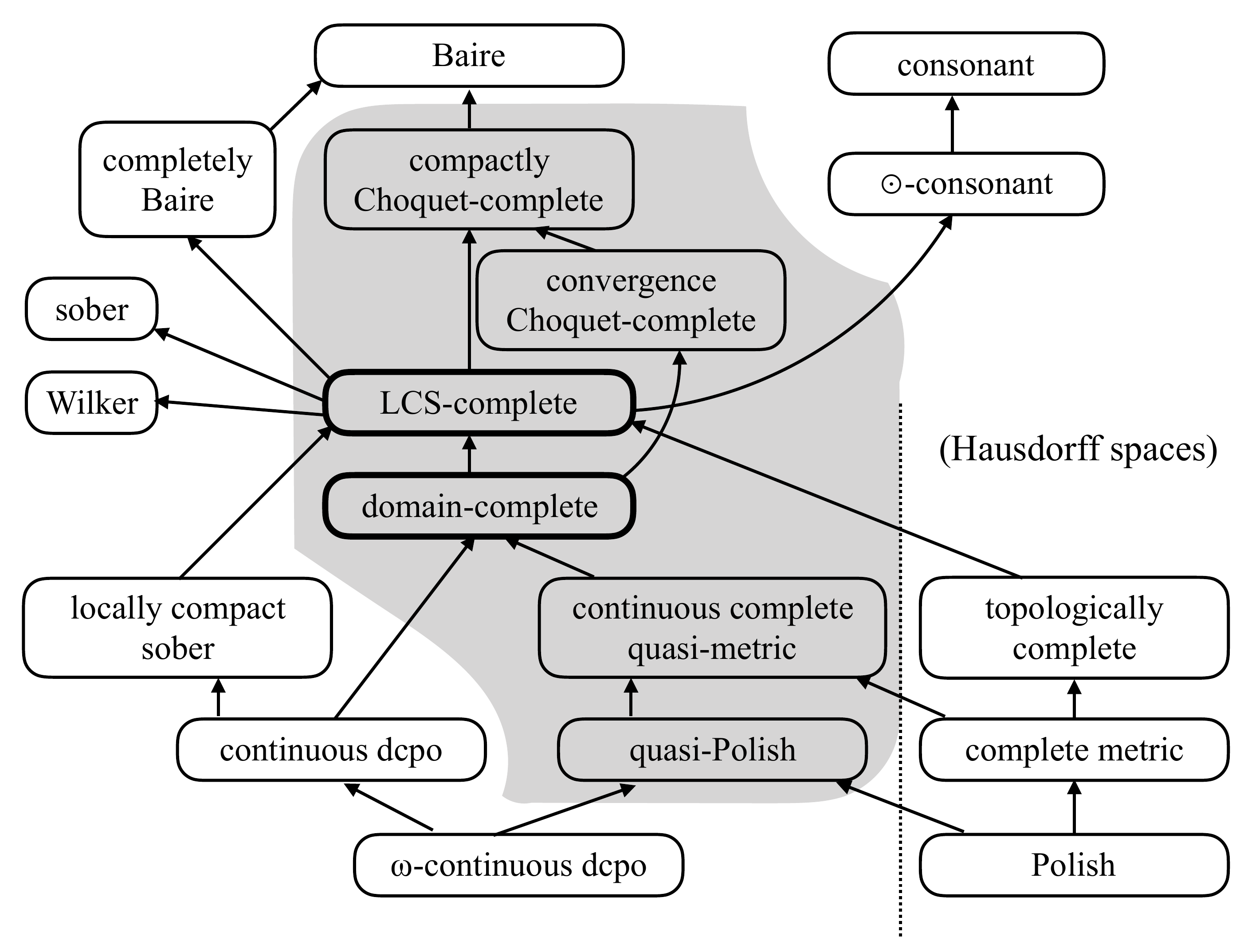}
  \caption{Domain-complete and LCS-complete spaces in relation to
    other classes of spaces}
  \label{fig:panorama}
\end{figure}

Generally, our purpose is to locate LCS-complete spaces, as well as
the related \emph{domain-complete} spaces inside the landscape formed
by other classes of spaces.  The result is summarized in
Figure~\ref{fig:panorama}.  The gray area is indicative of what
happens with countably-based spaces: for such spaces, all the classes
inside the the gray area coincide.

We proceed as follows.  Once we have recalled some background in
Section~\ref{sec:preliminaries}, and given the basic definitions in
Section~\ref{sec:defin-basic-prop}, we show that continuous complete
quasi-metric spaces, quasi-Polish spaces and topologically complete
spaces are all LCS-complete in
Sections~\ref{sec:compl-quasi-metr}--\ref{sec:vcech-compl-spac}.  Then
we show that all LCS-complete spaces are sober
(Section~\ref{sec:sobriety}), Wilker
(Section~\ref{sec:wilker-condition}), Choquet-complete and in fact a
bit more (Section~\ref{sec:choq-compl-baire}), Baire and even
completely Baire (Section~\ref{sec:baire-property}), consonant and
even $\odot$-consonant (Section~\ref{sec:consonance}).  In the
process, we explore the Stone duals of domain-complete and
LCS-complete spaces in Section~\ref{sec:stone-duality-domain}.  While
the class of LCS-complete spaces is strictly larger than the class of
domain-complete spaces, in Section~\ref{sec:choq-compl-baire}, we also
show that for countably-based spaces, LCS-complete, domain-complete,
and quasi-Polish are synonymous.  We give a first application in
Section~\ref{sec:space-lform-x}: when $X$ is LCS-complete, the Scott
and compact-open topologies on the space $\Lform X$ of lower
semicontinuous maps from $X$ to $\creal$ coincide; hence $\Lform X$
with the Scott topology is locally convex, allowing us to apply an
isomorphism theorem \cite[Theorem~4.11]{JGL-mscs16} beyond
core-compact spaces, to the class of all LCS-complete spaces.  In the
sequel (Sections~\ref{sec:limits}--\ref{sec:fail-cart-clos}), we
explore the properties of the categories of LCS-complete, resp.\
domain-complete spaces: countable products and arbitrary coproducts
exist and are computed as in topological spaces, but those categories
have neither equalizers nor coequalizers, and are not
Cartesian-closed; we also characterize the exponentiable objects in
the category of quasi-Polish spaces as the countably-based locally
compact sober spaces.  Section~\ref{sec:compact-subsets-lcs} is of
independent interest, and characterizes the compact saturated subsets
of LCS-complete spaces, in a manner reminiscent of a well-known
theorem of Hausdorff on complete metric spaces.  We prove
Theorem~\ref{thm:ext} in Section~\ref{sec:extens-cont-valu}, and we
conclude in Section~\ref{sec:conclusion}.

\begin{ack}
  % We thank Xiaodong Jia for finding a simplification in the proof of
  % Proposition~\ref{prop:cChoquet:2nd}.
  The second author thanks Szymon Dolecki for pointing him to
  \cite[Proposition~7.3]{DGL:consonant}.
\end{ack}

\section{Preliminaries}
\label{sec:preliminaries}

We assume that the reader is familiar with domain theory
\cite{GHKLMS:contlatt,AJ:domains}, and with basic notions in
non-Hausdorff topology \cite{JGL-topology}.

We write $\Topcat$ for the category of topological spaces and
continuous maps.

$\Rplus$ denotes the set of non-negative real numbers, and $\creal$ is
$\Rplus$ plus an element $\infty$, larger than all others.  We write
$\leq$ for the underlying preordering of any preordered set, and for
the specialization preordering of a topological space.  The notation
$\upc A$ denotes the upward closure of $A$, and $\dc A$ denotes its
downward closure.  When $A=\{y\}$, this is simply written $\upc y$,
resp.\ $\dc y$.  We write $\dcup$ for directed unions, $\dsup$ for
directed suprema, and $\fcap$ for filtered intersections.

Compactness does not imply separation, namely, a compact set is one
such that one can extract a finite subcover from any open cover.  A
\emph{saturated} subset is a subset that is the intersection of its
open neighborhoods, equivalently that is an upwards-closed subset in
the specialization preordering.

We write $\ll$ for the way-below relation on a poset $Y$, and
$\uuarrow y$ for the set of points $z \in Y$ such that $y \ll z$.

We write $\interior A$ for the interior of a subset $A$ of a
topological space $X$, and $\Open X$ for its lattice of open subsets.

A space is \emph{locally compact} if and only if every point has a
base of compact saturated neighborhoods.  It is \emph{sober} if and
only if every irreducible closed subset is the closure of a unique
point.  It is \emph{well-filtered} if and only if given any filtered
family ${(Q_i)}_{i \in I}$ of compact saturated subsets and every open
subset $U$, if $\bigcap_{i \in I} Q_i \subseteq U$ then
$Q_i \subseteq U$ for some $i \in I$.  In a well-filtered space, the
intersection $\bigcap_{i \in I} Q_i$ of any such filtered family is
compact saturated.  Sobriety implies well-filteredness, and the two
properties are equivalent for locally compact spaces.

A space $X$ is \emph{core-compact} if and only if $\Open X$ is a
continuous lattice.  Every locally compact space is core-compact, and
in that case the way-below relation on open subsets is given by
$U \Subset V$ if and only if $U \subseteq Q \subseteq V$ for some
compact saturated set
$Q$.  %, see \cite[Section~5.2.1]{JGL-topology} for example.
Conversely, every core-compact sober space is locally compact.

\section{Definition and basic properties}
\label{sec:defin-basic-prop}

A $G_\delta$ subset of a topological space $Y$ is the intersection of
a countable family ${(W_n)}_{n \in \nat}$ of open subsets of $Y$.
Replacing $W_n$ by $\bigcap_{i=0}^n W_i$ if needed, we may assume that
the family is \emph{descending}, namely that $W_0 \supseteq W_1
\supseteq \cdots \supseteq W_n \cdots$.

\begin{definition}
  \label{defn:ccqm}
  A \emph{domain-complete space} is a (homeomorph of a) $G_\delta$ subset of a
  continuous dcpo, with the subspace topology from the Scott topology.

  An \emph{LCS-complete space} is a (homeomorph of a) $G_\delta$
  subset of a locally compact sober space, with the subspace topology.
\end{definition}

\begin{remark}
  \label{rem:ccqm:pattern}
  There is a pattern here.  For a class $\mathcal C$ of topological
  spaces, one might call \emph{$\mathcal C$-complete} any homeomorph
  of a $G_\delta$ subset of a space in $\mathcal C$.  For example, if
  $\mathcal C$ is the class of stably (locally) compact spaces, we
  would obtain \emph{SC-complete} (resp., \emph{SLC-complete}) spaces.
  By an easy trick which we shall use in
  Lemma~\ref{lemma:ccqm:compact}, SC-complete and SLC-complete are the
  same notion.
\end{remark}

\begin{proposition}
  \label{prop:ccqm:easy}
  Every locally compact sober space is LCS-complete, in particular
  every quasi-continuous dcpo is LCS-complete.  Every continuous dcpo
  is domain-complete.  Every domain-complete space is LCS-complete.
\end{proposition}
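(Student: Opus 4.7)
The plan is straightforward: each of the three assertions follows immediately from Definition~\ref{defn:ccqm} combined with the trivial observation that every topological space $Y$ is itself a $G_\delta$ subset of $Y$ (take the descending family $W_n \eqdef Y$ for every $n \in \nat$, so that $\bigcap_{n \in \nat} W_n = Y$).

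For the first assertion, if $Y$ is locally compact sober, then $Y$ is a $G_\delta$ subset of itself by the observation above, so $Y$ is LCS-complete by definition. For the ``in particular'' clause, I would invoke the classical fact from domain theory that every quasi-continuous dcpo, equipped with its Scott topology, is both locally compact and sober (see \cite{GHKLMS:contlatt} or \cite{JGL-topology}); combined with the first assertion, this gives that every quasi-continuous dcpo is LCS-complete.

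For the second assertion, a continuous dcpo $Y$ with its Scott topology is a $G_\delta$ subset of itself by the same observation, hence domain-complete by definition. For the third assertion, suppose $X$ is (a homeomorph of) a $G_\delta$ subset of a continuous dcpo $Y$ with the subspace topology from the Scott topology. Since $Y$ is in particular quasi-continuous, it is locally compact and sober; therefore $X$ is (a homeomorph of) a $G_\delta$ subset of a locally compact sober space, that is, $X$ is LCS-complete.

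There is no genuine obstacle: the proposition is essentially a compilation of immediate consequences of Definition~\ref{defn:ccqm}, and the only nontrivial input is the well-known result that (quasi-)continuous dcpos in the Scott topology are locally compact sober spaces.
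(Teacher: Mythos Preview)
Your proof is correct and follows essentially the same approach as the paper's own proof: both argue that every space is $G_\delta$ in itself, then invoke the standard fact that quasi-continuous dcpos (hence continuous dcpos) are locally compact and sober. The only cosmetic difference is that the paper cites specific exercises in \cite{JGL-topology} for the local compactness and sobriety of quasi-continuous dcpos, whereas you cite the references more generally.
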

\begin{proof}
  Every space is $G_\delta$ in itself.  Every quasi-continuous dcpo is
  locally compact (being locally finitary compact
  \cite[Exercise~5.2.31]{JGL-topology}) and sober
  \cite[Exercise~8.2.15]{JGL-topology}.  The last part follows from
  the fact that every continuous dcpo is locally compact and sober---for
  example, because it is quasi-continuous.
  % the second part, we note that the continuous dcpos are exactly the
  % sober c-spaces in their Scott topology
  % \cite[Proposition~8.3.36]{JGL-topology}, and that every c-space is
  % trivial locally compact---a c-space is a space where every point has
  % a base of compact neighborhoods of the form $\upc y$, for a single
  % point $y$.  The same argument applies to quasi-continuous dcpos,
  % which are exactly the sober locally finitary compact spaces in their
  % Scott topology \cite[Exercise~8.3.39]{JGL-topology}---a space is
  % locally finitary compact if and only if every point has a base of
  % compact neighborhoods of the form $\upc A$, $A$ finite.
\end{proof}

We will see other examples of domain-complete spaces in
Sections~\ref{sec:compl-quasi-metr}, \ref{sec:quasi-polish-spaces},
and \ref{sec:vcech-compl-spac}.

\begin{remark}
  \label{rem:measurement}
  Given any continuous dcpo (resp., locally compact sober space) $Y$,
  and any descending family ${(W_n)}_{n \in \nat}$ of open subsets of
  $Y$, $X \eqdef \fcap_{n \in \nat} W_n$ is domain-complete (resp.,
  LCS-complete).  We can then define $\mu \colon Y \to \creal$ by
  $\mu (y) \eqdef \inf \{1/2^n \mid y \in W_n\}$.  This is continuous
  from $Y$ to $\creal^{op}$, i.e., $\creal$ with the Scott topology of
  the reverse ordering $\geq$.  Indeed, $\mu^{-1} ([0, a)) = W_n$
  where $n$ is the smallest natural number such that $1/2^n < a$.
  Then $X$ is equal to the \emph{kernel}
  $\ker \mu \eqdef \mu^{-1} (\{0\})$ of $\mu$.  Conversely, any space
  that is (homeomorphic to) the kernel of some continuous map
  $\mu \colon Y \to \creal^{op}$ from a continuous dcpo (resp.,
  locally compact space) $Y$ is equal to
  $\fcap_{n \in \nat} \mu^{-1} ([0, 1/2^n))$, hence is domain-complete
  (resp., LCS-complete).  This should be compared with Keye Martin's
  notion of \emph{measurement} \cite{Martin:measurement}, which is a
  map $\mu$ as above with the additional property that for every
  $x \in \ker \mu$, for every open neighborhood $V$ of $x$ in $Y$,
  there is an $\epsilon > 0$ such that
  $\dc x \cap \mu^{-1} ([0, \epsilon)) \subseteq V$.
\end{remark}
% \begin{proposition}
%   \label{prop:ccqm:1stcount}
%   Every domain-complete space is first-countable.
% \end{proposition}
% \begin{proof}
%   Let $X$ be the intersection of a descending sequence ${(W_n)}_{n \in
%     \nat}$ of open subsets of a continuous dcpo $Y$, and pick $x \in
%   X$.  Since $Y$ is continuous, $x$ is the supremum of a directed family
%   ${(x_i)}_{i \in I}$ of elements way-below $x$.  Since $W_0$ is open
%   and $x \in W_0$,
%   one of them is in $W_0$, say $x_{i_0}$.  Since $W_1$ is open and $x
%   \in W_1 \cap \uuarrow x_{i_0}$, there is an $i_1 \in I$ such that
%   $x_{i_1} \in W_1$ and $x_{i_0} \ll x_{i_1}$.  We continue this way
%   and obtain a sequence $x_{i_0} \ll x_{i_1} \ll \cdots$ of points
%   way-below $x$ in $Y$, such that $x_{i_k} \in W_k$ for every $k \in
%   \nat$.

%   Since $x_{i_k} \ll x$ for every $k$, $\uuarrow x_{i_k} \cap X$ is an
%   open neighborhood of $x$ in $X$.  For every open neighborhood $U$ of
%   $x$ in $X$, let $V$ be some open subset of $Y$ such that $V \cap X = U$.
% \end{proof}

\section{Continuous complete quasi-metric spaces}
\label{sec:compl-quasi-metr}

A \emph{quasi-metric} on a set $X$ is a map
$d \colon X \times X \to \creal$ satisfying the laws: $d (x,x)=0$;
$d(x,y)=d(y,x)=0$ implies $x=y$; and $d (x,z) \leq d (x,y) + d (y,z)$
(\emph{triangular inequality}).  The pair $X, d$ is then called a
\emph{quasi-metric space}.

Given a quasi-metric space, one can form its poset $\mathbf B (X, d)$
of \emph{formal balls}.  Its elements are pairs $(x, r)$ with
$x \in X$ and $r \in \Rplus$, and are ordered by
$(x, r) \leq^{d^+} (y, s)$ if and only if $d (x, y) \leq r-s$.
Instead of spelling out what a complete (a.k.a.,
\emph{Yoneda-complete} quasi-metric space) is, we rely on the
Kostanek-Waszkiewicz Theorem \cite{KW:formal:ball} (see also
\cite[Theorem~7.4.27]{JGL-topology}), which characterizes them in
terms of $\mathbf B (X, d)$: $X, d$ is \emph{complete} if and only if
$\mathbf B (X, d)$ is a dcpo.

We will also say that $X, d$ is a \emph{continuous complete}
quasi-metric space if and only if $\mathbf B (X, d)$ is a continuous
dcpo.  This is again originally a theorem, not a definition
\cite[Theorem~3.7]{GLN-lmcs17}.  The original, more complex
definition, is due to Mateusz Kostanek and Pawe\l{} Waszkiewicz.

There is a map $\eta \colon X \to \mathbf B (X, d)$ defined by
$\eta (x) \eqdef (x, 0)$.  The coarsest topology that makes $\eta$
continuous, once we have equipped $\mathbf B (X, d)$ with its Scott
topology, is called the \emph{$d$-Scott topology} on $X$
\cite[Definition~7.4.43]{JGL-topology}.  This is our default topology
on quasi-metric spaces, and turns $\eta$ into a topological embedding.

The $d$-Scott topology coincides with the usual open ball topology
when $d$ is a metric (i.e., $d (x,y)=d (y,x)$ for all $x, y$) or when
$X, d$ is a so-called Smyth-complete quasi-metric space
\cite[Propositions~7.4.46, 7.4.47]{JGL-topology}.  We will not say
what Smyth-completeness is (see Section~7.2, ibid.), except that every
Smyth-complete quasi-metric space is continuous complete, by the
Romaguera-Valero theorem \cite{RV:formal:ball} (see also
\cite[Theorem~7.3.11]{JGL-topology}).

\begin{theorem}
  \label{thm:ccqm:gdelta}
  For every continuous complete quasi-metric space $X, d$, the space
  $X$ with its $d$-Scott topology is domain-complete.
\end{theorem}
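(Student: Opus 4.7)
The plan is to use the formal ball construction $\mathbf B(X,d)$ as the witnessing continuous dcpo and exhibit $X$ (identified with $\eta(X)$) as a $G_\delta$ subset of it.

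First, I would recall the basic facts I need: by hypothesis $\mathbf B(X,d)$ is a continuous dcpo, and the canonical map $\eta\colon X\to\mathbf B(X,d)$, $\eta(x)\eqdef(x,0)$, is a topological embedding when $X$ carries the $d$-Scott topology and $\mathbf B(X,d)$ carries its Scott topology (this is precisely how the $d$-Scott topology was defined). So it suffices to show that the image $\eta(X)=\{(x,0)\mid x\in X\}$ is a $G_\delta$ subset of $\mathbf B(X,d)$ in the Scott topology.

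The natural candidate is to peel off formal balls by their radius. Let $U_n\eqdef\{(x,r)\in\mathbf B(X,d)\mid r<1/2^n\}$. Clearly $\eta(X)=\fcap_{n\in\nat}U_n$, so the whole argument reduces to showing that each $U_n$ is Scott-open in $\mathbf B(X,d)$. Upward-closedness is immediate: if $(x,r)\leq^{d^+}(y,s)$ then $s\leq r-d(x,y)\leq r$, so $r<1/2^n$ forces $s<1/2^n$. Inaccessibility by directed suprema uses the standard description of suprema in $\mathbf B(X,d)$: if ${((x_i,r_i))}_{i\in I}$ is a directed family with supremum $(x,r)$, then the radii form a filtered family in $\Rplus$ with $r=\finf_{i\in I}r_i$; hence if $r<1/2^n$, then $r_i<1/2^n$ for some $i$, so $(x_i,r_i)\in U_n$. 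Thus $U_n$ is Scott-open.

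Putting it together: $\eta(X)$ is the intersection of the countable descending family ${(U_n)}_{n\in\nat}$ of Scott-open subsets of the continuous dcpo $\mathbf B(X,d)$, and $\eta$ realizes a homeomorphism from $X$ with its $d$-Scott topology onto $\eta(X)$ with the subspace topology. Therefore $X$ is domain-complete.

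The only delicate step is the computation of directed suprema of formal balls, specifically the identity $r=\finf_i r_i$ for the supremum; everything else is bookkeeping. Since this identity is exactly the content of the Kostanek--Waszkiewicz characterization of Yoneda-completeness cited in the excerpt, I would simply invoke it rather than reprove it.
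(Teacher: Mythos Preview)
Your argument is correct and is exactly the paper's approach: the paper too exhibits $X \cong \bigcap_n W_n$ with $W_n = \{(x,r)\in\mathbf B(X,d) \mid r < 1/2^n\}$ Scott-open in the continuous dcpo $\mathbf B(X,d)$. One small correction: the identity $r=\finf_i r_i$ for directed suprema of formal balls is not the content of the Kostanek--Waszkiewicz theorem (which only asserts that $\mathbf B(X,d)$ is a dcpo) but rather the assertion that complete quasi-metric spaces are \emph{standard}, which the paper invokes from \cite[Proposition~2.2]{GLN-lmcs17}.
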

\begin{proof}
  Every standard quasi-metric space $X, d$ embeds as a $G_\delta$ set
  into $\mathbf B (X, d)$ \cite[Proposition~2.6]{GLN-lmcs17}, and
  every complete quasi-metric space is standard (Proposition~2.2,
  ibid.)  Explicitly, $X$ is homeomorphic to
  $\bigcap_{n \in \nat} W_n$ where
  $W_n \eqdef \{(x, r) \in \mathbf B (X, d) \mid r < 1/2^n\}$ is
  Scott-open.  Since $X, d$ is continuous complete, $\mathbf B (X, d)$
  is a continuous
  dcpo. %, hence in particular locally compact and sober.
\end{proof}

When $d$ is a metric, $\mathbf B (X, d)$ is a continuous poset
\cite[Corollary~10]{edalat98}, with $(x, r) \ll (y, s)$ if and only if
$d (x, y) < r-s$; also, $\mathbf B (X, d)$ is a dcpo if and only if
$X, d$ is complete in the usual, Cauchy sense
\cite[Theorem~6]{edalat98}.  Hence every complete metric space is
continuous complete in our sense.
\begin{corollary}
  \label{corl:ccqm:metric}
  Every complete metric space is domain-complete in its open ball
  topology.  \qed
\end{corollary}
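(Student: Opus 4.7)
The plan is to assemble this as a direct consequence of Theorem~\ref{thm:ccqm:gdelta} together with the two facts recalled in the paragraph immediately preceding the corollary. There is no genuine obstacle: the substance lies in Theorem~\ref{thm:ccqm:gdelta} and in the Edalat--Heckmann characterization cited just above.

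First I would verify that every complete metric space $X,d$ qualifies as a continuous complete quasi-metric space in the sense of Section~\ref{sec:compl-quasi-metr}. A metric is a special case of a quasi-metric. By the Edalat--Heckmann results recalled above (\cite[Corollary~10]{edalat98} and \cite[Theorem~6]{edalat98}), when $d$ is a metric the poset of formal balls $\mathbf{B}(X,d)$ is continuous, with way-below relation $(x,r) \ll (y,s) \iff d(x,y) < r-s$, and it is a dcpo exactly when $X,d$ is Cauchy-complete. Hence $\mathbf{B}(X,d)$ is a continuous dcpo, which by the Kostanek--Waszkiewicz theorem (in the form used as the definition here) means $X,d$ is continuous complete.

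Second I would invoke Theorem~\ref{thm:ccqm:gdelta} to conclude that $X$ with its $d$-Scott topology is domain-complete. It remains only to identify the $d$-Scott topology with the open ball topology. But this coincidence for metrics is exactly what was noted earlier in the section (via \cite[Proposition~7.4.46]{JGL-topology}), so the open ball topology on $X$ is domain-complete, as claimed.

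In summary, the proof is essentially a one-liner: metrics are quasi-metrics, Cauchy-completeness of the metric implies continuous-completeness in the formal-ball sense, the $d$-Scott and open-ball topologies agree, and Theorem~\ref{thm:ccqm:gdelta} finishes the argument. \qed
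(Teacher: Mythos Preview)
Your proposal is correct and matches the paper's approach exactly: the corollary is marked \qed{} in the paper precisely because it follows immediately from Theorem~\ref{thm:ccqm:gdelta} together with the Edalat--Heckmann facts and the metric case of the $d$-Scott/open-ball coincidence recalled in the preceding paragraphs. There is nothing to add.
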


\section{Quasi-Polish spaces}
\label{sec:quasi-polish-spaces}

Quasi-Polish spaces were introduced in \cite{deBrecht:qPolish}, and
can be defined in many equivalent ways.  The original definition is: a
quasi-Polish space is a separable Smyth-complete quasi-metric space
$X, d$, seen as a topological space with the open ball topology.  By
\emph{separable} we mean the existence of a countable dense subset in
$X$ with the open ball topology of $d^{sym}$, where $d^{sym}$ is the
symmetrized metric $d^{sym} (x, y) \eqdef \max (d (x, y), d (y, x))$.
By a lemma due to K\"unzi \cite{Kunzi:quasiuniform}, a quasi-metric
space is separable if and only if its open ball topology is
countably-based.

Since Smyth-completeness implies continuous completeness and also that
the open ball and $d$-Scott topologies coincide
\cite[Theorem~7.4.47]{JGL-topology}, Theorem~\ref{thm:ccqm:gdelta}
implies:
\begin{proposition}
  \label{prop:qPolish}
  Every quasi-Polish space is domain-complete.
\end{proposition}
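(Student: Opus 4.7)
The plan is to chain together the ingredients that the excerpt has just laid out. Given a quasi-Polish space $X$, I unfold the definition: $X$ carries the open ball topology of some separable Smyth-complete quasi-metric $d$. The goal is to hand this off to Theorem~\ref{thm:ccqm:gdelta}, which tells us that continuous complete quasi-metric spaces, equipped with the $d$-Scott topology, are domain-complete.

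First, I would invoke the Romaguera--Valero theorem (cited in Section~\ref{sec:compl-quasi-metr}) to pass from Smyth-completeness to continuous completeness of $X,d$; this gives the hypothesis of Theorem~\ref{thm:ccqm:gdelta}. Second, I would invoke Proposition~7.4.47 of \cite{JGL-topology} (also cited in the excerpt just before the statement) to identify the open ball topology on a Smyth-complete quasi-metric space with its $d$-Scott topology; this ensures that the topology stipulated in the definition of a quasi-Polish space is precisely the one for which Theorem~\ref{thm:ccqm:gdelta} produces a domain-completeness witness. Combining the two, Theorem~\ref{thm:ccqm:gdelta} then directly asserts that $X$, with its open ball topology, embeds as a $G_\delta$ subset of the continuous dcpo $\mathbf B(X,d)$ with its Scott topology, which is the definition of domain-complete.

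Separability plays no role in this argument; it would only be needed if one wanted the stronger conclusion that $X$ is a $G_\delta$ subset of an $\omega$-continuous dcpo. Since the proposition only asks for domain-completeness, that extra information can be ignored. There is no real obstacle here: the statement is essentially a corollary of Theorem~\ref{thm:ccqm:gdelta}, and the only thing to verify is that the two cited results line up correctly to supply its hypotheses, which they do verbatim.
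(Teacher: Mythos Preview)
Your proposal is correct and matches the paper's own argument essentially word for word: the paper derives the proposition directly from Theorem~\ref{thm:ccqm:gdelta} by noting that Smyth-completeness implies continuous completeness (Romaguera--Valero) and that the open ball and $d$-Scott topologies coincide for Smyth-complete spaces (\cite[Theorem~7.4.47]{JGL-topology}). Your observation that separability is not needed for this particular implication is also accurate.
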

Not every domain-complete space is quasi-Polish.  In fact, the
following remark implies that not every domain-complete space is even
first-countable.  We will see that all countably-based domain-complete
spaces \emph{are} quasi-Polish in Section~\ref{sec:choq-compl-baire}.

\begin{remark}
  \label{rem:powI}
  Let us fix an uncountable set $I$, and let
  $X \eqdef Y \eqdef \pow (I)$, with the Scott topology of inclusion.
  This is an algebraic, hence continuous dcpo, hence a domain-complete
  space.  $I$ is its top element.  We claim that every collection of
  open neighborhoods of $I$ whose intersection is $\{I\}$ must be
  uncountable.  Imagine we had a countable collection
  ${(V_n)}_{n \in \nat}$ of open neighborhoods of $I$ whose
  intersection is $\{I\}$.  For each $n \in \nat$, $I$ is in some
  basic open set
  $\upc A_n \eqdef \{B \in \pow (I) \mid A_n \subseteq B\}$ (where
  each $A_n$ is a finite subset of $I$) included in $V_n$.  Then
  $\bigcap_{n \in \nat} \upc A_n$ is still equal to $\{I\}$.  However,
  $\bigcap_{n \in \nat} \upc A_n = \upc A_\infty$, where $A_\infty$ is
  the countable set $\bigcup_{n \in \nat} A_n$, and must contain some
  (uncountably many) points other than $I$.
\end{remark}
% For example,
% $\pow (I)$ with the Scott topology of inclusion is always a continuous
% (even algebraic) dcpo, hence is domain-complete.  But $\pow (I)$ is
% not first-countable if $I$ is uncountable
% \cite[Exercise~6.5.9]{JGL-topology}, hence not countably-based, hence
% not quasi-Polish in that case.

\section{Topologically complete spaces}
\label{sec:vcech-compl-spac}

In 1937, Eduard \v{C}ech defined \emph{topologically complete} spaces
as those topological spaces that are $G_\delta$ subsets of some
compact Hausdorff space, or equivalently those completely regular
Hausdorff spaces that are $G_\delta$ subsets of their Stone-\v{C}ech
compactification \cite{Cech:bicomplete}, and proved that a metrizable
space is completely metrizable if and only if it is topologically
complete.

The following is then clear:
\begin{fact}
  \label{fact:top:complete}
  Every topologically complete space in \v{C}ech's sense is LCS-complete.
\end{fact}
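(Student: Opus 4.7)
The plan is essentially to unwind the definitions and observe that the class of compact Hausdorff spaces sits inside the class of locally compact sober spaces. Concretely, let $X$ be topologically complete in \v{C}ech's sense, so that by definition $X$ is (homeomorphic to) a $G_\delta$ subset of some compact Hausdorff space $Y$.

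First I would note that $Y$, being compact, is in particular locally compact: every point has the whole space $Y$ as a compact neighborhood, and more usefully, in a compact Hausdorff space every point has a neighborhood base of compact (closed) sets, which are automatically saturated since the specialization preorder is trivial. Second, since $Y$ is Hausdorff it is $T_2$, and every $T_2$ space is sober (irreducible closed sets in a Hausdorff space are singletons). Hence $Y$ is a locally compact sober space.

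Therefore the $G_\delta$ inclusion $X \hookrightarrow Y$ exhibits $X$ as a $G_\delta$ subset of a locally compact sober space, which is exactly the definition of LCS-complete given in Definition~\ref{defn:ccqm}. There is no real obstacle here; the only subtle point worth mentioning is that one should invoke the ``$G_\delta$ subset of a compact Hausdorff space'' form of \v{C}ech's definition rather than the ``$G_\delta$ in the Stone-\v{C}ech compactification'' form, since the former applies to all topologically complete spaces without any detour through complete regularity.
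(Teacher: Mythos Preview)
Your proof is correct and follows exactly the reasoning the paper intends: the paper states this fact as ``clear'' without further argument, since a compact Hausdorff space is trivially locally compact and sober, so a $G_\delta$ subset of one is LCS-complete by definition.
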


% Zden\v{e}k Frol\'\i k \cite{Frolik:Cech:complete} then showed that,
% for completely regular $T_1$ (i.e., $T_{3\;1/2}$) spaces, topological
% completeness is equivalent to the following property, called
% \emph{\v{C}ech-completeness} in \cite[Exercise~7.6.21]{JGL-topology}:
% there is a sequence $\mathcal C_*$ of open covers
% $\mathcal C_n \eqdef {(U_{ni})}_{i \in I_n}$, $n \in \nat$, such that
% every filtered family ${(F_j)}_{j \in J}$ of non-empty closed subsets
% controlled by $\mathcal C_*$ has non-empty intersection;
% ${(F_j)}_{j \in J}$ is \emph{controlled} by $\mathcal C_*$ if and only
% if for every $n \in \nat$, there is a $j \in J$ such that $F_j$ is
% included in some open set $U_{ni}$, $i \in I_n$, from the $n$th cover
% $\mathcal C_n$.

% \ForAuthors{JGL: generalize Fact~\ref{fact:top:complete}, for
%   \v{C}ech-complete spaces, maybe just regular, not even $T_1$?  are
%   they domain-complete?}

\section{Sobriety}
\label{sec:sobriety}

A $\bPi^0_2$ subset of a topological space $Y$ is a space of the form
$\{y \in Y \mid \forall n \in \nat, y \in U_n \limp y \in V_n\}$,
where $U_n$ and $V_n$ are open in $Y$.  Every $G_\delta$ subset of $Y$
is $\bPi^0_2$ (take $U_n=Y$ for every $n$), and every closed subset of
$Y$ is $\bPi^0_2$ (take $U_n$ equal to the complement of that closed
subset for every $n$, and $V_n$ empty).  More generally, we consider
\emph{Horn} subsets of $Y$, defined as sets of the form
$\{y \in Y \mid \forall i \in I, y \in U_i \limp y \in V_i\}$, where
$U_i$, $V_i$ are (not necessarily countably many) open subsets of $Y$.
\begin{proposition}
  \label{prop:ccqm:sober}
  Every Horn subset $X$ of a sober space $Y$ is sober.  In particular,
  every LCS-complete space is sober.
\end{proposition}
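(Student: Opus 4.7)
The plan is to fix an irreducible closed subset $C$ of $X$ and produce a unique $y_0 \in X$ with $C = \overline{\{y_0\}}^X$. First I would set $F \eqdef \overline{C}^Y$ and check that $F$ is irreducible in $Y$: if $F \subseteq F_1 \cup F_2$ with $F_1, F_2$ closed in $Y$, intersecting with $X$ yields $C \subseteq (F_1 \cap X) \cup (F_2 \cap X)$, so irreducibility of $C$ in $X$ gives $C \subseteq F_j$ for some $j$, whence $F = \overline{C}^Y \subseteq F_j$. Sobriety of $Y$ then produces a unique $y_0 \in Y$ with $F = \overline{\{y_0\}}^Y$, and the problem reduces to showing that this generic point lies in $X$, because then $\overline{\{y_0\}}^X = \overline{\{y_0\}}^Y \cap X = F \cap X = C$.

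The heart of the argument is the step $y_0 \in X$, and this is where I expect the main difficulty. Suppose otherwise; then the Horn description of $X$ supplies an index $i \in I$ with $y_0 \in U_i$ and $y_0 \notin V_i$. The closed set $F \cap (Y \setminus V_i)$ contains $y_0$, so it contains $\overline{\{y_0\}}^Y = F$; equivalently $F \cap V_i = \emptyset$. On the other hand $F \cap U_i$ is a nonempty relatively open subset of $F$ (it contains $y_0$), and since $C$ is dense in $F$ by construction, I can pick some $x \in C \cap U_i$. But $x \in C \subseteq X$, so the Horn implication at index $i$ forces $x \in V_i$, contradicting $F \cap V_i = \emptyset$. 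Uniqueness of $y_0$ inside $X$ is then inherited from uniqueness inside $Y$: any other generator $y_0' \in X$ of $C$ would satisfy $\overline{\{y_0'\}}^Y = \overline{C}^Y = F$ and hence equal $y_0$ by sobriety of $Y$.

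For the ``in particular'' clause, I would observe that every $G_\delta$ subset $\bigcap_{n \in \nat} W_n$ of $Y$ is automatically Horn, by taking $U_n \eqdef Y$ and $V_n \eqdef W_n$; hence every LCS-complete space sits as a Horn subset of a locally compact sober space and is sober by the first part. The conceptual obstacle to keep in mind is the interplay between sobriety on $Y$ (which forces $F$ to avoid $V_i$ once its generic point $y_0$ does) and density of $C$ in $F$ (which forces $C$ to meet $U_i$); the Horn implication $U_i \cap X \subseteq V_i \cap X$ is exactly what welds these two facts together into a contradiction.
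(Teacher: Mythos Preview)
Your argument is correct. You run a direct point-set verification: take an irreducible closed $C\subseteq X$, push forward to its closure $F$ in $Y$, use sobriety of $Y$ to obtain a generic point $y_0$, and then use the Horn clauses together with density of $C$ in $F$ to force $y_0\in X$. Every step checks out, including the key implication $y_0\notin V_i \Rightarrow F\cap V_i=\emptyset$ (since $F=\dc y_0$ in the specialization order) and the recovery $F\cap X=C$ from $C$ being closed in $X$.

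The paper takes a genuinely different route. It observes that the Horn subset $X$ is the \emph{equalizer} in $\Topcat$ of two continuous maps $f,g\colon Y\to\pow(I)$, where $f(y)=\{i\mid y\in U_i\}$ and $g(y)=\{i\mid y\in U_i\cap V_i\}$, and then invokes the lemma that any equalizer of continuous maps from a sober space into a $T_0$ space is sober. Your approach is more elementary and self-contained: it requires no auxiliary target space and no prior result about equalizers, and it makes transparent exactly how the implication $U_i\cap X\subseteq V_i\cap X$ pins the generic point inside $X$. The paper's approach is more conceptual and reusable: once one recognizes Horn subsets as equalizers, sobriety (and other limit-stable properties) comes for free. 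Both arguments are short; yours trades a cited lemma for an explicit density-plus-specialization computation.
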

\begin{proof}
  We prove the first part.  In the case of $\bPi^0_2$ subsets, that
  was already proved in \cite[Lemma~4.2]{deBrecht:Hurewicz}.  Let
  $X \eqdef \{y \in Y \mid \forall i \in I, y \in U_i \limp y \in
  V_i\}$, with $U_i$ and $V_i$ open.  $\pow (I)$, with the inclusion
  ordering, is an algebraic dcpo, whose finite elements are the finite
  subsets of $I$.  Let $f \colon Y \to \pow (I)$ map $y$ to
  $\{i \in I \mid y \in U_i\}$, and $g$ map $y$ to
  $\{i \in I \mid y \in U_i \cap V_i\}$.  Both are continuous, since
  $f^{-1} (\upc \{i_1, \cdots, i_k\}) = \bigcap_{j=1}^k U_{i_j}$ and
  $g^{-1} (\upc \{i_1, \cdots, i_k\}) = \bigcap_{j=1}^k U_{i_j} \cap
  V_{i_j}$ are open.  The equalizer of $f$ and $g$ is
  $\{y \in Y \mid f (y)=g (y)\} = \{y \in Y \mid \forall i \in I, y
  \in U_i \liff y \in U_i \cap V_i\} = X$, with the subspace topology.
  But every equalizer of continuous maps from a sober space to a $T_0$
  topological space is sober \cite[Lemma~8.4.12]{JGL-topology} (note
  that ``$T_0$'' is missing from the statement of that lemma, but
  $T_0$-ness is required).
\end{proof}

\section{The Wilker condition}
\label{sec:wilker-condition}

A space $X$ satisfies \emph{Wilker's condition}, or is \emph{Wilker},
if and only if every compact saturated set $Q$ included in the union
of two open subsets $U_1$ and $U_2$ of $X$ is included in the union of
two compact saturated sets $Q_1 \subseteq U_1$ and
$Q_2 \subseteq U_2$.  The notion is used by Keimel and Lawson
\cite[Theorem~6.5]{KL:measureext}, and is due to Wilker
\cite[Theorem~3]{Wilker:hom}.  Theorem~8 of the latter states that
every KT$_4$ space, namely every space in which every compact subspace
is $T_4$, is Wilker.  In particular, every Hausdorff space is Wilker.

We will need the following lemma several times in this paper.  The
proof of Theorem~\ref{thm:wilker} is typical of several arguments in
this paper.

\begin{lemma}
  \label{lemma:Gdelta:subspace}
  Let $X$ be a subspace of a topological space $Y$.  For every subset
  $E$ of $X$,
  \begin{enumerate}
  \item $E$ is compact in $X$ if and only if $E$ is compact in $Y$;
  \item if $X$ is upwards-closed in $Y$, then $E$ is saturated in $X$
    if and only if $E$ is saturated in $Y$;
  \item if $X$ is a $G_\delta$ subset of $Y$, then $E$ is $G_\delta$
    in $X$ if and only if $E$ is $G_\delta$ in $Y$.
  \end{enumerate}
\end{lemma}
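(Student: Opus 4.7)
The plan is to prove each clause by directly unfolding definitions; no deep ingredient is needed, only careful bookkeeping of what ``open'', ``upwards-closed'', and ``countable intersection of opens'' mean relative to the ambient space.

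For (1), I will use that the open subsets of $X$ are exactly the traces $U \cap X$ with $U$ open in $Y$.  If $E$ is compact in $X$ and $E \subseteq \bigcup_i U_i$ with each $U_i$ open in $Y$, then $\{U_i \cap X\}_i$ is an open cover of $E$ in $X$, from which a finite subcover can be extracted, and each piece $U_{i_j} \cap X$ is contained in $U_{i_j}$, giving a finite subcover in $Y$.  The converse goes symmetrically: every open cover of $E$ in $X$ comes from an open cover in $Y$, and the finite subcover one extracts in $Y$ intersects back down to a cover of $E \subseteq X$.

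For (2), I will use that the specialization preorder on $X$ is the restriction of the specialization preorder on $Y$, so ``saturated'' in either space just means ``upwards-closed'' with respect to that common preorder, but relativized to the respective ambient set.  If $E$ is saturated in $Y$, then a fortiori it is upwards-closed inside $X$.  Conversely, if $E$ is saturated in $X$, take any $y \in Y$ with $y \geq x$ for some $x \in E$; the assumption that $X$ is upwards-closed in $Y$ forces $y \in X$, whence $y \in E$ by saturation of $E$ in $X$.  The upwards-closedness of $X$ in $Y$ is exactly what prevents the forward direction from failing.

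For (3), the ``only if'' direction is where the $G_\delta$ hypothesis on $X$ is used.  If $E = \bigcap_{n \in \nat} V_n$ with each $V_n$ open in $X$, write $V_n = W_n \cap X$ with $W_n$ open in $Y$, and $X = \bigcap_{m \in \nat} U_m$ with $U_m$ open in $Y$; then $E = (\bigcap_n W_n) \cap (\bigcap_m U_m)$ is a countable intersection of opens of $Y$.  The ``if'' direction does not even require $X$ to be $G_\delta$: if $E = \bigcap_n W_n$ with $W_n$ open in $Y$, then because $E \subseteq X$ we have $E = \bigcap_n (W_n \cap X)$, a $G_\delta$ in $X$.

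There is no real obstacle; the only thing to be attentive to is that upwards-closedness in (2) and the $G_\delta$ hypothesis in (3) are each used in exactly one direction, and the proof should make those uses visible.
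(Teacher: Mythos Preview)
Your proposal is correct and matches the paper's own argument essentially line for line: each of the three clauses is proved by the same direct unfolding of definitions, using the trace description of open sets for (1), the restriction of the specialization preorder for (2), and writing $X$ itself as a countable intersection of opens for the nontrivial direction of (3). Your identification of which hypothesis is used in which direction of (2) and (3) is also exactly what the paper records.
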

\begin{proof}
  1. Assume $E$ compact in $X$.  For every open cover
  ${(\widehat U_i)}_{i \in I}$ of $E$ by open subsets of $Y$,
  ${(\widehat U_i \cap X)}_{i \in I}$ is an open cover of $E$ by open
  subsets of $X$.  Hence $E$ has a subcover
  ${(\widehat U_i \cap X)}_{i \in J}$, with $J$ finite, and
  ${(\widehat U_i)}_{i \in J}$ is a finite subcover of the original
  cover of $E$, showing that $E$ is compact in $Y$.

  Conversely, if $E$ is compact in $Y$ (and included in $X$), we
  consider an open cover ${(U_i)}_{i \in I}$ of $E$ by open subsets of
  $X$.  For each $i \in I$, we can write $U_i$ as $\widehat U_i \cap X$
  for some open subset $\widehat U_i$ of $Y$.  Then
  ${(\widehat U_i)}_{i \in I}$ is an open cover of $E$ in $X$.  We
  extract a subcover ${(\widehat U_i)}_{i \in J}$ with $J$ finite.
  Since $E$ is included in $X$, $E$ is included in
  $X \cap \bigcup_{i \in J} \widehat U_i = \bigcup_{i \in J} U_i$.  This
  shows that $E$ is compact in $X$.

  2 follows from the fact that the specialization preordering on $X$
  is the restriction of the specialization preordering on $Y$ to $X$.
  If $E$ is saturated in $X$, then for every $x \in E$ and every $y$
  above $x$ in $Y$, then $y$ is in $X$ since $X$ is saturated, and
  then in $E$ by assumption.  Therefore $E$ is saturated in $Y$.
  Conversely, if $E$ is saturated in $Y$, then for every $x \in E$ and
  every $y$ above $x$ in $X$, $y$ is also above $x$ in $Y$, hence in
  $E$ since $E$ is saturated in $Y$.  This shows that $E$ is saturated
  in $X$.

  3. Since $X$ is $G_\delta$ in $Y$, $X$ is equal to $\bigcap_{n \in
    \nat} W_n$ where each $W_n$ is open in $Y$.

  If $E$ is a $G_\delta$ subset of $X$, say
  $E \eqdef \bigcap_{m \in \nat} U_m$, where each $U_m$ is open in
  $X$, we write $U_m$ as $\widehat U_m \cap X$ for some open subset
  $\widehat U_m$ of $Y$.  It follows that $E$ is equal to
  $\bigcap_{m, n \in \nat} (\widehat U_m \cap W_n)$.  This is a
  countable intersection of open subsets of $Y$, hence a $G_\delta$
  subset.

  Conversely, if $E$ is a $G_\delta$ subset of $Y$, say
  $E \eqdef \bigcap_{m \in \nat} \widehat U_m$, then since $E$ is
  included in $X$, $E$ is also equal to
  $\bigcap_{m \in \nat} (\widehat U_m \cap X)$, showing that $E$ is
  $G_\delta$ in $X$.
\end{proof}

\begin{theorem}
  \label{thm:wilker}
  Every LCS-complete space is Wilker.
\end{theorem}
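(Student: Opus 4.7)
My plan is to transfer Wilker's condition from the ambient locally compact sober space $Y$ down to $X$. Write $X = \bigcap_{n \in \nat} W_n$ as a descending intersection of open subsets of $Y$, take a compact saturated $Q \subseteq X$ with $Q \subseteq U_1 \cup U_2$, and lift each $U_i$ to an open set $\widehat U_i$ in $Y$ with $U_i = \widehat U_i \cap X$. By Lemma~\ref{lemma:Gdelta:subspace}(1), $Q$ is also compact in $Y$, and $Q$ lies in $\widehat U_1 \cup \widehat U_2$ as well as in every $W_n$. The strategy is to build the desired $Q_i$'s as filtered intersections in $Y$ of decreasing sequences of compact saturated sets that trap $Q$ while squeezing into $X$, using local compactness of $Y$ at each finite stage and well-filteredness of $Y$ (a consequence of sobriety) at the limit.

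The core of the argument is an induction on $n$ producing compact saturated subsets $L_1^{(n)}, L_2^{(n)}$ of $Y$ satisfying (i) $L_i^{(n)} \subseteq \widehat U_i \cap W_n$, (ii) $L_i^{(n+1)} \subseteq L_i^{(n)}$, and, crucially, (iii) $Q \subseteq \mathrm{int}_Y(L_1^{(n)}) \cup \mathrm{int}_Y(L_2^{(n)})$. The base case is nothing but the usual proof that a locally compact space is Wilker, carried out inside $\widehat U_i \cap W_0$: for each $x \in Q$ pick $i(x) \in \{1,2\}$ with $x \in \widehat U_{i(x)}$, use local compactness of $Y$ to find a compact saturated neighborhood $K_x \subseteq \widehat U_{i(x)} \cap W_0$ of $x$, extract a finite subcover of $Q$ by the interiors $\mathrm{int}_Y K_{x_j}$, and form $L_i^{(0)}$ as the union of the $K_{x_j}$'s with $i(x_j) = i$. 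The inductive step uses the same recipe, but the ambient open set at $x$ is now $\mathrm{int}_Y(L_j^{(n)}) \cap W_{n+1}$, where $j$ is chosen so that $x \in \mathrm{int}_Y(L_j^{(n)})$ via (iii); this is still open and contains $x$, so local compactness again supplies a $K_x$ with $x$ in its interior and $K_x \subseteq L_j^{(n)} \cap W_{n+1}$, and a finite subcover yields $L_i^{(n+1)}$ satisfying (i)--(iii) at stage $n+1$.

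Setting $Q_i := \bigcap_n L_i^{(n)}$, well-filteredness of $Y$ makes this filtered intersection of compact saturated sets compact saturated in $Y$; by (i), $Q_i \subseteq \bigcap_n W_n = X$, and Lemma~\ref{lemma:Gdelta:subspace}(1) hands us compactness of $Q_i$ in $X$. Saturation of $Q_i$ in $X$ is immediate from the fact that the specialization preorder of $X$ restricts that of $Y$, and $Q_i \subseteq \widehat U_i \cap X = U_i$ by (i). For the cover $Q \subseteq Q_1 \cup Q_2$: if $x \in Q \setminus Q_1$ then some $L_1^{(n_0)}$ misses $x$, hence by (ii) so does every later $L_1^{(n)}$; (iii) then places $x$ in $L_2^{(n)}$ for all $n \geq n_0$, and (ii) propagates this to smaller $n$ as well, giving $x \in Q_2$. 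The main obstacle is arranging (ii) and the covering property simultaneously: if one only maintains $Q \subseteq L_1^{(n)} \cup L_2^{(n)}$, the inductive step collapses because $L_i^{(n)}$ itself is not open, so local compactness of $Y$ cannot be invoked inside it. The strengthening to interiors in (iii) is the cheap but essential trick that both keeps the induction alive and permits the filtered limit in $Y$ to drop into $X$ while still covering $Q$.
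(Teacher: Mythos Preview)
Your proof is correct and follows essentially the same route as the paper: build descending sequences $L_i^{(n)}$ of compact saturated subsets of $Y$ satisfying the interior-covering condition $Q \subseteq \mathrm{int}_Y(L_1^{(n)}) \cup \mathrm{int}_Y(L_2^{(n)})$ and $L_i^{(n)} \subseteq \widehat U_i \cap W_n$, then pass to the filtered intersection using well-filteredness of $Y$. The only cosmetic difference is that the paper first isolates the strengthened Wilker property $(*)$ for locally compact spaces (your base case) as a standalone statement and then simply iterates $(*)$ with the open pair $(\mathrm{int}_Y(L_1^{(n)}) \cap W_{n+1},\, \mathrm{int}_Y(L_2^{(n)}) \cap W_{n+1})$, whereas you perform the ``cover by compact neighborhoods and extract a finite subcover'' step inline at each stage; the resulting sequences and the final covering argument are the same.
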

\begin{proof}
  We start by showing that every locally compact space $Y$ is Wilker,
  and in fact satisfies the following stronger property: $(*)$ for
  every compact saturated subset $Q$ of $Y$, for all open subsets
  $U_1$ and $U_2$ such that $Q \subseteq U_1 \cup U_2$, there are two
  compact saturated subsets $Q_1$ and $Q_2$ such that
  $Q \subseteq \interior {Q_1} \cup \interior {Q_2}$,
  $Q_1 \subseteq U_1$, and $Q_2 \subseteq U_2$.  For each $x \in Q$,
  if $x$ is in $U_1$, then we pick a compact saturated neighborhood
  $Q_x$ of $x$ included in $U_1$, and if $x$ is in $U_2 \diff U_1$,
  then we pick a compact saturated neighborhood $Q'_x$ of $x$ included
  in $U_2$.  From the open cover of $Q$ consisting of the sets
  $\interior {Q_x}$ and $\interior {Q'_x}$, we extract a finite cover.
  Namely, there are a finite set $E_1$ of points of $Q \cap U_1$ and a
  finite set $E_2$ of points of $Q \diff U_1$ (hence in $Q \cap U_2$)
  such that
  $Q \subseteq \bigcup_{x \in E_1} \interior {Q_x} \cup \bigcup_{x \in
    E_2} \interior {Q'_x}$.  We let
  $Q_1 \eqdef \bigcup_{x \in E_1} Q_x$,
  $Q_2 \eqdef \bigcup_{x \in E_2} Q'_x$.
  
  Let $X$ be (homeomorphic to) the intersection
  $\fcap_{n \in \nat} W_n$ of a descending sequence of open subsets of
  a locally compact sober space $Y$.  Let $Q$ be compact saturated in
  $X$, and included in the union of two open subsets $U_1$ and $U_2$
  of $X$.  Let us write $U_1$ as $\widehat U_1 \cap X$ where
  $\widehat U_1$ is open in $Y$, and similarly $U_2$ as
  $\widehat U_2 \cap X$.  By Lemma~\ref{lemma:Gdelta:subspace}, $Q$ is
  compact saturated in $Y$.  By property $(*)$, there are two compact
  saturated subsets $Q_{01}$ and $Q_{02}$ of $Y$ such that
  $Q \subseteq \interior {Q_{01}} \cup \interior {Q_{02}}$,
  $Q_{01} \subseteq \widehat U_1 \cap W_0$,
  $Q_{02} \subseteq \widehat U_2 \cap W_0$.  By $(*)$ again, there are
  two compact saturated subsets $Q_{11}$ and $Q_{12}$ of $Y$ such that
  $Q \subseteq \interior {Q_{11}} \cup \interior {Q_{12}}$,
  $Q_{11} \subseteq \interior {Q_{01}} \cap W_1$,
  $Q_{12} \subseteq \interior {Q_{02}} \cap W_1$.  Continuing this
  way, we obtain two compact saturated subsets $Q_{n1}$ and $Q_{n2}$
  for each $n \in \nat$ such that
  $Q \subseteq \interior {Q_{n1}} \cup \interior {Q_{n2}}$,
  $Q_{(n+1)1} \subseteq \interior {Q_{n1}} \cap W_{n+1}$, and
  $Q_{(n+1)2} \subseteq \interior {Q_{n2}} \cap W_{n+1}$.  Let
  $Q_1 \eqdef \fcap_{n \in \nat} Q_{n1} = \fcap_{n \in \nat} \interior
  {Q_{n1}}$.  This is a filtered intersection of compact saturated
  sets in a well-filtered space, hence is compact saturated.  Since
  $Q_{(n+1)1} \subseteq W_{n+1}$ for every $n \in \nat$, $Q_1$ is
  included in $X$, hence is compact saturated in $X$ by
  Lemma~\ref{lemma:Gdelta:subspace}.  Similarly,
  $Q_2 \eqdef \fcap_{n \in \nat} Q_{n2} = \fcap_{n \in \nat} \interior
  {Q_{n2}}$ is compact saturated in $X$.

  We note that $Q$ is included in $Q_1 \cup Q_2$.  Otherwise, there
  would be a point $x$ in $Q$ and outside both $Q_1$ and $Q_2$, hence
  outside $\interior {Q_{m1}}$ for some $m \in \nat$ and outside
  $\interior {Q_{n1}}$ for some $n \in \nat$, hence outside
  $\interior {Q_{k1}} \cup \interior {Q_{k2}}$ with
  $k \eqdef \max (m, n)$.  That is impossible since
  $Q \subseteq \interior {Q_{k1}} \cup \interior {Q_{k2}}$.

  Finally, $Q_1$ is included in $U_1$ because
  $Q_1 \subseteq Q_{01} \cap X \subseteq \widehat U_1 \cap W_0 \cap X
  = U_1$, and similarly $Q_2$ is included in $U_2$.
\end{proof}

\begin{remark}
  \label{rem:wilker:compactGdelta}
  The proof of Theorem~\ref{thm:wilker} shows that $Q_1$ and $Q_2$ are
  even compact $G_\delta$ subsets of $X$, being obtained as
  $\fcap_{n \in \nat} \interior {Q_{n1}}$, hence also as
  $\fcap_{n \in \nat} \interior {Q_{n1}} \cap X$ (resp.,
  $\fcap_{n \in \nat} \interior {Q_{n2}} \cap X$).  This suggests that
  there are many compact $G_\delta$ sets in every LCS-complete space.
  Note that not all compact saturated sets are $G_\delta$ in general
  LCS-complete spaces: even the upward closures $\upc x$ of single
  points may fail to be $G_\delta$, as Remark~\ref{rem:powI}
  demonstrates.
\end{remark}

\begin{remark}
  \label{rem:ccqm:CS}
  Pursuing Remark~\ref{rem:ccqm:pattern}, every SC-complete space $X$
  is not only LCS-complete, but also \emph{coherent}: the intersection
  of two compact saturated sets $Q_1$, $Q_2$ is compact.  Indeed, let
  $X$ be $G_\delta$ in some stably compact space $Y$; by
  Lemma~\ref{lemma:Gdelta:subspace}, items~1 and~2, $Q_1$ and $Q_2$
  are compact saturated in $Y$, then $Q_1 \cap Q_2$ is compact
  saturated in $Y$ and included in $X$, hence compact in $X$.  This
  implies that there are LCS-complete, and even domain-complete
  spaces, that are not SC-complete: take any non-coherent dcpo, for
  example $\Z^- \cup \{a, b\}$, where $\Z^-$ is the set of negative
  integers with the usual ordering, and $a$ and $b$ are incomparable
  and below $\Z^-$.
\end{remark}

\section{Choquet completeness}
\label{sec:choq-compl-baire}

The \emph{strong Choquet game} on a topological space $X$ is defined
as follows.  There are two players, $\alpha$ and $\beta$.  Player
$\beta$ starts, by picking a point $x_0$ and an open neighborhood
$V_0$ of $x_0$.  Then $\alpha$ must produce a smaller open
neighborhood $U_0$ of $x_0$, i.e., one such that $U_0 \subseteq
V_0$. Player $\beta$ must then produce a new point $x_1$ in $U_0$, and
a new open neighborhood $V_1$ of $x_1$, included in $U_0$, and so on.
An \emph{$\alpha$-history} is a sequence
$x_0, V_0, U_0, x_1, V_1, U_1, x_2, V_2, \cdots, x_n, V_n$ where
$V_0 \supseteq U_0 \supseteq V_1 \supseteq U_1 \supseteq V_2 \supseteq
\ldots \supseteq V_n$ is a decreasing sequence of opens and
$x_0 \in U_0$, $x_1 \in U_1$, $x_2 \in U_2$, \ldots,
$x_{n-1} \in U_{n-1}$, $x_n \in V_n$, $n \in \nat$.  A \emph{strategy}
for $\alpha$ is a map $\sigma$ from $\alpha$-histories to open subsets
$U_n$ with $x_n \in U_n \subseteq V_n$, and defines how $\alpha$ plays
in reaction to $\beta$'s moves.  (For details, see Section~7.6.1 of
\cite{JGL-topology}.)

$X$ is \emph{Choquet-complete} if and only if
$\alpha$ has a winning strategy, meaning that whatever $\beta$ plays,
$\alpha$ has a way of playing such that $\bigcap_{n \in \nat} U_n$
($=\bigcap_{n \in \nat} V_n$) is non-empty.  $X$ is \emph{convergence
  Choquet-complete} if and only if $\alpha$ can always win in such a
way that ${(U_n)}_{n \in \nat}$ is a base of open neighborhoods of
some point.  The latter notion is due to Dorais and Mummert
\cite{DM:choquet}.  We introduce yet another, related notion: a space
is \emph{compactly Choquet-complete} if and only if $\alpha$ can
always win in such that way that ${(U_n)}_{n \in \nat}$ is a base of
open neighborhoods of some non-empty compact saturated set.
We do not assume the strategies to be stationary, that is, the players
have access to all the points $x_n$ and all the open sets $U_n$, $V_n$
played earlier.

The following generalizes \cite[Theorem~4.3]{GLN-lmcs17}, which states
that every continuous complete quasi-metric space is convergence
Choquet-complete in its $d$-Scott topology.
\begin{proposition}
  \label{prop:LCS:Choquet}
  Every domain-complete space is convergence Choquet-complete.  Every
  LCS-complete space is compactly Choquet-complete.
\end{proposition}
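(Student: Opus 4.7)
The plan is to play the strong Choquet game from within a witnessing $G_\delta$ presentation $X = \fcap_{n \in \nat} W_n$ of $X$ inside an ambient $Y$ (locally compact sober, respectively a continuous dcpo), and use the ambient structure at each step to construct a decreasing sequence of ``good'' neighborhoods whose limit lies in $X$.

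For the LCS-complete case, I would proceed as follows. When $\beta$ plays $x_0 \in V_0$, write $V_0 = \widehat V_0 \cap X$ with $\widehat V_0$ open in $Y$. Since $x_0 \in \widehat V_0 \cap W_0$ and $Y$ is locally compact, pick a compact saturated $Q_0 \subseteq Y$ with $x_0 \in \interior{Q_0}$ and $Q_0 \subseteq \widehat V_0 \cap W_0$, and let $\alpha$ respond with $U_0 := \interior{Q_0} \cap X$. Inductively, after $\beta$ plays $x_{n+1} \in V_{n+1} \subseteq U_n$, choose a compact saturated $Q_{n+1} \subseteq Y$ with $x_{n+1} \in \interior{Q_{n+1}}$ and $Q_{n+1} \subseteq \widehat V_{n+1} \cap \interior{Q_n} \cap W_{n+1}$, and let $\alpha$ play $U_{n+1} := \interior{Q_{n+1}} \cap X$. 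Then set $Q := \fcap_{n \in \nat} Q_n$. The filtered family ${(Q_n)}_{n \in \nat}$ of non-empty compact saturated sets in the sober (hence well-filtered) space $Y$ has a non-empty compact saturated intersection, and since $Q_n \subseteq W_n$ for each $n$ we have $Q \subseteq X$; by Lemma~\ref{lemma:Gdelta:subspace}, $Q$ is compact saturated in $X$. Finally, for any open $U = \widehat U \cap X$ of $X$ containing $Q$, well-filteredness in $Y$ gives some $Q_n \subseteq \widehat U$, whence $U_n \subseteq U$; this shows ${(U_n)}_{n \in \nat}$ is a base of open neighborhoods of $Q$.

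For the domain-complete case, the same skeleton applies, but we exploit continuity of the dcpo $Y$ instead of local compactness, and aim at a single point rather than a compact set. When $\beta$ plays $x_0 \in V_0 = \widehat V_0 \cap X$, the Scott-open set $\widehat V_0 \cap W_0$ contains $x_0 = \dsup_{y \ll x_0} y$, so some $y_0 \ll x_0$ lies in $\widehat V_0 \cap W_0$; since Scott opens are upward closed, $\uuarrow y_0 \subseteq \widehat V_0 \cap W_0$, and $\alpha$ replies with $U_0 := \uuarrow y_0 \cap X$. Inductively, given $x_{n+1} \in V_{n+1} \subseteq \uuarrow y_n \cap X$, the Scott-open set $\widehat V_{n+1} \cap W_{n+1} \cap \uuarrow y_n$ contains $x_{n+1}$, so interpolation yields $y_{n+1}$ with $y_n \ll y_{n+1} \ll x_{n+1}$ and $y_{n+1} \in \widehat V_{n+1} \cap W_{n+1}$; $\alpha$ plays $U_{n+1} := \uuarrow y_{n+1} \cap X$. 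Let $y := \dsup_{n \in \nat} y_n$, which exists in $Y$; since each $W_n$ is upward closed and $y_n \in W_n$, $y \in W_n$ for all $n$, so $y \in X$. Moreover $y_n \ll y_{n+1} \leq y$, so $y \in \uuarrow y_n = U_n$ in $X$. For any open $U = \widehat U \cap X \ni y$, Scott-openness of $\widehat U$ gives some $y_n \in \widehat U$, whence $\uuarrow y_n \subseteq \widehat U$ and $U_n \subseteq U$, so ${(U_n)}_{n \in \nat}$ is a base of open neighborhoods of $y$.

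The non-trivial point in both constructions is ensuring that the limit object (the compact set $Q$, respectively the point $y$) actually sits inside $X$ and is not lost; this is arranged by forcing each constructed set to lie inside the next $W_n$, so that passing to the (filtered) intersection or directed supremum automatically lands inside $\fcap_n W_n = X$. The only genuine obstacle is the non-emptiness of $Q$ in the LCS-complete case, which is exactly where we need well-filteredness of $Y$ (supplied by sobriety, via Section~\ref{sec:preliminaries}); everything else is a straightforward bookkeeping of the $G_\delta$ constraints through the game.
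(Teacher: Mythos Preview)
Your proposal is correct and follows essentially the same approach as the paper: in both cases you embed the game in the ambient space $Y$, at step $n$ force the chosen neighborhood (either $\uuarrow y_n$ or $Q_n$) to lie inside $W_n$ and inside the previous stage, and then use directed suprema (resp.\ well-filteredness) to produce the limiting point (resp.\ compact set) in $X = \fcap_n W_n$. The only minor omission is that in the LCS case you verify that every open neighborhood of $Q$ contains some $U_n$ but do not explicitly note that each $U_n$ itself contains $Q$ (which follows since $Q \subseteq Q_{n+1} \subseteq \interior{Q_n}$ and $Q \subseteq X$); this is trivial to add.
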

\begin{proof}
  Let $X$ be the intersection of a descending sequence
  ${(W_n)}_{n \in \nat}$ of open subsets of $Y$.  Given any open
  subset $U$ of $X$, we write $\widehat U$ for some open subset of $Y$
  such that $\widehat U \cap X = U$ (for example, the largest one).
  % the union $\widehat U$ of all the open subsets of
  % $Y$ whose intersection with $X$ is included in $U$ is the largest
  % open subset of $Y$ such that $\widehat U \cap X = U$.
  % For convenience, we let $U_{-1} \eqdef Y$.

  We first assume that $Y$ is a continuous dcpo.  The proof is a
  variant of \cite[Exercise~7.6.6]{JGL-topology}.  We define
  $\alpha$'s winning strategy so that $U_n$ is of the form
  $\uuarrow y_n \cap X$ for some $y_n \in Y$.  Given the last pair
  $(x_n, V_n)$ played by $\beta$, $x_n$ is the supremum of a directed
  family of elements way-below $x_n$.  One of them will be in
  $\widehat V_n \cap W_n$, and also in $\uuarrow y_{n-1}$ if
  $n \geq 1$, because $x_n \in V_n \subseteq \widehat V_n$,
  $x_n \in X \subseteq W_n$, and (if $n \geq 1$)
  $x_n \in U_{n-1} = \uuarrow y_{n-1} \cap X \subseteq \uuarrow
  y_{n-1}$.  Pick one such element $y_n$ from
  $\widehat V_n \cap W_n \cap \uuarrow y_{n-1}$ (if $n \geq 1$,
  otherwise from $\widehat V_n \cap W_n$), and let $\alpha$ play
  $U_n \eqdef \uuarrow y_n \cap X$, as announced.  Formally, the
  strategy $\sigma$ that we are defining for $\alpha$ is
  $\sigma (x_0, V_0, U_0, x_1, V_1, U_1, x_2, V_2, \cdots, x_n, V_n)
  \eqdef \uuarrow y (x_0, V_0, U_0, x_1, V_1, U_1, x_2, V_2, \cdots,
  x_n, V_n) \cap X$, where
  $y (x_0, V_0, U_0, x_1, V_1, U_1, x_2, V_2, \cdots, x_n, V_n)$ is
  defined by induction on $n$ as a point in $\widehat V_n \cap W_n$,
  and also in
  $\uuarrow y (x_0, V_0, U_0, x_1, V_1, U_1, x_2, V_2, \cdots,
  x_{n-1}, V_{n-1})$ if $n \geq 1$.

  Given any play
  $x_0, V_0, U_0, x_1, V_1, U_1, x_2, V_2, \cdots, x_n, V_n, U_n,
  \cdots$ in the game, let $x \eqdef \sup_{n \in \nat} y_n$ (where
  $y_n \eqdef y (x_0, V_0, U_0, x_1, V_1, U_1, x_2, V_2, \cdots, x_n,
  V_n)$).  This is a directed supremum, since
  $y_0 \ll y_1 \ll \cdots \ll y_n \ll \cdots \ll x$.  Since
  $y_n \in W_n$ for every $n \in \nat$, $x$ is in
  $\bigcap_{n \in \nat} W_n = X$.  For every $n \in \nat$, we have
  $y_n \ll x$, so $x$ is in $U_n = \uuarrow y_n \cap X$.  In order to
  show that ${(U_n)}_{n \in \nat}$ is a base of open neighborhoods of
  $x$ in $X$, let $U$ be any open neighborhood of $x$ in $X$.  Since
  $x = \sup_{n \in \nat} y_n$, some $y_n$ is in $\widehat U$, so
  $U_n = \uuarrow y_n \cap X$ is included in
  $\upc y_n \cap X \subseteq \widehat U \cap X = U$.

  The argument is similar when $Y$ is a locally compact sober space
  instead.  Instead of picking a point $y_n$ in
  $\widehat V_n \cap W_n$ (and in $\uuarrow y_{n-1}$ if $n \geq 1$),
  $\alpha$ now picks a compact saturated subset $Q_n$ whose interior
  contains $x_n$, and included in $\widehat V_n \cap W_n$ (and in
  $\interior {Q_{n-1}}$ if $n \geq 1$), and defines $U_n$ as
  $\interior {Q_n} \cap X$.  This is possible because $Y$ is locally
  compact.  We let $Q \eqdef \bigcap_{n \in \nat} Q_n$.  This is a
  filtered intersection, since
  $Q_0 \supseteq Q_1 \supseteq \cdots \supseteq Q_n \supseteq \cdots$.
  
  Because $Y$ is sober hence well-filtered, $Q$ is compact saturated
  in $Y$.  It is also non-empty: if $Q = \bigcap_{n \in \nat} Q_n$
  were empty, namely, included in $\emptyset$, then $Q_n$ would be
  included in $\emptyset$ by well-filteredness, which is impossible
  since $x_n \in \interior {Q_n}$.  Also, since $Q_n \subseteq W_n$
  for every $n \in \nat$, $Q$ is included in
  $\bigcap_{n \in \nat} W_n = X$.  By
  Lemma~\ref{lemma:Gdelta:subspace}, item~3, $Q$ is a compact
  saturated subset of $X$.

  Since $Q \subseteq Q_{n+1} \subseteq \widehat V_{n+1}$ for every $n
  \in \nat$, we have $Q = Q \cap X \subseteq \widehat V_{n+1} \cap X =
  V_{n+1} \subseteq U_n$.  In order to show that ${(U_n)}_{n \in
    \nat}$ forms a base of open neighborhoods of $Q$ in $X$, let $U$
  be any open neighborhood of $Q$ in $X$.  Then $\widehat U$ contains
  $Q = \bigcap_{n \in \nat} Q_n$, so by well-filteredness some $Q_n$
  is included in $\widehat U$.  Now $U_n = \interior {Q_n} \cap X$ is
  included in $\widehat U \cap X = U$.
\end{proof}
In the case of LCS-complete spaces, notice that
$Q = \bigcap_{n \in \nat} U_n$ is not only compact, but also a
$G_\delta$ subset of $X$.  This again suggests that there are many
compact $G_\delta$ sets in every LCS-complete space, as in
Remark~\ref{rem:wilker:compactGdelta}.

The following---at last---justifies the ``complete'' part in ``LCS-complete''.
\begin{theorem}
  \label{thm:ccqm:metric}
  The metrizable LCS-complete (resp., domain-complete) spaces are the
  completely metrizable spaces.
\end{theorem}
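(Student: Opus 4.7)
The plan is to prove the two equivalences by showing, in each case, that the two directions reduce to already-established results. For the ``if'' direction of both statements, the work is essentially done: by Corollary~\ref{corl:ccqm:metric}, every completely metrizable space is domain-complete in its open-ball topology, and by Proposition~\ref{prop:ccqm:easy} every domain-complete space is LCS-complete. So the forward implications hold for both formulations.

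For the converse, note that since every domain-complete space is LCS-complete, it suffices to show that every metrizable LCS-complete space $X$ is completely metrizable; this single statement then handles both ``only if'' directions. To prove it, I would invoke Proposition~\ref{prop:LCS:Choquet}: $X$ is compactly Choquet-complete, and in particular Choquet-complete in the sense defined in Section~\ref{sec:choq-compl-baire}, since a winning strategy for $\alpha$ producing a base of open neighborhoods of a nonempty compact saturated set certainly forces $\bigcap_{n\in\nat} U_n\neq \emptyset$. The strong Choquet game as defined in the excerpt agrees with the classical game on metric spaces, so one can then appeal to Choquet's theorem (see, e.g., Kechris, \emph{Classical Descriptive Set Theory}, Theorem~8.17): a metrizable space is completely metrizable if and only if player $\alpha$ has a winning strategy in the strong Choquet game. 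Applying this to $X$ yields a complete metric compatible with its topology.

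The main obstacle I anticipate is not structural but attributional: making sure the version of the strong Choquet game given earlier in the paper matches the one used in the classical metric characterization. The definition in Section~\ref{sec:choq-compl-baire} is the usual one (non-stationary strategies, $\beta$ picks a point and an open neighborhood, $\alpha$ shrinks), so the citation should go through directly once we verify the metrizable case unambiguously.

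An alternative, more self-contained route would be to build a complete metric on $X$ directly, extracting a Cantor-style completeness argument from the descending sequence $(W_n)_{n\in\nat}$ of opens in the ambient locally compact sober space $Y$ with $X=\fcap_n W_n$. However this essentially reproves Choquet's theorem, so unless one wants a fully constructive presentation, I would stick with the appeal to the classical characterization.
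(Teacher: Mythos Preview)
Your proposal is correct and follows essentially the same route as the paper: one direction is Corollary~\ref{corl:ccqm:metric}, and for the converse you pass from LCS-complete to Choquet-complete via Proposition~\ref{prop:LCS:Choquet} and then invoke the classical fact that metrizable Choquet-complete spaces are completely metrizable. The only difference is cosmetic---the paper cites \cite[Corollary~7.6.16]{JGL-topology} for that last step where you cite Kechris, but it is the same theorem.
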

\begin{proof}
  One direction is Corollary~\ref{corl:ccqm:metric}.  Conversely, an
  LCS-complete space is Choquet-complete
  (Proposition~\ref{prop:LCS:Choquet}) and every metrizable
  Choquet-complete space is completely metrizable
  \cite[Corollary~7.6.16]{JGL-topology}.
\end{proof}

\begin{remark}
  \label{rem:nondcomplete}
  There is an LCS-complete but not domain-complete space.  The space
  $\{0, 1\}^I$, where $\{0, 1\}$ is given the discrete topology, is
  compact Hausdorff, hence trivially LCS-complete.  We claim that it
  is not domain-complete if $I$ is uncountable.  In order to show
  that, we first show that: $(*)$ for every point $\vec a$ of
  $\{0, 1\}^I$, every countable family of open neighborhoods
  ${(V_n)}_{n \in \nat}$ of $\vec a$ must be such that
  $\bigcap_{n \in \nat} V_n \neq \{\vec a\}$.  We write $a_i$ for the
  $i$th component of $\vec a$.  For each subset $J$ of $I$, let
  $V_J (\vec a) \eqdef \{\vec b \in \{0, 1\}^I \mid \forall i \in J,
  a_i=b_i\}$; this is a basic open subset of the product topology if
  $J$ is finite.  Since $\vec a \in V_n$, there is a finite subset
  $J_n$ of $I$ such that $\vec a \in V_{J_n} (\vec a) \subseteq V_n$.
  Then $\bigcap_{n \in \nat} V_n$ contains
  $\bigcap_{n \in \nat} V_{J_n} (\vec a) = V_{\bigcup_{n \in \nat}
    J_n} (\vec a)$, which contains uncountably many elements other than
  $\vec a$.  Having established $(*)$, it is clear that no point has a
  countable base of open neighborhoods.  In particular, $\{0, 1\}^I$
  is not convergence Choquet-complete, hence not domain-complete.
\end{remark}

The situation simplifies for countably-based spaces.  We will use the
notion of \emph{supercompact} set: $Q \subseteq X$ is
\emph{supercompact} if and only if for every open cover
${(U_i)}_{i \in I}$ of $Q$, there is an index $i \in I$ such that
$Q \subseteq U_i$.  By \cite[Fact~2.2]{HK:qcont}, the supercompact
subsets of a topological space $X$ are exactly the sets $\upc x$,
$x \in X$.
% Equivalently, the supercompact
% subsets are the non-empty compact subsets $Q$ such that, for all open
% subsets $U$ and $V$ such that $Q \subseteq U \cup V$, $Q$ is included
% in $U$ or in $V$.

% We first need
% the following well-known lemma.
% \begin{lemma}
%   \label{lemma:Q:min}
%   Let $Q$ be a compact saturated subset of a topological space $X$.
%   Every point of $Q$ is above some minimal point of $Q$.
% \end{lemma}
% \begin{proof}
%   Let $x \in Q$.  The family $A$ of all closed subsets $C$ that
%   intersect $Q$ is a dcpo under reverse inclusion: by compactness, if
%   ${(C_i)}_{i \in I}$ is directed in $A$ (namely, filtered for
%   inclusion), $\fcap_{i \in I} C_i$ also intersects $Q$.  By Zorn's
%   Lemma $A$ contains a maximal element above $\dc x$, namely a set
%   $C \in A$ that is minimal for inclusion.  $C$ intersects $Q$, say at
%   $y$.  Since $\dc y$ is also in $A$ and is included in $C$,
%   minimality implies that $C = \dc y$.  Since $C \subseteq \dc x$, $y$
%   is below $x$.  If $y$ was not minimal in $Q$, there would be an
%   element $z$ of $Q$ strictly below $y$, and then $\dc z$ would be a
%   strictly smaller element of $A$: contradiction.
% \end{proof}

\begin{proposition}
  \label{prop:cChoquet:2nd}
  Every countably-based, compactly Choquet-complete space $X$ is
  convergence Choquet-complete.
\end{proposition}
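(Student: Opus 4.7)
The plan is to construct, from a winning strategy $\sigma$ for $\alpha$ in the compactly Choquet game and a countable base $(B_k)_{k \in \nat}$ for $X$, a winning strategy $\tau$ for the convergence Choquet game. At each step $n$, $\tau$ consults $\sigma$ on the virtual history obtained by taking $\beta$'s virtual moves as the actual $(x_m, V_m)$ and $\alpha$'s as $\sigma$'s own suggestions $U^\sigma_m$; since this virtual play is a legal play against $\sigma$ (the assignment $V^v_{m+1} \eqdef V_{m+1}$ is still contained in $U^\sigma_m$), $\sigma$ wins, so $\bigcap_n U^\sigma_n = Q^\sigma$ is non-empty compact saturated and $(U^\sigma_n)_n$ is a base of its open neighborhoods. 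The strategy $\tau$ will always play $U_n \subseteq U^\sigma_n \cap V_n$ containing $x_n$; playing strictly smaller than $\sigma$'s suggestion is legitimate precisely because of this virtual-play argument.

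For the refinement, I would fix a bijection $n \mapsto (i_n, j_n) \in \nat^2$ so that every pair of indices appears. At step $n$, if $U^\sigma_n \cap V_n \subseteq B_{i_n} \cup B_{j_n}$, then $x_n$ lies in one of them, and $\tau$ plays $U_n \eqdef U^\sigma_n \cap V_n \cap B$, where $B \in \{B_{i_n}, B_{j_n}\}$ is the one containing $x_n$; otherwise $\tau$ plays $U_n \eqdef U^\sigma_n \cap V_n$. By this bookkeeping, whenever $\bigcap_n U_n \subseteq B_i \cup B_j$ for a pair of basic opens, some $U_m$ is already contained in $B_i \cup B_j$ (propagating through $V_m \subseteq U_{m-1}$), so the step $n$ handling the pair $(i,j)$ with $n \geq m$ triggers the pair-splitting branch and yields $U_n \subseteq B_i$ or $U_n \subseteq B_j$, hence $\bigcap_n U_n \subseteq B_i$ or $\bigcap_n U_n \subseteq B_j$. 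Combined with compactness of $\bigcap_n U_n$ and the fact that every open cover of a compact subset of a countably-based space reduces to a finite cover by basic opens, iterated pair-splitting implies $\bigcap_n U_n$ is supercompact, hence $\upc x$ for some $x \in \bigcap_n U_n$ (using Fact~2.2 of \cite{HK:qcont} as recalled above). The sequence $(U_n)_n$ is then a local base of neighborhoods of $x$: any open $W \ni x$ contains $\upc x = \bigcap_n U_n$, and by the neighborhood-base property for $Q^\sigma$ carried through the refinement, some $U_n \subseteq W$.

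The main obstacle is verifying non-emptiness of $\bigcap_n U_n$ after the refinement, since intersecting with basic opens chosen around $x_n$ could in principle shrink the intersection to the empty set even though the unrefined $Q^\sigma$ is non-empty. This would be addressed by arguing that the committed sequence of basic opens $B^{(0)}, B^{(1)}, \dots$ forms a filtered family whose intersections with $Q^\sigma$ are non-empty compact saturated subsets: because each $B^{(n)}$ is chosen to contain $x_n \in Q^\sigma \cap U^\sigma_n \cap \bigcap_{m<n} B^{(m)}$ and $Q^\sigma$ is compact saturated, a careful dynamical choice of the witness point, together with well-filteredness-type reasoning inherited from the compactly Choquet winning condition, keeps the filtered intersection non-empty in the limit.
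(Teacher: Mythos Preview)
Your approach has a genuine gap, and you have essentially identified it yourself: the non-emptiness of $\bigcap_n U_n$ is not established, and the sketch you offer for it does not work. You assert that $x_n \in Q^\sigma$, but this is false in general: $x_n$ lies in $V_n \subseteq U^\sigma_{n-1}$, hence in $U^\sigma_m$ for all $m<n$, but there is no reason for $x_n$ to lie in $U^\sigma_m$ for $m \geq n$, hence no reason for $x_n \in Q^\sigma$. The ``well-filteredness-type reasoning'' is not an argument. The same problem undermines your neighborhood-base claim at the end: knowing that $(U^\sigma_n)_n$ is a base of neighborhoods of $Q^\sigma$ tells you nothing about open sets $W$ that contain $\bigcap_n U_n$ but not $Q^\sigma$, and the phrase ``carried through the refinement'' hides exactly this difficulty. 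A smaller issue: your bijection $n \mapsto (i_n,j_n)$ visits each pair only once, so the unique $n$ handling $(i,j)$ may occur before any $U^\sigma_n \cap V_n$ is contained in $B_i \cup B_j$; you would at least need each pair to recur infinitely often.

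The paper's proof avoids all of this by a game-stealing trick that you should adopt: instead of letting $\sigma$ play on the true history and then \emph{shrinking its output}, feed $\sigma$ a \emph{modified input}. Concretely, when $\beta$ plays $(x_n, V_n)$, pretend $\beta$ played $(x_n, V_n \cap B(x_n,n))$, where $B(x,n) \eqdef \bigcap\{B_i \mid i \leq n,\ x \in B_i\}$, and let $U'_n$ be $\sigma$'s response to that. This is still a legal $\alpha$-history for $\sigma$, so $\sigma$'s winning condition applies directly: $(U'_n)_n$ is already a neighborhood base of a non-empty compact saturated $Q$, with no further work required. Moreover $U'_n \subseteq B(x_n,n)$ automatically, which is exactly the containment you need for the supercompactness argument (if $Q \subseteq \bigcup_{i \in I \cup J} B_i$ with $I,J$ finite, pick $n$ larger than $\max(I \cup J)$ with $U'_n$ inside that union; then $x_n \in B_i$ for some $i$, and $Q \subseteq U'_n \subseteq B(x_n,n) \subseteq B_i$). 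The moral: refine $\beta$'s moves before handing them to $\sigma$, not $\sigma$'s output afterwards, so that the compactly-Choquet guarantee does the hard work for you.
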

\begin{proof}
  Let $\sigma$ be a strategy for $\alpha$ such that the open sets
  ${(U_n)}_{n \in \nat}$ played by $\alpha$ form a base of open
  neighborhoods of some compact saturated set.  Let also
  ${(B_n)}_{n \in \nat}$ be a countable base of the topology, and let
  us write $B (x, n)$ for
  $\bigcap \{B_i \mid 0\leq i\leq n, x \in B_i\}$.  (In case there is
  no $B_i$ containing $x$ for any $i$, $0 \leq i\leq n$, this is the
  whole of $X$.)  We define a new strategy $\sigma'$ for $\alpha$ by
  using a game stealing argument: when $\beta$ plays $x_n$ and $V_n$,
  $\alpha$ simulates what he would have done if $\beta$ had played
  $x_n$ and $V_n \cap B (x_n, n)$ instead.  Formally, we define
  $\sigma' (x_0, V_0, U_0, x_1, V_1, U_1, x_2, V_2, \cdots, x_n, V_n)
  \eqdef \sigma (x_0, V_0, U_0, x_1, V_1, U_1, x_2, V_2, \cdots, x_n,
  V_n \cap B (x_n, n))$.  Let ${(U'_n)}_{n \in \nat}$ denote the open
  sets played by $\alpha$ using $\sigma'$:
  $U'_n = \sigma (x_0, V_0, U'_0, x_1, V_1, U'_1, x_2, V_2, \cdots,
  x_n, V_n \cap B (x_n, n))$.  Since $X$ is compactly
  Choquet-complete, ${(U'_n)}_{n \in \nat}$ is a countable base of
  open neighborhoods of some non-empty compact saturated set $Q$.  We
  claim that $Q$ is of the form $\upc x$.

  We only need to show that $Q$ is supercompact.  We start by assuming
  that $Q$ is included in the union of two open sets $U$ and $V$, and
  we will show that $Q$ is included in one of them.  We can write $U$
  and $V$ as unions of basic open sets $B_n$, hence by compactness
  there are two finite sets $I$ and $J$ of natural numbers such that
  $Q \subseteq \bigcup_{i \in I \cup J} B_i$,
  $\bigcup_{i \in I} B_i \subseteq U$, and
  $\bigcup_{j \in J} B_j \subseteq V$.  Since ${(U'_n)}_{n \in \nat}$
  is a base of open neighborhoods of $Q$, some $U'_n$ is included in
  $\bigcup_{i \in I \cup J} B_i$.  We choose $n$ higher than every
  element of $I \cup J$.  Since $x_n \in U'_n$, $x_n$ is in
  $\bigcup_{i \in I \cup J} B_i$.  If $x_n$ is in $B_i$ for some
  $i \in I$, then $B (x_n, n)$ is included in $B_i$, and then
  $Q \subseteq U'_n \subseteq B (x_n, n)$ (by the definition of
  $\sigma'$) $\subseteq B_i \subseteq U$.  Otherwise, by a similar
  argument $Q \subseteq V$.

  % Let $x$ be a minimal element of $Q$.  This exists by
  % Lemma~\ref{lemma:Q:min}, since $Q$ is non-empty.  Let
  % $U \eqdef X \diff \dc x$, and $V$ be any open neighborhood of $x$.
  % $Q$ is included in $U \cup V$: every element of
  % $Q \diff U = Q \cap \dc x$ must be both below and above $x$ (by
  % minimality), hence in $V$.  We have seen that $Q$ is included in $U$
  % or in $V$.  Since the first case is impossible, $Q \subseteq V$.
  % Hence $Q$ is included in all the open neighborhoods of $x$,
  % therefore also in their intersection, which is $\upc x$.  Since
  % ${(U'_n)}_{n \in \nat}$ is a countable base of open neighborhoods of
  % $Q$, it is also one of $x$.

  It follows that if $Q$ is included in the union of $n \geq 1$ open
  sets, then it is included in one of them.  Given any open cover
  ${(U_i)}_{i \in I}$ of $Q$, there is a finite subcover
  ${(U_i)}_{i \in J}$ of $Q$.  $J$ is non-empty, since
  $Q \neq \emptyset$.  Hence $Q$ is included in $U_i$ for some
  $i \in J$.  This shows that $Q$ is supercompact.  Hence $Q = \upc x$
  for some $x$.  Since ${(U'_n)}_{n \in \nat}$ is a countable base of
  open neighborhoods of $Q$, it is also one of $x$.
\end{proof}

\begin{theorem}
  \label{thm:qPolish}
  The following are equivalent for a countably-based $T_0$ space $X$:
  \begin{enumerate}
  \item $X$ is domain-complete;
  \item $X$ is LCS-complete;
  \item $X$ is quasi-Polish;
  \item $X$ is compactly Choquet-complete;
  \item $X$ is convergence Choquet-complete.
  \end{enumerate}
\end{theorem}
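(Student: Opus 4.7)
My strategy is to close the cycle $(3) \Rightarrow (1) \Rightarrow (2) \Rightarrow (4) \Rightarrow (5) \Rightarrow (3)$. Four of these five arrows are already available from results proved earlier in the paper, so only one genuinely new implication needs an argument.

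Specifically, $(3) \Rightarrow (1)$ is Proposition~\ref{prop:qPolish}; $(1) \Rightarrow (2)$ is contained in Proposition~\ref{prop:ccqm:easy}; $(2) \Rightarrow (4)$ is the second half of Proposition~\ref{prop:LCS:Choquet}; and $(4) \Rightarrow (5)$ is exactly Proposition~\ref{prop:cChoquet:2nd}, which is where the countably-based hypothesis is used in a nontrivial way. Assembling these four arrows closes most of the diagram and shows $(1) \Leftrightarrow (2) \Leftrightarrow (4) \Leftrightarrow (5)$ assuming only one further link.

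The remaining step is $(5) \Rightarrow (3)$, and this is the only place where work has to be done. The plan here is two-stage. First, I observe that convergence Choquet-completeness trivially implies ordinary (strong) Choquet-completeness: if $\alpha$'s winning strategy forces ${(U_n)}_{n \in \nat}$ to be a base of open neighborhoods of some point $x$, then in particular $x \in \bigcap_{n \in \nat} U_n$, so the intersection is non-empty, which is all that ordinary Choquet-completeness demands. Second, I invoke de Brecht's characterization of quasi-Polish spaces among countably-based $T_0$ spaces via Choquet-completeness \cite{deBrecht:qPolish}: a countably-based $T_0$ space is quasi-Polish if and only if it is Choquet-complete. Combining these two facts delivers $(5) \Rightarrow (3)$ and closes the cycle.

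The only real obstacle is the appeal to de Brecht's characterization in the last step; everything else is essentially bookkeeping among results already in the paper. I would therefore expect the writeup to be short: three or four lines listing the easy implications, then two or three lines handling $(5) \Rightarrow (3)$ with a pointer to \cite{deBrecht:qPolish}.
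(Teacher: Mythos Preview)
Your cycle $(3)\Rightarrow(1)\Rightarrow(2)\Rightarrow(4)\Rightarrow(5)\Rightarrow(3)$ is exactly the paper's proof, invoking the same four internal propositions for the first four arrows and appealing to \cite{deBrecht:qPolish} for the last one.

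One correction on the $(5)\Rightarrow(3)$ step: de Brecht's Theorem~51 characterizes quasi-Polish spaces among countably-based $T_0$ spaces by \emph{convergence} Choquet-completeness, not by plain Choquet-completeness as you state. Your two-stage detour (first drop to ordinary Choquet-completeness, then invoke the characterization) is therefore unnecessary, and the weaker characterization you quote is not what is actually proved in \cite{deBrecht:qPolish}. This does not damage the argument, since hypothesis~(5) is already the convergence version; just cite Theorem~51 directly, as the paper does.
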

\begin{proof}
  (iii)$\limp$(i) is by Proposition~\ref{prop:qPolish}, (i)$\limp$(ii)
  is by Proposition~\ref{prop:ccqm:easy}, (ii)$\limp$(iv) is by
  Proposition~\ref{prop:LCS:Choquet}, (iv)$\limp$(v) is by
  Proposition~\ref{prop:cChoquet:2nd}.  Finally, (v)$\limp$(iii) is
  the contents of Theorem~51 of \cite{deBrecht:qPolish}, see also
  \cite[Theorem~11.8]{Chen:qPolish}.
\end{proof}

\begin{remark}
  \label{rem:Q}
  $\rat$, with the usual metric topology, is not quasi-Polish.
  Theorem~\ref{thm:qPolish} implies that it is not LCS-complete
  either.  One can show directly that it is not Choquet-complete, as
  follows.  We fix an enumeration ${(q_n)}_{n \in \nat}$ of $\rat$,
  and we call first element of a non-empty set $A$ the element
  $q_k \in A$ with $k$ least.  At step $0$, $\beta$ plays
  $x_0 \eqdef q_0$, $V_0 \eqdef \rat$.  At step $n+1$, $\beta$ plays
  $V_{n+1}$, defined as $U_n$ minus its first element, and lets
  $x_{n+1}$ be the first element of $V_{n+1}$.  This is possible since
  every non-empty open subset of $\rat$ is infinite.  One checks
  easily that, whatever $\alpha$ plays, $\bigcap_{n \in \nat} V_n$ is
  empty.
\end{remark}

\section{The Baire property}
\label{sec:baire-property}

Every Choquet-complete space is \emph{Baire}
\cite[Theorem~7.6.8]{JGL-topology}, where a Baire space is a space in
which every intersection of countably many dense open sets is dense.
\begin{corollary}
  \label{corl:Baire}
  Every LCS-complete space is Baire.
\end{corollary}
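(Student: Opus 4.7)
The plan is to chain together two facts already available in the excerpt. First, by Proposition~\ref{prop:LCS:Choquet}, every LCS-complete space $X$ is compactly Choquet-complete: player $\alpha$ has a strategy such that, against any play by $\beta$, the sequence of open sets ${(U_n)}_{n \in \nat}$ that $\alpha$ produces forms a base of open neighborhoods of some \emph{non-empty} compact saturated set $Q \subseteq X$. Second, the excerpt explicitly cites Theorem~7.6.8 of \cite{JGL-topology}, which states that every Choquet-complete space is Baire. So the only content that needs to be spelled out is the implication from compactly Choquet-complete to Choquet-complete.

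That implication is essentially by inspection of the definitions. Pick any point $x \in Q$; since ${(U_n)}_{n \in \nat}$ is a base of open neighborhoods of $Q$ and $x \in Q \subseteq U_n$ for all $n$, we have $x \in \bigcap_{n \in \nat} U_n$. Hence the same strategy that witnesses compact Choquet-completeness also witnesses Choquet-completeness (the intersection of the $U_n$ is non-empty). There is nothing delicate here; the only small point to note is that $Q$ is non-empty by the definition of compactly Choquet-complete (and indeed, this non-emptiness was established in the proof of Proposition~\ref{prop:LCS:Choquet} from well-filteredness).

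Putting the pieces together, every LCS-complete space is compactly Choquet-complete, hence Choquet-complete, hence Baire. I do not expect any genuine obstacle: the corollary is a two-line deduction, and the main work has already been done in Proposition~\ref{prop:LCS:Choquet}. The only stylistic choice is whether to mention the intermediate step ``Choquet-complete'' explicitly or to roll the ``pick a point of $Q$'' argument into a single sentence; I would opt for the latter to keep the corollary's proof as terse as its statement warrants.
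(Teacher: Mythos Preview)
Your proposal is correct and matches the paper's own argument exactly: the paper likewise invokes Proposition~\ref{prop:LCS:Choquet} together with the cited fact that every Choquet-complete space is Baire \cite[Theorem~7.6.8]{JGL-topology}, leaving the trivial implication from compactly Choquet-complete to Choquet-complete implicit. Your explicit justification of that step (pick a point of the non-empty compact set $Q$) is a harmless elaboration of what the paper takes for granted.
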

This generalizes Isbell's result that every locally compact sober
space is Baire \cite{Isbell:Baire}.

We will refine this below.  We need to observe the following folklore
result.
\begin{lemma}
  \label{lemma:closed:domain}
  Let $Y$ be a continuous dcpo, and $C$ be a closed subset of $Y$.
  The way-below relation on $C$ is the restriction of the way-below
  relation $\ll$ on $Y$ to $C$.  $C$ is a continuous dcpo, with the
  restriction of the ordering $\leq$ of $Y$ to $C$.
\end{lemma}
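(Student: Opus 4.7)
The plan is to exploit the fact that closed sets in the Scott topology are precisely the lower sets that are closed under directed suprema. Thus $C$ is automatically a dcpo under the restriction of $\leq$ (any directed $D \subseteq C$ is directed in $Y$ and has $\sup D \in C$), and moreover $\dc y \subseteq C$ for every $y \in C$.

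First I would handle the easy inclusion: if $x, y \in C$ and $x \ll_Y y$, then for any directed $D \subseteq C$ with $y \leq \sup_C D$, the same $D$ is directed in $Y$ with the same supremum (by the previous paragraph), so some $d \in D$ satisfies $x \leq d$; hence $x \ll_C y$. For the reverse inclusion, suppose $x \ll_C y$ and let $D \subseteq Y$ be directed with $y \leq \sup_Y D$. Using that $Y$ is continuous, the set $\twoheaddownarrow_Y y = \{e \in Y \mid e \ll_Y y\}$ is directed with supremum $y$, and it is contained in $\dc y \subseteq C$ because $C$ is downward-closed. Thus $\twoheaddownarrow_Y y$ is a directed subset of $C$ whose supremum (in both $Y$ and $C$) is $y$. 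By $x \ll_C y$, there is some $e \in \twoheaddownarrow_Y y$ with $x \leq e$. Since $e \ll_Y y \leq \sup_Y D$, some $d \in D$ satisfies $e \leq d$, and therefore $x \leq d$. This gives $x \ll_Y y$, establishing $\ll_C = \ll_Y \cap (C \times C)$.

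Finally, for continuity of $C$: the argument above actually shows that for each $y \in C$ the set of elements way-below $y$ in $C$ coincides with $\twoheaddownarrow_Y y$ (using once more that $\twoheaddownarrow_Y y \subseteq \dc y \subseteq C$). Since $Y$ is continuous, this set is directed and its supremum in $Y$ is $y$; as it lies in $C$, the same holds for its supremum computed in $C$. So every $y \in C$ is the directed supremum of its way-below set in $C$, which is the definition of continuity.

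The only subtle point---and the step I would take most care over---is the reverse way-below inclusion, since a directed family $D \subseteq Y$ witnessing that $x \not\ll_Y y$ need not lie inside $C$ and cannot in general be intersected with $C$ without losing directedness or the bound on the supremum. The trick is to avoid manipulating $D$ directly and instead use continuity of $Y$ to replace $y$ by the canonical directed family $\twoheaddownarrow_Y y$, which automatically sits inside $C$ by downward closure; that move transfers the problem into $C$, where the hypothesis $x \ll_C y$ applies.
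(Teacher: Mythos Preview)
Your proof is correct and, for the reverse way-below inclusion, takes a more elementary route than the paper's. The paper first shows the easy inclusion $\ll_Y \Rightarrow \ll_C$ and uses it to conclude that $C$ is continuous; then, to prove $\ll_C \Rightarrow \ll_Y$, it invokes the meet-continuity of continuous dcpos (so that $y \leq \sup D$ implies $y$ lies in the Scott closure of $\dc D \cap \dc y$), observes that $\dc D \cap \dc y \subseteq C$, and uses that $\uuarrow_C x$ is Scott-open in the already-established continuous dcpo $C$ to find $z \in \dc D \cap \dc y$ with $x \ll_C z$. Your argument bypasses both meet-continuity and the prior continuity of $C$: instead of manipulating $D$, you feed the canonical family $\{e \in Y \mid e \ll_Y y\} \subseteq C$ into the hypothesis $x \ll_C y$ to obtain $x \leq e \ll_Y y$, and then apply $e \ll_Y y$ directly to $D$. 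This is shorter and needs nothing beyond the definition of $\ll_Y$ and downward closure of $C$; the paper's argument, by contrast, illustrates how meet-continuity mediates between a dcpo and its Scott-closed subspaces.
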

\begin{proof}
  First, $C$ is a dcpo under the restriction $\leq_C$ of $\leq$ to
  $C$, and directed suprema are computed as in $Y$.
  
  Let $\ll_C$ denote the way-below relation on $C$.  For all
  $x, y \in C$, if $x \ll y$ (in $Y$) then $x \ll_C y$: every directed
  family of elements of $C$ whose supremum (in $C$, equivalently in
  $Y$) lies above $y$ must contain an element above $x$.

  It follows that $C$ is a continuous dcpo: every element $x$ of $C$
  is the supremum of the directed family of elements $y$ that are
  way-below $x$ in $Y$, and all those elements are in $C$ and
  way-below $x$ in $C$.

  Conversely, we assume $x \ll_C y$, and we consider a directed family
  $D$ in $Y$ whose supremum lies above $y$.  Every continuous dcpo is
  meet-continuous \cite[Theorem~III-2.11]{GHKLMS:contlatt}, meaning
  that if $y \leq \sup D$ for any directed family $D$ in $Y$, then $y$
  is in the Scott-closure of $\dc D \cap \dc y$.  (The theory of
  meet-continuous dcpos is due to Kou, Liu and Luo
  \cite{KLL:meetcont}.)  In the case at hand, $\dc D \cap \dc y$ is
  included in $\dc y$ hence in $C$.  Since $C$ is a continuous dcpo
  and $x \ll_C y$, the set
  $\uuarrow_C x \eqdef \{z \in C \mid x \ll_C z\}$ is Scott-open in
  $C$, and contains $y$.  Then $\uuarrow_C x$ intersects the
  Scott-closure of $\dc D \cap \dc y$, and since it is open, it also
  intersects $\dc D \cap \dc y$ itself, say at $z$.  Then
  $x \ll_C z \leq d$ for some $d \in D$, which implies $x \leq d$.
  Therefore $x \ll y$.
\end{proof}

\begin{proposition}
  \label{prop:ccqm:closed}
  Every $G_\delta$ subset, every closed subset of a domain-complete
  (resp., LCS-complete) space is domain-complete (resp., LCS-complete).
\end{proposition}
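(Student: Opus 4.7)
The plan is to handle separately two cases---$G_\delta$ subsets (quickly) and closed subsets (more substantively)---and in each case exhibit the alleged subspace as a $G_\delta$ inside a locally compact sober space (resp.\ a continuous dcpo). Throughout, write the given space $X$ as $\fcap_{n \in \nat} W_n$ inside some such ambient space $Y$.

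For a $G_\delta$ subset $E$ of $X$, Lemma~\ref{lemma:Gdelta:subspace}(3) immediately yields that $E$ is $G_\delta$ in $Y$ itself, so the same $Y$ witnesses the required completeness of $E$. For a closed subset $C$ of $X$, I would write $C = F \cap X$ with $F$ closed in $Y$. The central observation will then be that $F$, equipped with its subspace topology, is again locally compact sober (resp.\ a continuous dcpo whose Scott topology agrees with the subspace topology). Granted that, $C = F \cap \fcap_n W_n = \fcap_n (F \cap W_n)$ displays $C$ as a $G_\delta$ subset of $F$, finishing the proof.

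In the LCS-complete flavor, sobriety of $F$ follows because closed subspaces of sober spaces are sober. For local compactness, given $x \in F$ and an open neighborhood $U$ of $x$ in $Y$, pick a compact saturated neighborhood $Q \subseteq U$ of $x$ in $Y$; then $Q \cap F$ is compact in $Y$ (as a closed subset of a compact set) hence compact in $F$ by Lemma~\ref{lemma:Gdelta:subspace}(1), saturated in $F$ because the specialization preorder on $F$ is the restriction of that on $Y$, and contains the open $F$-neighborhood $\interior{Q} \cap F$ of $x$.

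In the domain-complete flavor, the closed subset $F$ of $Y$ is Scott-closed, hence is a sub-dcpo of $Y$, and by Lemma~\ref{lemma:closed:domain} it is a continuous dcpo under the restricted order. The subtle step is to check that the subspace topology on $F$ (inherited from the Scott topology of $Y$) coincides with the intrinsic Scott topology of $F$: the inclusion subspace $\subseteq$ Scott is routine, since directed suprema in $F$ are computed in $Y$; for the reverse, given a Scott-open $V$ of $F$, the set $F \diff V$ is downward closed and directed-sup closed in $F$, and using that $F$ is itself Scott-closed in $Y$ one verifies that $F \diff V$ is even Scott-closed in $Y$, so its complement in $Y$ is a Scott-open set whose trace on $F$ equals $V$. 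I expect the main obstacle to be precisely this last topology-matching lemma; once it is in place, the rest of the proof is routine assembly.
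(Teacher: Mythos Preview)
Your proof is correct and follows essentially the same route as the paper: handle the $G_\delta$ case via Lemma~\ref{lemma:Gdelta:subspace}(3), and for closed subsets pass to the closed subspace $F$ of the ambient $Y$, which remains locally compact sober (resp.\ a continuous dcpo by Lemma~\ref{lemma:closed:domain}). You are in fact more explicit than the paper in checking that the Scott topology on $F$ agrees with its subspace topology from $Y$---a point the paper leaves implicit but which is indeed needed for the domain-complete conclusion.
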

\begin{proof}
  Let $X$ be the intersection of a descending sequence
  ${(W_n)}_{n \in \nat}$ of open subsets of $Y$, where $Y$ is a
  continuous dcpo (resp., a locally compact sober space).

  Given any $G_\delta$ subset $A \eqdef \bigcap_{m \in \nat} V_m$ of
  $X$, where each $V_m$ is open in $X$, let $\widehat V_m$ be some
  open subset of $Y$ such that $\widehat V_m \cap X = V_m$.  Then
  $A$ is also equal to the countable intersection $\bigcap_{m, n \in
    \nat} \widehat V_m \cap W_n$, hence $A$ is $G_\delta$ in $Y$.

  Given any closed subset $C$ of $X$, $C$ is the intersection of $X$
  with some closed subset $C'$ of $Y$.  If $Y$ is a continuous dcpo,
  then $C'$ is also a continuous dcpo by
  Lemma~\ref{lemma:closed:domain}.  Then
  $C = \bigcap_{n \in \nat} (C' \cap W_n)$, showing that $C$ is a
  $G_\delta$ subset of $C'$, hence is domain-complete.  If $Y$ is a
  locally compact sober space, $C'$ is sober as a subspace (being the
  equalizer of the indicator function of its complement and of the
  constant $0$ map), and is locally compact: for every $x \in C'$, for
  every open neighborhood $U \cap C'$ of $x$ in $C'$ (where $U$ is
  open in $Y$), there is a compact saturated neighborhood $Q$ of $x$
  in $Y$ included in $U$; then $Q \cap C'$ is a compact saturated
  neighborhood of $x$ in $C'$ included in $U \cap C'$.  Again
  $C = \bigcap_{n \in \nat} (C' \cap W_n)$, showing that $C$ is a
  $G_\delta$ subset of $C'$, hence is LCS-complete.
\end{proof}
A space is \emph{completely Baire} if and only if all its closed
subspaces are Baire.  This is strictly stronger than the Baire
property.  Proposition~\ref{prop:ccqm:closed} and
Corollary~\ref{corl:Baire} together entail the following, which
generalizes the fact that every quasi-Polish space is completely Baire
\cite[beginning of Section~5]{deBrecht:Hurewicz}.
\begin{corollary}
  \label{corl:ccqm:cBaire}
  Every LCS-complete space is completely Baire.
\end{corollary}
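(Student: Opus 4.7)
The plan is to combine the two results that immediately precede this corollary, essentially as a one-step deduction. Let $X$ be an LCS-complete space, and let $C$ be an arbitrary closed subspace of $X$. First I would invoke Proposition~\ref{prop:ccqm:closed}, which tells us that every closed subset of an LCS-complete space is itself LCS-complete; hence $C$ is LCS-complete when equipped with the subspace topology.

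Then I would apply Corollary~\ref{corl:Baire} (which in turn comes from compact Choquet-completeness via Proposition~\ref{prop:LCS:Choquet}, together with the standard fact that Choquet-complete spaces are Baire) to conclude that $C$ is a Baire space. Since $C$ was an arbitrary closed subspace of $X$, every closed subspace of $X$ is Baire, which by definition says that $X$ is completely Baire.

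There is no real obstacle here: the work has already been done in Proposition~\ref{prop:ccqm:closed} (closure of the LCS-complete class under closed subsets, via the fact that closed subsets of locally compact sober spaces are locally compact sober) and in Corollary~\ref{corl:Baire}. The only thing to be careful about is making sure we quote the right stability property, namely that closed subspaces of LCS-complete spaces remain LCS-complete; otherwise the implication ``LCS-complete $\Rightarrow$ Baire'' alone would not suffice to conclude complete Baireness.
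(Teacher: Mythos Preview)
Your proposal is correct and matches the paper's own argument exactly: the corollary is stated as an immediate consequence of Proposition~\ref{prop:ccqm:closed} (closed subspaces of LCS-complete spaces are LCS-complete) together with Corollary~\ref{corl:Baire} (LCS-complete spaces are Baire).
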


\section{Stone duality for domain-complete and LCS-complete spaces}
\label{sec:stone-duality-domain}

There is an adjunction $\Open \dashv \pt$ between $\Topcat$ and the
category of locales $\Loc$---the opposite of the category of frames
$\Frm$.  (See \cite[Section~8.1]{JGL-topology}, for example.)  The
functor $\Open \colon \Topcat \to \Loc$ maps every space $X$ to
$\Open X$, and every continuous map $f$ to the frame homomorphism
$\Open f \colon U \mapsto f^{-1} (U)$.  Conversely,
$\pt \colon \Loc \to \Topcat$ maps every frame $L$ to its set of
completely prime filters, with the topology whose open sets are
$\Open_u \eqdef \{x \in \pt L \mid u \in x\}$, for each $u \in L$.
This adjunction restricts to an adjoint equivalence between the full
subcategories of sober spaces and spatial locales, between the
category of locally compact sober spaces and the opposite of the
category of continuous distributive complete lattices by the
Hofmann-Lawson theorem \cite{HL:lc:sober} (see also
\cite[Theorem~8.3.21]{JGL-topology}), and between the category of
continuous dcpos and the opposite of the category of completely
distributive complete lattices \cite[Theorem~8.3.43]{JGL-topology}.

Let us recall what quotient frames are, following
\cite[Section~3.4]{Heckmann:qPolish}.  More generally, the book by
Picado and Pultr \cite{framesandlocales} is a recommended reference on
frames and locales.  A \emph{congruence preorder} on a frame $L$ is a
transitive relation $\preceq$ such that $u \leq v$ implies
$u \preceq v$ for all $u, v \in L$, $\bigvee_{i \in I} u_i \preceq v$
whenever $u_i \preceq v$ for every $i \in I$, and
$u \preceq \bigwedge_{i=1}^n v_i$ whenever $u \preceq v_i$ for every
$i$, $1\leq i \leq n$.  We can then form the \emph{quotient frame}
$L/{\preceq}$, whose elements are the equivalence classes of $L$ modulo
$\preceq \cap \succeq$.  Given any binary relation $R$ on $L$, there
is a least congruence preorder $\preceq_R$ such that $u \preceq_R v$
for all $(u, v) \in R$.  % (Explicitly,
% one can build $L/{\preceq_R}$ as the subframe of \emph{$R$-saturated}
% elements of $L$, namely those elements $u \in L$ such that
% for all $(a, b) \in R$, for every $v \in L$, $a \wedge v \leq u$
% if and only if $b \wedge v \leq u$
%.)
In particular, for every subset $A$ of $L$, there is a least
congruence preorder $\preceq_A$ such that $\top \preceq_A v$ for every
$v \in A$, where $\top$ is the largest element of $L$.  Using
\cite[Section~11.2]{framesandlocales} for instance, one can check that
$L/{\preceq_A}$ can be equated with the subframe of $L$ consisting of
the \emph{$A$-saturated} elements of $L$, namely those elements
$u \in L$ such that $u = (a \limp u)$ for every $a \in A$, where
$\limp$ is Heyting implication in $L$
($a \limp u \eqdef \bigvee \{b \mid a \wedge b \leq u\}$).

\begin{theorem}
  \label{thm:ccmq:pt}
  The adjunction $\Open \dashv \pt$ restricts to an adjoint
  equivalence between the category of LCS-complete spaces (resp.,
  domain-complete spaces) and the opposite of the category of quotient
  frames $L / {\preceq_A}$, where $A$ is a countable subset of $L$ and
  $L$ is a continuous distributive (resp., completely distributive)
  continuous lattice.
\end{theorem}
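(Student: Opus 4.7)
The plan is to derive the theorem from the two dualities already recalled in this section: Hofmann-Lawson for the LCS-complete case, giving an equivalence between locally compact sober spaces and the opposite of continuous distributive complete lattices, and Lawson's duality between continuous dcpos and completely distributive complete lattices for the domain-complete case. The only extra ingredient is that $G_\delta$ subspaces on the topological side should correspond to quotients by countably generated congruences on the frame side.

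For the space-to-frame direction, I would take $X$ LCS-complete, realised as $\fcap_{n \in \nat} W_n$ inside a locally compact sober space $Y$, and set $L \eqdef \Open Y$ and $A \eqdef \{W_n \mid n \in \nat\}$. By Hofmann-Lawson, $L$ is a continuous distributive complete lattice. The restriction map $r \colon L \to \Open X$, $V \mapsto V \cap X$, is a surjective frame homomorphism; since $X \subseteq W_n$ gives $r(W_n) = X = r(\top)$, the kernel of $r$ contains the generators of $\preceq_A$, so $r$ factors through $L/\preceq_A$. I would then show that the factored map $L/\preceq_A \to \Open X$ is an isomorphism by identifying $\preceq_A$ with the congruence that presents the intersection of the open sublocales at each $W_n$, and observing that, for sober $Y$, this intersection sublocale has the same frame of opens as the subspace $\fcap_{n \in \nat} W_n$.

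The frame-to-space direction is obtained by running the construction in reverse: from a continuous distributive complete lattice $L$ and a countable $A \subseteq L$, Hofmann-Lawson produces $Y \eqdef \pt L$ locally compact sober, and the subspace $X \eqdef \bigcap_{a \in A} \Open_a$ of $Y$ is LCS-complete by definition. The completely prime filters of $L/\preceq_A$ are exactly those completely prime filters of $L$ containing every $a \in A$, so $\pt(L/\preceq_A) \cong X$, and the previous paragraph then shows that $L/\preceq_A \cong \Open X$ is spatial. Combining the two directions yields the claimed adjoint equivalence: the unit $X \to \pt \Open X$ is an isomorphism because LCS-complete spaces are sober (Proposition~\ref{prop:ccqm:sober}), and the counit is an isomorphism by the spatiality just noted; functoriality is inherited from the base adjunction $\Open \dashv \pt$ between $\Topcat$ and $\Loc$. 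The domain-complete case is formally identical, with Lawson's duality replacing Hofmann-Lawson.

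The main obstacle will be the careful verification that $\ker r = \preceq_A$, i.e., that $V_1 \cap X = V_2 \cap X$ forces $V_1 \preceq_A V_2$ and $V_2 \preceq_A V_1$. A clean proof routes through the nucleus description of open sublocales: the $A$-saturated elements of $L$ should be shown to be precisely the opens of $Y$ of the form $\bigvee \{V \in L \mid V \cap X \subseteq U\}$ for $U \in \Open X$, with the identification $U \mapsto U \cap X$ going back. The translation between the congruence-preorder presentation used in the statement and the standard nucleus formalism is where the bookkeeping will concentrate.
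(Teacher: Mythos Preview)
Your overall architecture matches the paper's: reduce to Hofmann--Lawson (resp.\ Lawson) duality for the ambient space, then identify $\Open X$ with the quotient $L/{\preceq_A}$. The paper also uses sobriety of $X$ for the unit and spatiality for the counit, just as you do.

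The gap is in your treatment of the ``main obstacle'', namely showing that the factored map $L/{\preceq_A} \to \Open X$ is an isomorphism. You propose to settle this by ``observing that, for sober $Y$, this intersection sublocale has the same frame of opens as the subspace $\fcap_{n \in \nat} W_n$''. That observation is false for sober $Y$ in general: the localic intersection of countably many open sublocales need not be spatial, and even when its space of points is the topological intersection $X$, the comparison map $L/{\preceq_A} \to \Open X$ can fail to be injective. Sobriety of $Y$ only gives you $\pt(L/{\preceq_A}) \cong X$; it does not give spatiality of $L/{\preceq_A}$. Your nucleus computation in the last paragraph would have to prove that every $A$-saturated element arises as the largest open with a prescribed trace on $X$, and nothing in your outline forces that.

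The paper closes exactly this gap by invoking Heckmann's theorem \cite[Theorem~3.13]{Heckmann:qPolish}: if $Y$ is \emph{completely Baire} and $R$ is countable, then $\Open Y/{\preceq_R}$ is spatial and isomorphic to the frame of opens of the corresponding $\bPi^0_2$ subspace. The hypothesis ``completely Baire'' is available here because it was established earlier (Corollary~\ref{corl:ccqm:cBaire}) that locally compact sober spaces, and more generally LCS-complete spaces, are completely Baire. So the missing ingredient in your argument is not a bookkeeping matter about nuclei versus congruence preorders; it is the Baire-type input that makes the localic and spatial intersections agree.
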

\begin{proof}
  We use the following theorem, due to Heckmann
  \cite[Theorem~3.13]{Heckmann:qPolish}: given any completely Baire
  space $Y$, and any countable relation $R \subseteq \Open Y \times
  \Open Y$, the quotient frame $\Open Y / {\preceq_R}$ is spatial, and
  isomorphic to the frame of open sets of $\bigcap_{(U, V) \in R} (Y
  \diff U) \cup V$.

  For every domain-complete (resp., LCS-complete) space $X$, written
  as $\fcap_{n \in \nat} W_n$, where each $W_n$ is open in the
  continuous dcpo (resp., locally sober space) $Y$, $Y$ is itself
  LCS-complete (Proposition~\ref{prop:ccqm:easy}) hence completely
  Baire (Corollary~\ref{corl:ccqm:cBaire}).  It follows from
  Heckmann's theorem that $\Open X$ is isomorphic to
  $\Open Y / {\preceq_A}$ where $A \eqdef \{W_n \mid n \in \nat\}$.
  Therefore $\Open X$ is a quotient frame of a continuous distributive
  (resp., completely distributive) continuous lattice by the countable
  set $A$.

  By Proposition~\ref{prop:ccqm:sober}, every LCS-complete space is
  sober, so the unit
  $x \in X \mapsto \{U \in \Open X \mid x \in U\} \in \pt \Open X$ is
  a homeomorphism \cite[Proposition~8.2.22, Fact~8.2.5]{JGL-topology}.

  In the other direction, let $L$ be a completely distributive (resp.,
  continuous distributive) continuous lattice.  By the Hofmann-Lawson
  theorem, $L$ is isomorphic to the open set lattice of some locally
  compact sober space $Y$.  Without loss of generality, we assume that
  $L = \Open Y$.  As above, $Y$ is LCS-complete hence completely
  Baire.  By Heckmann's theorem, for every countable relation $R$ on
  $L$, $L / {\preceq_R}$ is isomorphic to $\Open X$ where
  $X \eqdef \bigcap_{(U, V) \in R} (Y \diff U) \cup V$.  In
  particular, for any countable subset
  $A \eqdef \{W_n \mid n \in \nat\}$ of $L$, we can equate
  $L / {\preceq_A}$ with $\Open X$ where
  $X \eqdef \bigcap_{n \in \nat} W_n$.  By construction, $X$ is
  domain-complete (resp., LCS-complete).  Finally, the counit
  $U \in L \mapsto \Open_U$ is an isomorphism because $L$ is spatial
  \cite[Proposition~8.1.17]{JGL-topology}.
\end{proof}

\section{Consonance}
\label{sec:consonance}

For a subset $Q$ of a topological space $X$, let $\blacksquare Q$ be the family of open
neighborhoods of $Q$.  A space is \emph{consonant} if and only if,
given any Scott-open family $\mathcal U$ of open sets, and given any
$U \in \mathcal U$, there is a compact saturated set $Q$ such that
$U \in \blacksquare Q \subseteq \mathcal U$.  Equivalently, if and
only if every Scott-open family of opens is a union of sets of the
form $\blacksquare Q$, $Q$ compact saturated.
% The latter generate the
% compact-open topology on the space $\Open X$ of open subsets of $X$,
% if we equate $\Open X$ with the family of continuous maps from $X$ to
% Sierpi\'nski space (namely, $\{0, 1\}$ with the Scott topology of
% $0 < 1$).  Using the same identification, the Isbell topology on
% $\Open X$ coincides with the Scott topology.  The latter Scott
% topology is always finer than the compact-open topology.  A space is
% consonant if and only if the two topologies coincide.

In a locally compact space, every open subset $U$ is the union of the
interiors $\interior Q$ of compact saturated subsets $Q$ of $U$, and
that family is directed.  It follows immediately that every locally
compact space is consonant.  Another class of consonant spaces is
given by the regular \v{C}ech-complete spaces, following Dolecki,
Greco and Lechicki \cite[Theorem~4.1 and footnote~8]{DGL:consonant}.

% \ForAuthors{JGL: if we can prove that all regular \v{C}ech-complete
%   spaces are LCS-complete, then the following generalizes Dolecki et
%   al.}

Consonance is not preserved under the formation of $G_\delta$ subsets
\cite[Proposition~7.3]{DGL:consonant}.  Nonetheless, we have:
\begin{proposition}
  \label{prop:ccqm:consonant}
  Every LCS-complete space is consonant.
\end{proposition}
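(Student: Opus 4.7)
The plan is to realize $X$ as a descending intersection $\fcap_{n \in \nat} W_n$ of open subsets of a locally compact sober space $Y$, to lift $\mathcal U$ to the Scott-open family $\widehat{\mathcal U} \eqdef \{V \in \Open Y : V \cap X \in \mathcal U\}$ on $\Open Y$, and to iteratively build a descending sequence of compact saturated subsets of $Y$ whose intersection is a compact saturated subset of $X$ witnessing consonance. The trivial case $\mathcal U = \Open X$ is settled by taking $Q \eqdef \emptyset$, so assume $\emptyset \notin \mathcal U$; then $V \in \widehat{\mathcal U}$ forces $V \cap X \neq \emptyset$, and in particular $V \neq \emptyset$. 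Note that $\widehat{\mathcal U}$ is Scott-open in $\Open Y$ because $V \mapsto V \cap X$ is Scott-continuous. I would pick $\widehat U$ open in $Y$ with $\widehat U \cap X = U$ and set $V_0 \eqdef \widehat U \cap W_0$; since $X \subseteq W_0$, $V_0 \cap X = U$, so $V_0 \in \widehat{\mathcal U}$.

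Since $Y$ is locally compact, $\Open Y$ is a continuous lattice in which $V' \Subset V$ iff $V' \subseteq K \subseteq V$ for some compact saturated $K$. I would inductively construct $V_n \in \widehat{\mathcal U}$ open in $Y$ and compact saturated $K_n \subseteq Y$ satisfying $V_{n+1} \subseteq K_n \subseteq V_n \cap W_{n+1}$. Given $V_n$, the open set $V_n \cap W_{n+1}$ has the same intersection with $X$ as $V_n$ (since $X \subseteq W_{n+1}$), so $V_n \cap W_{n+1} \in \widehat{\mathcal U}$; writing it as the directed supremum of the opens way-below it, Scott-openness of $\widehat{\mathcal U}$ yields some $V_{n+1} \in \widehat{\mathcal U}$ with $V_{n+1} \Subset V_n \cap W_{n+1}$, and the way-below characterization provides $K_n$.

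Finally, I would set $Q \eqdef \fcap_{n \in \nat} K_n$. The $K_n$ are descending and nonempty (each contains the nonempty $V_{n+1}$), so well-filteredness of the sober space $Y$ makes $Q$ a nonempty compact saturated subset of $Y$. From $K_n \subseteq V_n \subseteq W_n$ for every $n$, $Q \subseteq \fcap_n W_n = X$, and Lemma~\ref{lemma:Gdelta:subspace} then makes $Q$ compact saturated in $X$; since $Q \subseteq V_0 \subseteq \widehat U$ and $Q \subseteq X$, we have $Q \subseteq U$. For $\blacksquare Q \subseteq \mathcal U$: an open neighbourhood $W$ of $Q$ in $X$ equals $\widetilde W \cap X$ for some open $\widetilde W$ of $Y$, and $Q \subseteq X \cap W$ forces $\widetilde W \supseteq Q$; well-filteredness again yields $\widetilde W \supseteq K_n \supseteq V_{n+1}$ for some $n$, so $\widetilde W \in \widehat{\mathcal U}$ by upward closure, hence $W \in \mathcal U$. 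The main subtlety is orchestrating the inductive step: $V_{n+1}$ must simultaneously be trapped inside $W_{n+1}$, so that $Q$ lands in $X$, and kept inside $\widehat{\mathcal U}$, so that the witness property propagates — intersecting with $W_{n+1}$ within $\widehat{\mathcal U}$ before taking way-below approximants in $\Open Y$ is exactly what reconciles the two requirements.
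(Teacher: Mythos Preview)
Your proof is correct and follows essentially the same approach as the paper: both iteratively build a descending chain of compact saturated subsets of $Y$, each sandwiched between an open set whose trace on $X$ lies in $\mathcal U$ and the next $W_n$, then take the filtered intersection and invoke well-filteredness of $Y$ to obtain the witnessing compact $Q \subseteq X$. Your explicit lift $\widehat{\mathcal U} = \{V \in \Open Y : V \cap X \in \mathcal U\}$ and use of the way-below relation on $\Open Y$ are a clean repackaging of the paper's direct argument with interiors of compact saturated sets, but the underlying mechanism is identical.
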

\begin{proof}
  Let $X$ be the intersection of a descending sequence
  ${(W_n)}_{n \in \nat}$ of open subsets of a locally compact sober
  space $Y$.  Let $\mathcal U$ be a Scott-open family of open subsets
  of $X$, and $U \in \mathcal U$.

  By the definition of the subspace topology, there is an open subset
  $\widehat U$ of $Y$ such that $\widehat U \cap X = U$.  By local
  compactness, $\widehat U \cap W_0$ is the union of the directed
  family of the sets $\interior Q$, where $Q$ ranges over the family
  $\mathcal Q_0$ of compact saturated subsets of
  $\widehat U \cap W_0$.  We have
  $\dcup_{Q \in \mathcal Q_0} \interior Q \cap X = \widehat U \cap
  W_0 \cap X = \widehat U \cap X = U$.  Since $U$ is in $\mathcal U$
  and $\mathcal U$ is Scott-open, $\interior Q \cap X$ is in
  $\mathcal U$ for some $Q \in \mathcal Q_0$.  Let $Q_0$ be this
  compact saturated set $Q$, $\widehat U_0 \eqdef \interior {Q_0}$,
  and $U_0 \eqdef \widehat U_0 \cap X$.  Note that
  $U_0 \in \mathcal U$,
  $\widehat U_0 \subseteq Q_0 \subseteq \widehat U \cap W_0$.

  We do the same thing with $\widehat U_0 \cap W_1$ instead of
  $\widehat U \cap W_0$.  There is a compact saturated subset $Q_1$ of
  $\widehat U_0 \cap W_1$ such that $\interior {Q_1} \cap X$ is in
  $\mathcal U$.  Then, letting $\widehat U_1 \eqdef \interior {Q_1}$,
  $U_1 \eqdef \widehat U_1 \cap X$, we obtain that
  $U_1 \in \mathcal U$,
  $\widehat U_1 \subseteq Q_1 \subseteq \widehat U_0 \cap W_1$.

  Iterating this construction, we obtain for each $n \in \nat$ a
  compact saturated subset $Q_n$ and an open subset $\widehat U_n$ of
  $Y$, and an open subset $U_n$ of $X$ such that $U_n \in \mathcal U$
  for each $n$, and
  $\widehat U_{n+1} \subseteq Q_{n+1} \subseteq \widehat U_n \cap
  W_n$.

  Let $Q \eqdef \bigcap_{n \in \nat} Q_n$.  Since $Y$ is sober hence
  well-filtered, $Q$ is compact saturated in $Y$.

  Since $Q \subseteq \bigcap_{n \in \nat} W_n = X$, $Q$ is a compact
  saturated subset of $Y$ that is included in $X$, hence a compact
  subset of $X$ by Lemma~\ref{lemma:Gdelta:subspace}, item~3.

  We have
  $Q \subseteq Q_0 \subseteq \widehat U \cap W_0 \subseteq \widehat
  U$, and $Q \subseteq X$, so $Q \subseteq \widehat U \cap X = U$.
  Therefore $U \in \blacksquare Q$.

  For every $W \in \blacksquare Q$, write $W$ as the intersection of
  some open subset $\widehat W$ of $Y$ with $X$.  Since
  $Q = \bigcap_{n \in \nat} Q_n \subseteq \widehat W$, by
  well-filteredness some $Q_n$ is included in $\widehat W$.  Hence
  $\widehat U_n \subseteq Q_n \subseteq \widehat W$.  Taking
  intersections with $X$, $U_n \subseteq W$.  Since $U_n$ is in
  $\mathcal U$, so is $W$.
\end{proof}

\begin{remark}
  \label{rem:ccqm:consonant:gdelta}
  In the proof of Proposition~\ref{prop:ccqm:consonant}, $Q$ is a
  $G_\delta$ subset of $X$.  Indeed,
  $\widehat U_{n+1} \subseteq Q_{n+1} \subseteq \widehat U_n \cap W_n$
  for every $n \in \nat$, hence
  $Q = \bigcap_{n \in \nat} Q_n = \bigcap_{n \in \nat} Q_n \cap X =
  \bigcap_{n \in \nat} (\widehat U_n \cap X)$.  Hence we can refine
  Proposition~\ref{prop:ccqm:consonant} to: in an LCS-complete space
  $X$, every Scott-open family $\mathcal U$ of open subsets of $X$ is
  a union of sets $\blacksquare Q$, where the sets $Q$ are compact
  $G_\delta$, not just compact.
\end{remark}

\section{The space $\Lform X$ for $X$ LCS-complete}
\label{sec:space-lform-x}

The topological coproduct of two consonant spaces is not in general
consonant \cite[Example~6.12]{NS:consonant}, whence the need for
the following definition.  Let $n \odot X$ denote the coproduct of $n$
identical copies of $X$.
% I do not know whether,
% given a consonant space $X$, its $n$th \emph{copower} $n \odot X$ (the
% coproduct of $n$ copies of $X$) is consonant, but that seems unlikely.
\begin{definition}[$\odot$-consonant]
  \label{defn:realcons}
  A topological space $X$ is called \emph{$\odot$-consonant} if and only
  if, for every $n \in \nat$, $n \odot X$ is consonant.
\end{definition}
In particular, every $\odot$-consonant space is consonant.
\begin{lemma}
  \label{lemma:ccqm:dotconsonant}
  Every LCS-complete space is $\odot$-consonant.
\end{lemma}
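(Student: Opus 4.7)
The plan is to reduce $\odot$-consonance to plain consonance by showing that finite topological coproducts of LCS-complete spaces are themselves LCS-complete, and then invoking Proposition~\ref{prop:ccqm:consonant}.

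First, I would observe that the class of locally compact sober spaces is closed under finite topological coproducts. Indeed, if $Y$ is locally compact sober, then $n \odot Y$ is clearly locally compact (a compact saturated neighborhood base at a point in the $i$th summand is obtained by taking compact saturated neighborhoods of that point inside the $i$th summand, which remain compact saturated in the coproduct since each summand is clopen). Sobriety of $n \odot Y$ follows because every irreducible closed subset of a coproduct must lie entirely inside one summand (irreducibility forbids nontrivial clopen partitions of the irreducible set), and sobriety of that summand then supplies a unique generic point.

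Next, I would show that if $X$ is (homeomorphic to) a $G_\delta$ subset $\fcap_{k \in \nat} W_k$ of a locally compact sober space $Y$, then $n \odot X$ is a $G_\delta$ subset of $n \odot Y$. Indeed, identifying $n \odot X$ with the disjoint union of $n$ copies $X_1, \dots, X_n$ of $X$ inside the disjoint union $Y_1, \dots, Y_n$ of copies of $Y$, each set $\widehat W_k \eqdef W_k^{(1)} \sqcup \cdots \sqcup W_k^{(n)}$ is open in $n \odot Y$, and $n \odot X = \fcap_{k \in \nat} \widehat W_k$. Therefore $n \odot X$ is LCS-complete.

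Applying Proposition~\ref{prop:ccqm:consonant} to $n \odot X$ then yields consonance for every $n$, so $X$ is $\odot$-consonant. The only potential obstacle is verifying that $n \odot Y$ is genuinely locally compact sober, but since $n$ is finite and each summand is clopen in the coproduct, both local compactness and sobriety transfer straightforwardly; there is no subtlety of the sort that arises for infinite coproducts (cf.\ \cite[Example~6.12]{NS:consonant}), which explains precisely why only \emph{finite} coproducts appear in Definition~\ref{defn:realcons}.
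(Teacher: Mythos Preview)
Your proof is correct and follows essentially the same route as the paper: exhibit $n \odot X$ as a $G_\delta$ subset $\fcap_{k} (n \odot W_k)$ of the locally compact sober space $n \odot Y$, then invoke Proposition~\ref{prop:ccqm:consonant}. The only cosmetic differences are that the paper verifies local compactness of $n \odot Y$ via core-compactness (noting $\Open(n \odot Y) \cong (\Open Y)^n$ is a continuous lattice) rather than by your direct neighborhood argument, and cites \cite[Lemma~8.4.2]{JGL-topology} for sobriety of coproducts instead of arguing it from scratch.
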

\begin{proof}
  Every topological coproduct of LCS-complete spaces is LCS-complete,
  as we will see in Proposition~\ref{prop:ccqm:coprod}.  For now, let
  us just write the LCS-complete space $X$ as
  $\fcap_{m \in \nat} W_m$, where each $W_m$ is open in the locally
  compact sober space $Y$.  Then $n \odot Y$ is sober, because
  coproducts of sober spaces are sober
  \cite[Lemma~8.4.2]{JGL-topology}.  Since $\Open (n \odot Y)$ is
  isomorphic to ${(\Open Y)}^n$, it is a continuous lattice, so
  $n \odot Y$ is core-compact, hence locally compact.  Then
  $n \odot X$ arises as the $G_\delta$ subset
  $\fcap_{m \in \nat} n \odot W_m$ of $n \odot Y$.  Finally, by
  Proposition~\ref{prop:ccqm:consonant}, $n \odot X$ is consonant.
\end{proof}

Let $[X \to Y]$ denote the space of all continuous maps from $X$ to
$Y$.  A \emph{step function} $g$ from $X$ to $Y$ is a continuous map
whose image is finite.  For every $y$ in the image $\img g$ of $g$,
there is an open neighborhood $V_y$ of $y$ such that
$V_y \cap \img g = \upc y \cap \img g$, namely, that contains only the
elements from $\img g$ that are above $y$.  This is because $\upc y$
is the filtered intersection of the family ${(V_i)}_{i \in I}$ of open
neighborhoods of $y$, and ${(V_i \cap \img g)}_{i \in I}$ is filtered
and finite, hence reaches its infimum.  Then
$U_y \eqdef g^{-1} (\upc y)$ is open because it is also equal to
$g^{-1} (V_y)$.  Moreover, $g$ is the map that sends every element of
$U_y \diff \bigcup_{y' \in \img g, y<y'} U_{y'}$ to $y$.  When $Y$
also has a least element $\bot$, $g$ is then also the pointwise
supremum $\sup_{y \in \img g} U_y \searrow y$, where the elementary
step function $U_y \searrow y$ maps every element of $U_y$ to $y$ and
all other elements to $\bot$.  (The sup is always defined in this
case, whatever $Y$ is provided it has a least element.)  This
generalizes the usual notion of step function.  The following should
be familiar to domain theorists---except that the step functions we
build are not required to be way-below $f$.

A \emph{bounded} family is a set of elements that has an upper bound.
\begin{lemma}
  \label{lemma:step}
  Let $X$ be a topological space, and $Y$ be a continuous poset in
  which every finite bounded family of elements has a least upper
  bound, with its Scott topology.  Every continuous map
  $f \colon X \to Y$ is the pointwise supremum of a directed family of
  step functions.
\end{lemma}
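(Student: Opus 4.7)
The plan is to build an explicit directed family $\{g_E\}$ of step functions dominated by $f$, indexed by finite sets $E$ of pairs $(U,y)$ where $U$ is an open subset of $X$, $y\in Y$, and $U\subseteq f^{-1}(\uuarrow y)$ (equivalently, $y\ll f(x)$ for every $x\in U$), and then to show that its pointwise supremum is $f$. I will use the standard domain-theoretic recipe of elementary step functions $U\searrow y$ glued together; the twist is that we do not require the $y$'s to be way-below $f$ in any uniform sense, and $Y$ is only required to support finite bounded joins.

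First I observe that applying the finite-bounded-sup hypothesis to the empty family (which is vacuously bounded) yields a least element $\bot\in Y$. For each pair $(U,y)$ as above I let $U\searrow y$ send $U$ to $y$ and its complement to $\bot$; its continuity is immediate by inspecting the preimages of Scott-opens of $Y$, which are $X$, $U$, or $\emptyset$. For $E=\{(U_1,y_1),\dots,(U_n,y_n)\}$ I define
\[
g_E(x) \eqdef \bigvee\{y_i \mid x\in U_i\},
\]
with the convention that the empty join is $\bot$. This supremum exists because $\{y_i \mid x\in U_i\}$ is bounded above by $f(x)$, and it automatically satisfies $g_E(x)\le f(x)$. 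The image of $g_E$ lies in the finite set of bounded joins of subsets of $\{y_1,\dots,y_n\}\cup\{\bot\}$, so $g_E$ has finite image. Continuity follows from the identity
\[
g_E^{-1}(\upc z) \;=\; \bigcup_{\substack{S\subseteq\{1,\dots,n\}\\ \bigvee_{i\in S}y_i\text{ exists, }\ge z}}\ \bigcap_{i\in S} U_i,
\]
a union of finite intersections of opens; checking both inclusions uses nothing more than the definition of $g_E$ together with the fact that for any $x$ the index set $\{i\mid x\in U_i\}$ is automatically one of the admissible $S$ (its $y$'s are bounded by $f(x)$).

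Directedness is immediate since $E\cup E'$ is again a valid index and $g_{E\cup E'}\ge g_E,\, g_{E'}$ pointwise. For the supremum, fix $x_0\in X$. By continuity of $Y$, $f(x_0)=\dsup\{y\mid y\ll f(x_0)\}$, and for each such $y$ the singleton family $E=\{(f^{-1}(\uuarrow y),\,y)\}$ satisfies $g_E(x_0)=y$. Combined with $g_E\le f$, this gives $\sup_E g_E = f$ pointwise. The only nontrivial step is the continuity of $g_E$: one has to be attentive that in the displayed formula for $g_E^{-1}(\upc z)$ the indexing subsets $S$ make sense only when $\bigvee_{i\in S}y_i$ is defined (i.e.\ $\{y_i\mid i\in S\}$ is bounded), and to verify that every $x$ with $g_E(x)\ge z$ is witnessed by the particular admissible $S=\{i\mid x\in U_i\}$. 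The rest of the argument is routine bookkeeping.
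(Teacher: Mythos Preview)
Your proof is correct and takes essentially the same approach as the paper: both build step functions as finite bounded suprema of elementary step functions $U\searrow y$ with $U\subseteq f^{-1}(\uuarrow y)$, verify continuity by computing preimages of principal upper sets, and conclude via $f(x)=\dsup_{y\ll f(x)} y$. The only difference is organizational---the paper takes the family of \emph{all} step functions below $f$ and proves directedness by showing that the pointwise supremum of two step functions is again a step function, whereas you index explicitly by finite sets $E$ so that directedness is immediate from $E\cup E'$; the continuity computation is the same in both cases.
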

\begin{proof}
  In $Y$, the empty family has a least upper bound, meaning that $Y$
  has a least element $\bot$.  The constant $\bot$ map is a step
  function below $f$.  Given any two step functions $g$, $h$ below
  $f$, let $k$ map every $x \in X$ to the supremum of $g (x)$ and
  $h (x)$, which exists because the family $\{g (x), h (x)\}$ is
  bounded by $f (x)$.  The image of $k$ is clearly finite.  We claim
  that $k$ is continuous.  For every open subset $V$ of $Y$, let $D_V$
  be the set of pairs $(y_1, y_2) \in \img g \times \img h$ such that
  the supremum of $y_1$ and $y_2$ is in $V$.  This is a finite set.
  Then
  $k^{-1} (V) = \bigcup_{(y_1, y_2) \in D_V} g^{-1} (\upc y_1) \cap
  h^{-1} (\upc y_2)$ is open.  Since $k$ is continuous and $\img k$ is
  finite, $k$ is a step function.  This shows that the family
  $\mathcal D$ of step functions pointwise below $f$ is directed.

  For every $x \in X$, and every $y \ll f (x)$ in $Y$, the step
  function $f^{-1} (\uuarrow y) \searrow y$ is in $\mathcal D$, and
  its value at $x$ is $y$.  Since the supremum of all the elements $y$
  way-below $f (x)$ is $f (x)$, $\sup \{g (x) \mid g \in \mathcal D\}
  = f (x)$.
\end{proof}

Given a finite subset $B$ of $Y$, where $Y$ is a poset in which every
finite bounded family $J$ of elements has a least upper bound
$\sup J$, and given any $|B|$-tuple ${(V_y)}_{y \in B}$ of open
subsets of $X$, the notation $\sup_{y \in B} V_y \searrow y$ defines a
step function if and only if every subset $J \subseteq B$ such that
$\bigcap_{y \in J} V_y \neq \emptyset$ is bounded: in that case
$\sup_{y \in B} V_y \searrow y$ maps every point $x \in X$ to
$\sup J$, where $J \eqdef \{y \in B \mid x \in V_y\}$; otherwise, we
say that $\sup_{y \in B} V_y \searrow y$ is \emph{undefined}.
\begin{proposition}
  \label{prop:co=Scott}
  Let $X$ be a $\odot$-consonant space.  Let $Y$ be a continuous poset
  in which every finite bounded family of elements has a least upper
  bound, with its Scott topology.  The compact-open topology on
  $[X \to Y]$ is equal to the Scott topology.
  % \ForAuthors{JGL: Can we remove some assumptions on $Y$, for example the
  %   least upper bound part?}
\end{proposition}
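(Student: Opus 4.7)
The plan is to establish the two inclusions between the topologies. The inclusion of the compact-open topology in the Scott topology is routine: each subbasic compact-open set $[Q, V] = \{h \in [X \to Y] \mid h(Q) \subseteq V\}$, with $Q$ compact in $X$ and $V$ Scott-open in $Y$, is upper in the pointwise order, and inaccessible by directed suprema. Indeed, if $\{f_i\}_i$ is a directed family with pointwise sup $f \in [Q, V]$, Scott-openness of $V$ in $Y$ yields, for each $x \in Q$, an index $i$ with $f_i(x) \in V$; the open sets $f_i^{-1}(V)$ cover the compact set $Q$, and by directedness a single $f_j$ lies in $[Q, V]$.

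For the reverse inclusion, fix a Scott-open $\mathcal{U} \subseteq [X \to Y]$ and $f \in \mathcal{U}$; the goal is to produce a basic compact-open neighborhood of $f$ contained in $\mathcal{U}$. The strategy is to combine the step-function approximation of Lemma~\ref{lemma:step} with $\odot$-consonance. I first observe that the step functions $g_B := \sup_{y \in B} f^{-1}(\uuarrow y) \searrow y$, indexed by finite subsets $B \subseteq Y$, are closed under binary pointwise joins—one checks $g_{B_1} \vee g_{B_2} = g_{B_1 \cup B_2}$ using that $\{y \in B_1 \cup B_2 \mid y \ll f(x)\}$ is bounded by $f(x)$—and hence form a directed family with pointwise supremum $f$. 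Since $\mathcal{U}$ is Scott-open, some such $g := g_B$ lies in $\mathcal{U}$. Set $U_y := f^{-1}(\uuarrow y)$ for $y \in B$, and let $n := |B|$.

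Identifying $\Open(n \odot X)$ with $\Open(X)^B$, define
\[
\mathcal{W} := \left\{ (V_y)_{y \in B} \in \Open(X)^B \;\middle|\; \sup_{y \in B} (V_y \cap U_y) \searrow y \in \mathcal{U} \right\}.
\]
The step function on the right is always defined, because on $V_y \cap U_y$ one has $y \ll f(x)$, so the relevant finite index set is bounded by $f(x)$. Clearly $\mathcal{W}$ is upper and contains $(U_y)$ (whose step function is $g$). It is also inaccessible by directed joins: for $(V_y^{(i)})$ directed with pointwise union $(V_y^{(\infty)}) \in \mathcal{W}$, the associated step functions $k_i$ form a directed family in $[X \to Y]$ whose pointwise sup (hence sup in $[X \to Y]$) is the step function $k_\infty$ associated to $(V_y^{(\infty)})$—this is where the finiteness of $B$ enters, ensuring that at each $x$ the set $\{y \in B \mid x \in V_y^{(i)} \cap U_y\}$ stabilizes in $i$ to $\{y \in B \mid x \in V_y^{(\infty)} \cap U_y\}$. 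Since $X$ is $\odot$-consonant, $n \odot X$ is consonant, and applying consonance to $\mathcal{W}$ at $(U_y)$ produces a compact saturated $Q \subseteq n \odot X$ with $(U_y) \in \blacksquare Q \subseteq \mathcal{W}$; writing $Q = \bigsqcup_{y \in B} Q_y$, each $Q_y$ is compact saturated in $X$ and contained in $U_y = f^{-1}(\uuarrow y)$.

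The claim that closes the argument is that $\mathcal{O} := \bigcap_{y \in B} [Q_y, \uuarrow y]$ is a compact-open neighborhood of $f$ contained in $\mathcal{U}$. The containment $f \in \mathcal{O}$ is immediate from $Q_y \subseteq f^{-1}(\uuarrow y)$. For any $h \in \mathcal{O}$, setting $V_y := h^{-1}(\uuarrow y) \supseteq Q_y$ places $(V_y)$ in $\blacksquare Q \subseteq \mathcal{W}$, so $k_h := \sup_{y \in B} (V_y \cap U_y) \searrow y$ lies in $\mathcal{U}$; each $y$ contributing to $k_h(x)$ satisfies $y \ll h(x)$, so $k_h \leq h$ pointwise and upperness of $\mathcal{U}$ yields $h \in \mathcal{U}$. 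The main obstacle is the verification of directed-inaccessibility of $\mathcal{W}$ and the justification that $k_\infty$ is really the sup in $[X \to Y]$ of the $k_i$'s; the closure of the step-function family under binary joins, needed to pin down $g$ in the tractable form with $U_y = f^{-1}(\uuarrow y)$, is a secondary technical point.
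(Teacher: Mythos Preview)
Your argument is correct and follows the same overall strategy as the paper---approximate $f$ from below by a step function inside the Scott-open set, encode a Scott-open family of tuples of opens in $X$, apply $\odot$-consonance to extract compact sets, and conclude with the corresponding compact-open basic set.  The implementation, however, is a genuine streamlining of the paper's.  The paper first finds an arbitrary step function $g_0=\sup_{y\in B}U_y\searrow y$ in $\mathcal U$, then performs a \emph{second} approximation, replacing each $y$ by some $z_y\ll y$ so that $g=\sup_{y\in B}U_y\searrow z_y$ is still in $\mathcal U$; its Scott-open family $\mathcal V$ must then allow ``undefined'' tuples (those for which some unbounded $J\subseteq B$ has $\bigcap_{y\in J}V_y\neq\emptyset$), and the final basic open uses $\uuarrow z_y$.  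By fixing $U_y=f^{-1}(\uuarrow y)$ from the start and intersecting each $V_y$ with $U_y$ in the definition of $\mathcal W$, you force every relevant step function to be bounded by $f$, so definedness is automatic and no passage to $z_y\ll y$ is needed: you can work with $\uuarrow y$ directly.  The price is the small extra verification that the specific family $\{g_B\}_B$ is directed with supremum $f$, which you handle via $g_{B_1}\vee g_{B_2}=g_{B_1\cup B_2}$.  Both routes rest on the same two ingredients (Lemma~\ref{lemma:step} and consonance of $|B|\odot X$); yours avoids one layer of approximation and the bookkeeping of undefined step functions.
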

\begin{proof}
  The compact-open topology has subbasic open sets
  $[Q \subseteq V] \eqdef \{f \in [X \to Y] \mid Q \subseteq f^{-1}
  (V)\}$, where $Q$ is compact saturated in $X$ and $V$ is open in
  $Y$.  It is easy to see that $[Q \subseteq V]$ is Scott-open.  In
  the converse direction, let $\mathcal W$ be a Scott-open subset of
  $[X \to Y]$, and $f \in \mathcal W$.  Our task is to find an open
  neighborhood of $f$ in the compact-open topology that is included in
  $\mathcal W$.

  The function $f$ is the pointwise supremum of a directed family of
  step functions, by Lemma~\ref{lemma:step}, hence one of them, say
  $g_0$, is in $\mathcal W$.  We can write $g_0$ as
  $g_0 \eqdef \sup_{y \in B} U_y \searrow y$, with $B$ finite, and
  where each $U_y$ is open.

  Consider the maps $\sup_{y \in B} U_y \searrow z_y$, where
  $z_y \ll y$ for each $y \in B$.  Those maps are defined: for every
  $J \subseteq B$ such that $\bigcap_{y \in J} U_y \neq \emptyset$,
  $\sup J$ exists and is an upper bound of $\{z_y \mid y \in J\}$.
  Explicitly, those maps $\sup_{y \in B} U_y \searrow z_y$ send each
  $x \in X$ to $\sup \{z_y \mid y \in J\}$, where
  $J \eqdef \{y \in B \mid x \in U_y\}$.  Those maps
  form a directed family whose supremum is $g_0$, hence one of them,
  say $g \eqdef \sup_{y \in B} U_y \searrow z_y$, is in $\mathcal W$.

  Let $G$ be the set of subsets $J$ of $B$ such that
  $Z_J \eqdef \{z_y \mid y \in J\}$ is bounded.  For each $J \in G$,
  $Z_J$ has a least upper bound $\sup Z_J$, by assumption.
  % Given any $|B|$-tuple ${(V_y)}_{y \in B}$ of open subsets of $X$, we
  % say that $\sup_{y \in B} V_y \searrow y$ is \emph{defined} if and
  % only if every subset $J$ of $B$ such that $\bigcap_{y \in J} V_y
  % \neq \emptyset$ belongs to $G$; then it is defined as the map that
  % sends every $x \in X$ to $\sup J$ where $J \eqdef \{y \in B \mid x
  % \in V_y\}$; otherwise, $\sup_{y \in B} V_y \searrow y$ is \emph{undefined}.
  Let $\mathcal V$ be the set of $|B|$-tuples ${(V_y)}_{y \in B}$ of
  open subsets of $X$ such that $\sup_{y \in B} V_y \searrow z_y$ is
  undefined or in $\mathcal W$.  Ordering those tuples by
  componentwise inclusion, we claim that $\mathcal V$ is Scott-open.

  We first check that $\mathcal V$ is upwards-closed.  Let
  ${(V_y)}_{y \in B}$ be an element of $\mathcal V$, and
  ${(V'_y)}_{y \in B}$ be a family of open sets such that
  $V_y \subseteq V'_y$ for every $y \in B$.  If
  $\sup_{y \in B} V_y \searrow z_y$ is undefined, then there is a
  subset $J$ of $B$, not in $G$, and such that
  $\bigcap_{y \in J} V_y \neq \emptyset$.  Then
  $\bigcap_{y \in J} V'_y$ is non-empty as well, so
  $\sup_{y \in B} V'_y \searrow z_y$ is undefined, too.  If
  $\sup_{y \in B} V_y \searrow z_y$ is defined, then either
  $\sup_{y \in B} V'_y \searrow z_y$ is undefined, or
  $\sup_{y \in B} V_y \searrow z_y \leq \sup_{y \in B} V'_y \searrow
  z_y$.  In both cases, ${(V'_y)}_{y \in B}$ is in $\mathcal V$.

  Next, let ${(V_y)}_{y \in B}$ be a $|B|$-tuple of open subsets of
  $X$, let $I$ be some indexing set and assume that for every
  $y \in B$, $V_y = \dcup_{i \in I} V_{yi}$, where each $V_{yi}$ is
  open.  If $\sup_{y \in B} V_y \searrow z_y$ is undefined, then there
  is a subset $J$ of $B$, not in $G$, and such that
  $\bigcap_{y \in J} V_y \neq \emptyset$.  We pick an element $x$ from
  $\bigcap_{y \in J} V_y$.  For each $y \in B$, there is an index
  $i \in I$ such that $x \in V_{yi}$, and we can take the same $i$ for
  every $y \in B$ by directedness.  Then
  $\sup_{y \in B} V_{yi} \searrow z_y$ is undefined, hence
  ${(V_{yi})}_{y \in B}$ is in $\mathcal V$.  If instead
  $\sup_{y \in B} V_y \searrow z_y$ is defined, then every map
  $\sup_{y \in B} V_{yi} \searrow z_y$, $i \in I$, is defined, too.
  We claim that
  $\sup_{y \in B} V_y \searrow z_y = \dsup_{i \in I} (\sup_{y \in B}
  V_{yi} \searrow z_y)$.  We fix $x \in X$, and we let
  $J \eqdef \{y \in B \mid x \in V_y\}$.  For every $y \in J$, $x$ is
  in $V_y = \dsup_{i \in I} V_{yi}$ so $x \in V_{yi}$ for some
  $i \in I$.  By directedness, we can choose the same $i$ for every
  $y \in J$.  It follows that
  $(\sup_{y \in B} V_{yi} \searrow z_y) (x) = \sup Z_J = (\sup_{y \in
    B} V_y \searrow z_y) (x)$.  This shows the claim.  Now that we
  know that
  $\sup_{y \in B} V_y \searrow z_y = \dsup_{i \in I} (\sup_{y \in B}
  V_{yi} \searrow z_y)$, and since that is in the Scott-open set
  $\mathcal W$, $\sup_{y \in B} V_{yi} \searrow z_y$ is in
  $\mathcal W$ for some $i \in I$, in particular
  ${(V_{yi})}_{y \in B}$ is in $\mathcal V$.

  We know that $\mathcal V$ is Scott-open.  Moreover, and recalling
  that $g = \sup_{y \in B} U_y \searrow z_y$ is in $\mathcal W$, the
  $|B|$-tuple ${(U_y)}_{y \in B}$ is in $\mathcal V$.  We may equate
  $|B|$-tuples of open subsets with open subsets of $|B| \odot X$, and
  then the compact saturated subsets of $|B| \odot X$ are naturally
  equated with $|B|$-tuples of compact saturated subsets of $X$.
  Since $X$ is $\odot$-consonant, there is a $|B|$-tuple of compact
  saturated subsets $Q_y$, $y \in B$, such that $Q_y \subseteq U_y$
  for every $y \in B$ and such that every $|B|$-tuple ${(V_y)}_{y \in
    B}$ of open sets such that $Q_y \subseteq V_y$ for every $y \in B$
  is in $\mathcal V$.

  Let us consider the compact-open open subset
  $\mathcal W' \eqdef \bigcap_{y \in B} [Q_y \subseteq \uuarrow z_y]$.
  Since $Q_y \subseteq U_y$ for every $y \in B$, $f$ is in $\mathcal
  W'$: for every $y \in B$, for every $x \in Q_y$, $x$ is in $U_y$, so
  $f (x)$, which is larger than or equal to $g_0 (x)$, hence to $y$, is in $\uuarrow
  z_y$.  We claim that $\mathcal W'$ is included in $\mathcal W$.   Let
  $h$ be any element of $\mathcal W'$.  For every $y \in B$, let $V_y
  \eqdef h^{-1} (\uuarrow z_y)$.  Since $h \in [Q_y \subseteq \uuarrow
  z_y]$, $Q_y \subseteq V_y$, so ${(V_y)}_{y \in B}$ is in $\mathcal
  V$, meaning that $\sup_{y \in B} V_y \searrow z_y$ is
  undefined or in $\mathcal W$.  But it cannot be undefined:
  for every $x \in X$, letting $J \eqdef \{y \in B \mid x \in V_y\}$,
  $h (x)$ is an upper bound of $\{z_y \mid y \in J\}$, by the
  definition of $V_y$.  The same argument shows that $\sup_{y \in B}
  V_y \searrow z_y \leq h$.  Since $\sup_{y \in B} V_y \searrow z_y$
  is in $\mathcal W$ and $\mathcal W$ is upwards-closed, $h$ is also
  in $\mathcal W$.
\end{proof}

Let $\Lform X$ denote the space of all continuous maps from $X$ to
$\creals$, the set of extended non-negative real numbers under the
Scott topology.  Those are usually known as the \emph{lower
  semicontinuous} maps from $X$ to $\creal$.  $Y \eqdef \creals$
certainly satisfies the assumptions of
Proposition~\ref{prop:co=Scott}.  Hence:
\begin{corollary}
  \label{corl:co=Scott}
  Let $X$ be a $\odot$-consonant space, for example an LCS-complete
  space.  The compact-open topology on $\Lform X$ is equal to the
  Scott topology on $\Lform X$.  \qed
\end{corollary}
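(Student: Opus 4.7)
The plan is to deduce this corollary as an immediate instance of Proposition~\ref{prop:co=Scott}, taking $Y \eqdef \creals$. To that end, I would first verify that $\creals$ meets the hypotheses on the codomain imposed by that proposition. Namely, $\creal$ is a continuous (in fact completely distributive) complete lattice, so it is a continuous poset in which every finite bounded family---indeed every subset whatsoever---has a least upper bound; and by the very definition of $\creals$, we equip it with the Scott topology. Thus all three assumptions on $Y$ are met without effort.

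Since by definition $\Lform X = [X \to \creals]$, applying Proposition~\ref{prop:co=Scott} then yields that the compact-open topology on $\Lform X$ coincides with the Scott topology, which is precisely the statement. For the parenthetical ``for example an LCS-complete space'', I would just cite Lemma~\ref{lemma:ccqm:dotconsonant}, which states that every LCS-complete space is $\odot$-consonant, so the conclusion specializes to LCS-complete spaces in particular.

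There is no real obstacle here: the work has been absorbed into the earlier proposition and lemma, and the only thing to do is to check the trivially satisfied hypotheses on the codomain $\creals$ and invoke the proposition. Consequently the proof is essentially one line, which matches the \texttt{\textbackslash qed} already placed at the end of the statement.
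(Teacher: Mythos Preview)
Your proposal is correct and matches the paper's approach exactly: the paper simply remarks that $Y \eqdef \creals$ satisfies the hypotheses of Proposition~\ref{prop:co=Scott} and concludes, with the LCS-complete case covered by Lemma~\ref{lemma:ccqm:dotconsonant}. There is nothing to add.
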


As an application, let us consider Theorem~4.11 of \cite{JGL-mscs16}.
(We will give another application in
Section~\ref{sec:fail-cart-clos}.)  This expresses a homeomorphism
between two kinds of objects.  The first one is the space
$\Prev_{\AN} (X)$ of \emph{sublinear previsions} on $X$, namely
Scott-continuous sublinear maps $F$ from $\Lform X$ to $\creals$,
where sublinear means that $F (ah) = a F (h)$ and
$F (h+h') \leq F (h) + F (h')$ for all $a \in \Rplus$,
$h, h' \in \Lform X$.  $\Prev_{\AN} (X)$ is equipped with the
\emph{weak topology}, whose subbasic open sets are
$[h > r] \eqdef \{F \in \Prev_{\AN} (X) \mid F (h) > r\}$,
$h \in \Lform X$, $r \in \Rplus$.  The second one is
$\Hoare^{cvx} (\Val_\wk (X))$, where $\Val_\wk (X)$ is the space of
continuous valuations on $X$ \cite{jones89,Jones:proba} (more details
in Section~\ref{sec:extens-cont-valu}), or equivalently the space of
\emph{linear} previsions (defined as sublinear previsions, except that
$F (h+h') = F (h) + F (h')$ replaces the inequality
$F (h+h') \leq F (h) + F (h')$), $\Hoare (Y)$ is the space of
non-empty closed subsets of $Y$ with the lower Vietoris topology, and
$\Hoare^{cvx} (Y)$ is the subspace of $\Hoare (Y)$ consisting of its
convex sets.  The already cited Theorem~4.11 of \cite{JGL-mscs16}
states that $\Prev_{\AN} (X)$ and $\Hoare^{cvx} (\Val_\wk (X))$ are
homeomorphic if $\Lform X$ is \emph{locally convex} in its Scott
topology, meaning that every element of $\Lform X$ has a base of
convex open neighborhoods.  The homeomorphism is given by
$r_{\AN} \colon \Hoare^{cvx} (\Val_\wk (X)) \mapsto \Prev_{\AN} (X)$,
$r_{\AN} (C) (h) \eqdef \sup_{\nu \in C} \int_{x \in X} h (x) d\nu$,
and $s_{\AN} \colon \Prev_{\AN} (X) \to \Hoare^{cvx} (\Val_\wk (X))$,
$s_{\AN} (F) \eqdef \{\nu \in \Val_\wk (X) \mid \forall h \in \Lform
X, \int_{x \in X} h (x) d\nu \leq F (h)\}$.  The primary case when
those form a homeomorphism is when $X$ is core-compact.  We have a
second class of spaces where that holds:
\begin{lemma}
  \label{lemma:locconvex}
  For every $\odot$-consonant space, for example an LCS-complete
  space, $\Lform X$ is locally convex in its Scott topology.
\end{lemma}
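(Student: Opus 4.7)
The plan is to invoke Corollary~\ref{corl:co=Scott}, which, for $X$ a $\odot$-consonant space, identifies the Scott topology on $\Lform X$ with the compact-open topology. It then suffices to exhibit, at each point of $\Lform X$, a neighborhood base in the compact-open topology consisting of convex open sets.

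First I would note that the Scott-open subsets of $\creals$ are exactly $\emptyset$, the whole space $\creals$, and the upper sets $(r, \infty]$ for $r \in \Rplus$. Consequently the compact-open topology on $\Lform X$ admits as a subbase the sets $[Q > r] \eqdef \{h \in \Lform X \mid h(x) > r \text{ for every } x \in Q\}$, with $Q$ a compact saturated subset of $X$ and $r \in \Rplus$.

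The key observation is that each subbasic open $[Q > r]$ is convex: given $h_1, h_2 \in [Q > r]$ and $\alpha \in [0,1]$, for every $x \in Q$ we have $\alpha h_1(x) + (1-\alpha) h_2(x) > \alpha r + (1-\alpha) r = r$, so $\alpha h_1 + (1-\alpha) h_2 \in [Q > r]$. Since convexity is preserved under intersection, every finite intersection of such subbasic opens---that is, every basic open set---is convex, and therefore each $f \in \Lform X$ admits a base of convex open neighborhoods.

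Because the nontrivial topological work has already been carried out in Proposition~\ref{prop:co=Scott} (and hence in Corollary~\ref{corl:co=Scott}), essentially no obstacle remains: the only care needed is to verify that the Scott-open sets of $\creals$ take the one-parameter form $(r, \infty]$ together with the trivial cases, which is immediate from the fact that $\creals$ is a totally ordered dcpo, and to notice that this forces the natural subbase of the compact-open topology to be built from the manifestly convex sets $[Q > r]$.
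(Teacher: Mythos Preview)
Your proposal is correct and follows exactly the same approach as the paper: invoke Corollary~\ref{corl:co=Scott} to reduce to the compact-open topology, then observe that the basic opens $\bigcap_{i=1}^n [Q_i \subseteq (a_i,\infty]]$ (your $[Q>r]$) are convex. The paper merely calls this last step ``routine''; you have spelled it out.
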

\begin{proof}
  By Corollary~\ref{corl:co=Scott}, it suffices to show that it is
  locally convex in its compact-open topology, namely that every
  element of $\Lform X$ has a base of convex open neighborhoods.  It
  is routine to show that every basic open $\bigcap_{i=1}^n [Q_i
  \subseteq (a_i, \infty]]$ is convex.
\end{proof}
\begin{corollary}
  \label{corl:homeo:prev}
  For every LCS-complete space, the maps $s_{\AN}$ and $r_{\AN}$
  define a homeomorphism between $\Prev_{\AN} (X)$ and
  $\Hoare^{cvx} (\Val_\wk (X))$.
\end{corollary}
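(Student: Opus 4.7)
The plan is to reduce this corollary to the combination of Lemma~\ref{lemma:locconvex} and the already-cited Theorem~4.11 of \cite{JGL-mscs16}. The latter states that $s_{\AN}$ and $r_{\AN}$ form mutually inverse homeomorphisms between $\Prev_{\AN}(X)$ and $\Hoare^{cvx}(\Val_\wk(X))$ whenever $\Lform X$ is locally convex in its Scott topology, so everything hinges on verifying that hypothesis.

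Given an LCS-complete space $X$, Lemma~\ref{lemma:ccqm:dotconsonant} tells us that $X$ is $\odot$-consonant. Then Lemma~\ref{lemma:locconvex}, applied to $X$, yields local convexity of $\Lform X$ in its Scott topology. The cited Theorem~4.11 of \cite{JGL-mscs16} then applies directly and produces the desired homeomorphism via the explicit formulas for $r_{\AN}$ and $s_{\AN}$ recalled above. There is no real obstacle here; the entire content of the corollary is the observation that LCS-completeness is strong enough to feed the local convexity hypothesis required by the isomorphism theorem, which had previously only been available for core-compact spaces.
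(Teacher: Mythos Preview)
Your proposal is correct and matches the paper's approach exactly: the corollary is stated without proof in the paper because it follows immediately from Lemma~\ref{lemma:locconvex} together with the cited Theorem~4.11 of \cite{JGL-mscs16}. Your separate invocation of Lemma~\ref{lemma:ccqm:dotconsonant} is harmless but redundant, since Lemma~\ref{lemma:locconvex} already explicitly includes LCS-complete spaces among the $\odot$-consonant ones.
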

This holds in particular for all continuous complete quasi-metric
spaces in their $d$-Scott topology, in particular for all complete
metric spaces in their open ball topology.

\section{Categorical limits}
\label{sec:limits}

\begin{lemma}
  \label{lemma:ccqm:compact}
  Every domain-complete space is a $G_\delta$ subset of a pointed
  continuous dcpo.  Every LCS-complete space is a $G_\delta$ subset of
  a compact, locally compact and sober space.
\end{lemma}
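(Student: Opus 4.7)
The plan is to exhibit, for each of the two cases, a suitable ``compactification'' of the ambient space by freely adjoining a new least element $\bot$ below every existing point.

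For the first statement, let $X \cong \fcap_{n \in \nat} W_n$ be a domain-complete space, where each $W_n$ is Scott-open in a continuous dcpo $Y$. I would form $Y_\bot \eqdef Y \cup \{\bot\}$ with $\bot$ placed below every element of $Y$, and equip it with the Scott topology of this extended order. First I would verify that $Y_\bot$ is a pointed continuous dcpo: directed suprema are either $\bot$ (when the directed set is $\{\bot\}$) or computed as in $Y$, and the way-below relation agrees with that of $Y$ on $Y$ and additionally satisfies $\bot \ll y$ for every $y$, so each element is the directed supremum of the elements way-below it. Next I would observe that each $W_n$ remains Scott-open in $Y_\bot$ (it is upward closed since $\bot \notin Y$, and a directed supremum landing in $W_n$ is never $\bot$, so some element of the directed set lies in $W_n$). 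Hence $X = \fcap_n W_n$ is a $G_\delta$ subset of $Y_\bot$ with the correct subspace topology, the only new open of $Y_\bot$ being $Y_\bot$ itself.

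For the second statement, I would use the same underlying set $Y_\bot = Y \cup \{\bot\}$, but now equip it with the topology whose opens are the opens of $Y$ together with the whole $Y_\bot$; equivalently, $\bot$ is the least element of the specialization preorder, and its only open neighborhood is $Y_\bot$. Compactness is immediate, since any open cover must contain an open containing $\bot$, forcing that open to be $Y_\bot$. For local compactness, the singleton family $\{Y_\bot\}$ is a base of compact saturated neighborhoods of $\bot$; at points $y \in Y$ the compact saturated neighborhoods of $y$ inherited from $Y$ still form a neighborhood base, since opens of $Y_\bot$ contained in subsets of $Y$ are precisely opens of $Y$, so compactness, saturation, and interior are unaffected. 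Each $W_n$ is open in $Y_\bot$, so $X = \fcap_n W_n$ is again a $G_\delta$ subset of $Y_\bot$ with the original subspace topology.

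The step requiring the most care is sobriety of $Y_\bot$ in the second case. Here I would compute that the closed subsets of $Y_\bot$ are exactly $\emptyset$ together with the sets $\{\bot\} \cup C$ for $C$ closed in $Y$. A non-empty closed set of this form is irreducible iff it is $\{\bot\}$ (the closure of $\bot$) or it equals $\{\bot\} \cup C$ with $C$ a non-empty irreducible closed subset of $Y$; in the latter case, sobriety of $Y$ provides a unique $y \in Y$ with $C = \overline{\{y\}}$, and then $\{\bot\} \cup C$ is the closure of $\{y\}$ in $Y_\bot$, with the same uniqueness. This yields sobriety of $Y_\bot$ and completes the argument.
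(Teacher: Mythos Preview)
Your proposal is correct and follows essentially the same approach as the paper: both adjoin a fresh bottom element $\bot$ to the ambient space $Y$ and verify that the lifted space $Y_\bot$ retains the relevant properties (pointed continuous dcpo, resp.\ compact locally compact sober) while the original open sets $W_n$ stay open. The paper simply cites standard references for local compactness and sobriety of $Y_\bot$, whereas you spell these out directly; your sobriety argument via the classification of closed subsets of $Y_\bot$ is fine.
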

\begin{proof}
  Let $X$ be the intersection $\bigcap_{n \in \nat} W_n$ of a
  descending family of open subsets of $Y$.  We define the lifting
  $Y_\bot$ of $Y$ as $Y$ plus a fresh element $\bot$ below all others
  (when $Y$ is a dcpo), or as $Y$ plus a fresh element, with open sets
  those of $Y$ plus $Y_\bot$ itself (if $Y$ is a topological space).
  If $Y$ is a continuous dcpo, then so is $Y_\bot$ (it is easy to see
  that $x$ is way-below $y$ in $Y_\bot$ if and only if it is in $Y$,
  or $x=\bot$), and $Y_\bot$ is pointed; if $Y$ is locally compact
  then so is $Y_\bot$ \cite[Exercise~4.8.6]{JGL-topology}; and if $Y$
  is sober then so is $Y_\bot$ \cite[Exercise~8.2.9]{JGL-topology};
  and $Y_\bot$ is compact.  Every open subset of $Y$ is open in
  $Y_\bot$.  Therefore $X$ is the $G_\delta$ subset
  $\bigcap_{n \in \nat} W_n$ of $Y_\bot$.
\end{proof}

\begin{proposition}
  \label{prop:ccqm:prod}
  The topological product of a countable family of domain-complete
  (resp., LCS-complete) spaces is domain-complete (resp.,
  LCS-complete).
\end{proposition}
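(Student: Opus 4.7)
The plan is to handle both statements in parallel by reducing to the case of ``nice'' factors via Lemma~\ref{lemma:ccqm:compact}, and then showing that countable products preserve the ambient classes (pointed continuous dcpos; compact locally compact sober spaces).

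Let ${(X_i)}_{i \in \nat}$ be a countable family of domain-complete (resp., LCS-complete) spaces. By Lemma~\ref{lemma:ccqm:compact}, we may write each $X_i$ as a $G_\delta$ subset $\fcap_{n \in \nat} W_{in}$ of a pointed continuous dcpo $Y_i$ (resp., of a compact, locally compact, sober space $Y_i$). First I would form $Y \eqdef \prod_{i \in \nat} Y_i$, equipped with the product topology, and argue that $\prod_{i \in \nat} X_i$ is a $G_\delta$ subset of $Y$: it is $\bigcap_{i,n \in \nat} \pi_i^{-1} (W_{in})$, a countable intersection of open subsets, and this set agrees (as a topological space) with the product space $\prod_{i \in \nat} X_i$ because the subspace topology from a product topology is the product of the subspace topologies.

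It therefore remains to show that $Y$ belongs to the right ambient class. In the LCS-complete case, $Y$ is compact by Tychonoff, sober (a countable product of sober spaces is sober, see \cite[Lemma~8.4.9]{JGL-topology}), and locally compact: given a point $\vec y = {(y_i)}_i$ and a basic open neighborhood $\prod_i U_i$ (with $U_i = Y_i$ for $i > N$), choose a compact saturated neighborhood $Q_i \subseteq U_i$ of $y_i$ for $i \leq N$ using local compactness of $Y_i$, and set $Q_i \eqdef Y_i$ for $i > N$; then $\prod_i Q_i$ is compact (Tychonoff), saturated (the specialization preorder on a product is the product preorder, so a product of upwards-closed sets is upwards-closed), and a neighborhood of $\vec y$ inside $\prod_i U_i$. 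In the domain-complete case, $Y$ is a dcpo under the componentwise order, and pointedness of each $Y_i$ makes it a continuous dcpo with way-below relation ${(x_i)}_i \ll {(y_i)}_i$ iff $x_i \ll y_i$ in $Y_i$ for all $i$ and $x_i = \bot_i$ for all but finitely many $i$; a typical interpolant below ${(y_i)}_i$ is obtained by picking $x_i \ll y_i$ for $i \leq N$ and $x_i \eqdef \bot_i$ for $i > N$.

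The main subtlety I expect is to verify, in the domain-complete case, that the product topology on $Y$ coincides with the Scott topology on the product dcpo; otherwise $\prod_i X_i$ with the product topology is not literally the subspace topology from a continuous dcpo with its Scott topology. One inclusion is automatic (projections are Scott-continuous, so product-open sets are Scott-open). For the other, I would use the explicit description of $\ll$ above: any Scott-open neighborhood of a point $\vec y$ contains some $\uuarrow \vec x$ with $\vec x \ll \vec y$, and since $x_i = \bot_i$ for $i$ large, $\uuarrow \vec x = \prod_{i \leq N} \uuarrow x_i \times \prod_{i > N} Y_i$ is a basic product-open set. This hinges on countability of the index set, which is precisely the hypothesis.

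Once the two ambient-class claims are established, combining them with the $G_\delta$ description gives that $\prod_{i \in \nat} X_i$ is domain-complete (resp., LCS-complete), completing the proof.
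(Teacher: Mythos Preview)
Your proposal is correct and follows essentially the same route as the paper: reduce via Lemma~\ref{lemma:ccqm:compact} to pointed (resp.\ compact) factors, verify that the product $Y$ lies in the appropriate ambient class, and exhibit $\prod_i X_i$ as a $G_\delta$ subset of $Y$. The paper outsources the ambient-class verifications to \cite[Proposition~5.1.56, Proposition~4.8.10, Theorem~8.4.8]{JGL-topology} where you spell out the arguments, and writes the $G_\delta$ as a Cantor-pairing style intersection rather than your $\bigcap_{i,n} \pi_i^{-1}(W_{in})$, but these are cosmetic differences.
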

\begin{proof}
  For each $i \in \nat$, let $X_i$ be the intersection
  $\bigcap_{n \in \nat} W_{in}$ of a descending family of open subsets
  of a continuous dcpo $Y_i$.  We may assume that $Y_i$ is pointed,
  too, by Lemma~\ref{lemma:ccqm:compact}.  The product of pointed
  continuous dcpos is a continuous dcpo, and the Scott topology on the
  product is the product topology
  \cite[Proposition~5.1.56]{JGL-topology}.  Then the topological
  product $\prod_{i \in I} X_i$ arises as the $G_\delta$ subset
  $\bigcap_{n \in \nat} \left(\prod_{i=0}^n W_{i(n-i)} \times
    \prod_{i=n+1}^{+\infty} Y_i\right)$ of $\prod_{i \in I} Y_i$.

  We use a similar argument when each $Y_i$ is locally compact and
  sober instead.  By Lemma~\ref{lemma:ccqm:compact}, we may assume
  that $Y_i$ is compact.  Every product of a family of compact,
  locally compact spaces is (compact and) locally compact
  \cite[Proposition~4.8.10]{JGL-topology}, and every product of sober
  spaces is sober \cite[Theorem~8.4.8]{JGL-topology}.
\end{proof}

\begin{proposition}
  \label{prop:ccqm:eq:no}
  The categories of domain-complete, resp.\ LCS-complete spaces, do
  not have equalizers.
\end{proposition}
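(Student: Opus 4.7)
The plan is to exhibit two continuous maps $f,g\colon X\to Y$ between spaces that are simultaneously domain-complete and LCS-complete, such that the equalizer in $\Topcat$ is $\rat$ with its usual subspace topology from $\real$, and then show that no LCS-complete space can serve as a categorical equalizer in either subcategory.

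Take $X\eqdef\real$, which is domain-complete as a Polish space (Corollary~\ref{corl:ccqm:metric}), and $Y\eqdef\Sierp^{\real\diff\rat}$, where $\Sierp$ is the Sierpi\'nski space. The power $\Sierp^I$ is an algebraic complete lattice whose Scott topology coincides with the product topology for every index set $I$, so $Y$ is a continuous dcpo and hence domain-complete. For each irrational $r$, let $f_r\colon\real\to\Sierp$ be constant at $\top$, and define $g_r\colon\real\to\Sierp$ by $g_r^{-1}(\{\top\})\eqdef\real\diff\{r\}$. Then $f\eqdef(f_r)_r$ and $g\eqdef(g_r)_r$ are continuous into $Y$, and the $\Topcat$-equalizer is $\bigcap_{r\in\real\diff\rat}(\real\diff\{r\})=\rat$ with its subspace topology $\tau_{\mathrm{sub}}$.

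Suppose for contradiction that $(E,e)$ is an equalizer of $f$ and $g$ in the category of LCS-complete (resp.\ domain-complete) spaces. For each $q\in\rat$, the singleton $\{q\}$ is LCS-complete and its inclusion into $\real$ equalizes $f$ and $g$, so it factors through $e$ by the universal property; hence $e(E)=\rat$. Equalizers are monic, and monos in the subcategory are injective (again by probing with singletons), so we may identify $E$ set-theoretically with $\rat$, with $e$ the inclusion, and the topology $\tau_E$ of $E$ refines $\tau_{\mathrm{sub}}$.

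The key step is to establish the reverse inclusion $\tau_E\subseteq\tau_{\mathrm{sub}}$. For this I probe $E$ with $Z\eqdef\nat\cup\{\infty\}$, the one-point compactification of discrete $\nat$, which is compact metric and hence domain-complete. For every sequence $(q_n)_{n\in\nat}$ in $\rat$ that converges to some $q\in\rat$ in $\tau_{\mathrm{sub}}$, the map $h\colon Z\to\real$ with $h(n)\eqdef q_n$ and $h(\infty)\eqdef q$ is continuous with image in $\rat$, and the universal property lifts it to a continuous map $Z\to E$, so $q_n\to q$ in $\tau_E$. Consequently every $\tau_E$-closed subset is sequentially closed with respect to $\tau_{\mathrm{sub}}$-convergence; since $\tau_{\mathrm{sub}}$ is metrizable and hence sequential, this forces $\tau_E\subseteq\tau_{\mathrm{sub}}$, so $\tau_E=\tau_{\mathrm{sub}}$, and $E$ is $\rat$ with its subspace topology. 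But by Remark~\ref{rem:Q} this space is not Choquet-complete, hence not LCS-complete by Proposition~\ref{prop:LCS:Choquet}, a contradiction. The main obstacle is precisely this final step: starting only from the a priori information that $\tau_E$ refines $\tau_{\mathrm{sub}}$, one must use the metrizability of $\tau_{\mathrm{sub}}$ together with a judiciously chosen LCS-complete probe space to squeeze $\tau_E$ back down to $\tau_{\mathrm{sub}}$.
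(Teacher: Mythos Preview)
Your proof is correct and follows essentially the same approach as the paper: the paper also takes $X=\real$, uses $Y=\pow(\real)$ (which is isomorphic to your $\Sierp^{\real}$, differing from your $\Sierp^{\real\diff\rat}$ only cosmetically), builds two maps whose $\Topcat$-equalizer is $\rat$, probes the hypothetical categorical equalizer with singletons and with $\nat_\infty$, and uses the metrizability of $\rat$ to force the topology to be the standard one. Your presentation of the final step via sequential spaces is slightly more compact than the paper's explicit closure argument, but the underlying idea is identical.
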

\begin{proof}
  Let $X \eqdef \real$, with its usual topology, and
  $Y \eqdef \pow (\real)$, with the Scott topology of inclusion.
  Those are domain-complete spaces.  Define $f, g \colon X \to Y$ by
  $f (x) = (\real \diff \{x\}) \cup \rat$ and $g (x) \eqdef \real$.
  Those are continuous maps: in the case of $f$, this is because
  $f^{-1} (\upc A)$, for every finite $A \subseteq \real$, is the
  complement of the finite set $A \diff \rat$.  The equalizer of $f$
  and $g$ in $\Topcat$ is $\rat$, which is not LCS-complete
  (Remark~\ref{rem:Q}).  That is not enough to show that $f$ and $g$
  do not have an equalizer in the category of LCS-complete (resp.,
  domain-complete) spaces, hence we argue as follows.

  Assume $f$ and $g$ have an equalizer $i \colon Z \to X$ in the
  category of LCS-complete spaces, resp.\ of domain-complete spaces.
  For every $z \in Z$, $f (i (z))=g (i (z))$, so $i (z) \in \rat$.
  Since $i$ is a (regular) mono, and the one-point space $\{*\}$ is
  domain-complete, $i$ is injective: any two distinct points in $Z$
  define two distinct morphisms from $\{*\}$ to $Z$, whose
  compositions with $i$ must be distinct.  If there is a rational
  point $q$ that is not in the image of $i$, then the inclusion map
  $j \colon \{q\} \to X$ is continuous, $\{q\}$ is domain complete,
  $f \circ j = g \circ j$ since $q$ is rational, but $j$ does not
  factor through $i$: contradiction.  Hence the image of $i$ is
  exactly $\rat$.  This allows us to equate $Z$ with $\rat$, with some
  topology, and $i$ with the inclusion map.  Since $i$ is continuous,
  the topology on $Z$ is finer than the usual topology on $\rat$---the
  subspace topology from $\real$.

  We claim that the topology of $Z$ is exactly the usual topology on
  $\rat$.  Let $C$ be a closed subset of $Z$, and let $cl (C)$ be its
  closure in $\real$.  It suffices to show that $cl (C) \cap Z$ is
  included in, hence equal to $C$: this will show that $C$ is closed
  in $\rat$ with its usual topology.  Take any point $x$ from
  $cl (C) \cap Z$.  Since $\real$ is first-countable, there is a
  sequence ${(x_n)}_{n \in \nat}$ of elements of $C$ that converges to
  $x$.  Let us consider $\nat_\infty$, the one-point compactification
  of $\nat$, where $\nat$ is given the discrete topology.  This is a
  compact Hausdorff space, hence it is trivially LCS-complete.  It is
  also countably-based, hence domain-complete (and quasi-Polish) by
  Theorem~\ref{thm:qPolish}.  The map $j \colon \nat_\infty \to X$
  defined by $j (n) \eqdef x_n$, $j (\infty) \eqdef x$ is continuous,
  and $f \circ j = g \circ j$ since the image of $j$ is included in
  $\rat$.  By the universal property of equalizers, $j = i \circ h$
  for some continuous map $h \colon \nat_\infty \to Z$.  We must have
  $h (n) = x_n$ and $h (\infty) = x$.  Since $\infty$ is a limit of
  the numbers $n \in \nat$ in $\nat_\infty$, $x$ must be a limit of
  ${(x_n)}_{n \in \nat}$ in $Z$.  The fact that $C$ is closed in $Z$
  implies that $x$ is in $C$, too, completing the argument.

  Hence $Z$ is $\rat$, and has the same topology.  But this is
  impossible, since $\rat$ is not LCS-complete.
\end{proof}

\begin{remark}
  \label{rem:qPolish:eq}
  In contrast, the category of quasi-Polish spaces has equalizers, and
  they are obtained as in $\Topcat$.  Indeed, for all continuous maps
  $f, g \colon X \to Y$ between two countably-based $T_0$ spaces $X$
  and $Y$, the coequalizer $[f=g] \eqdef \{x \in X \mid f (x)=g (x)\}$
  in $\Topcat$ is a $\bPi^0_2$ subspace of $X$
  \cite[Corollary~10]{deBrecht:qPolish}, and the $\bPi^0_2$ subspaces
  of a quasi-Polish space are exactly its quasi-Polish subspaces
  \cite[Corollary~23]{deBrecht:qPolish}.  We note that those
  properties fail in domain-complete and LCS-complete spaces: the
  singleton subspace $\{I\}$ of $\pow (I)$ (see Remark~\ref{rem:powI})
  is trivially quasi-Polish but not $\bPi^0_2$ in $\pow (I)$, because
  the $\bPi^0_2$ subsets of $\pow (I)$ that contain $I$, the top
  element, must be $G_\delta$ subsets, and we have seen that $\{I\}$
  is not $G_\delta$ in $\pow (I)$.  The reason of the failure is
  deeper: as the following proposition shows, the $\bPi^0_2$ subspaces
  of an LCS-complete space can fail to be LCS-complete.
\end{remark}

Using a named coined by Heckmann \cite{Heckmann:qPolish}, let us call
\emph{UCO subset} of a space $X$ any union of a closed subset with an
open subset.  All UCO subsets are trivially $\bPi^0_2$, and $\bPi^0_2$
subsets are countable intersections of UCO subsets.
\begin{proposition}
  \label{prop:ccmq:UCO}
  The UCO subsets of compact Hausdorff spaces are not in general
  compactly Choquet-complete.  In particular, the UCO subsets of
  LCS-complete spaces are not in general LCS-complete.
\end{proposition}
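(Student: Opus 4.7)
The plan is to prove the proposition by exhibiting a single counterexample. Take $K = \omega_1 + 1$, the set of ordinals $\leq \omega_1$ with the order topology; $K$ is compact Hausdorff, hence locally compact sober, hence LCS-complete. Let $U$ be the set of all successor ordinals in $\omega_1$, which is open in $K$ as a union of the open singletons $\{\xi+1\} = (\xi, \xi+2)$, and let $C = \{\omega_1\}$, which is closed in the Hausdorff space $K$. The UCO subset $X \eqdef U \cup C$ of $K$ will be shown not to be compactly Choquet-complete; by Proposition~\ref{prop:LCS:Choquet} this also shows that $X$ is not LCS-complete, yielding both sentences of the proposition.

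First I would pin down the subspace topology on $X$. Every $s \in U$ is isolated in $X$, while the open neighborhoods of $\omega_1$ in $X$ are exactly the sets containing $\{s \in U : s > \xi\} \cup \{\omega_1\}$ for some $\xi < \omega_1$. Since $U$ sits in $X$ as a discrete open subspace, a simple cover-by-singletons argument shows that every compact subset of $X$ meets $U$ in a finite set; the nonempty compact subsets of $X$ containing $\omega_1$ are therefore exactly the sets $\{\omega_1\} \cup F$ with $F \subseteq U$ finite.

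The core step is the strong Choquet game analysis. Given any strategy $\sigma$ for player $\alpha$, let $\beta$ play $x_n = \omega_1$ at every round, with shrinking $V_n = \{s \in U : s > \xi_n\} \cup \{\omega_1\}$ for a strictly increasing sequence $\xi_n < \omega_1$. Each response $U_n = C_n \cup \{\omega_1\}$ from $\alpha$ contains $\{s \in U : s > \xi'_n\}$ for some $\xi'_n \geq \xi_n$ and lies inside $V_n$. For $\alpha$ to win in the compactly Choquet sense, the sequence $\{U_n\}_{n \in \nat}$ must be a base of open neighborhoods of some compact $Q = \{\omega_1\} \cup F$ with $F \subseteq U$ finite; a direct check of the neighborhoods of $Q$ shows that this forces, for every $\xi < \omega_1$, some index $n$ with $C_n \setminus F \subseteq \{s \in U : s > \xi\}$, equivalently $\zeta_n \eqdef \inf(C_n \setminus F) > \xi$.

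The main obstacle is then the classical fact that $\omega_1$ has uncountable cofinality: every countable family of ordinals $\zeta_n < \omega_1$ satisfies $\sup_n \zeta_n < \omega_1$. Taking $\xi \eqdef \sup_n \zeta_n$, no $\zeta_n$ exceeds $\xi$, contradicting the base property. Hence player $\alpha$ has no winning strategy in the compactly Choquet game on $X$, and $X$ fails to be compactly Choquet-complete. Since $\omega_1 + 1$ is compact Hausdorff and thus LCS-complete, $X$ is a UCO subset of an LCS-complete space that is not LCS-complete, completing the proof.
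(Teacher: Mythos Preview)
Your proof is correct and uses a genuinely different counterexample from the paper's. The paper works in the uncountable product $[0,1]^I$ and takes $Y=\{\vec 0\}\cup([0,1]^I\setminus C^I)$ for a suitable closed $C\subseteq[0,1]$; it then argues that any countable family of basic neighborhoods of $\vec 0$ constrains only countably many coordinates, and uses a density/limit argument along an unconstrained coordinate to force the putative compact set $Q=\bigcap_n U_n$ to contain a point outside $Y$. Your example in $\omega_1+1$ is lower-tech: compactness in your $X$ reduces to finiteness, and the uncountable cofinality of $\omega_1$ replaces the product-space bookkeeping. Both arguments hinge on the same countable-versus-uncountable mismatch, but yours needs no product topology and no closure argument. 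Conversely, the paper's example has the mild advantage of living inside a metrizable-looking cube and avoiding ordinal machinery.

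Two small points worth tightening in your write-up. First, you can shortcut the endgame: since $X$ is $T_1$, any neighborhood base $(U_n)$ of a compact $Q$ satisfies $Q=\bigcap_n U_n$ (for $x\notin Q$, the open set $X\setminus\{x\}$ contains $Q$, so some $U_n$ misses $x$); hence $Q$ contains the tail $\{s\in U: s>\sup_n\xi'_n\}$, which is infinite because $\sup_n\xi'_n<\omega_1$, contradicting finiteness of compact sets directly. This subsumes your $\zeta_n$ argument. Second, note explicitly that $C_n\setminus F$ is nonempty (it contains a tail of successors minus a finite set), so that $\zeta_n$ is well-defined; and that $\beta$'s choice of $V_{n+1}$ can legally be made inside $U_n$ because every open neighborhood of $\omega_1$ contains some $V_\xi$.
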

\begin{proof}
  The second part follows from the first part by
  Fact~\ref{fact:top:complete} and Proposition~\ref{prop:LCS:Choquet}.
  
  Let $X \eqdef [0, 1]^I$, for some uncountable set $I$, and where
  $[0, 1]$ has the usual metric topology.  This is compact Hausdorff.
  Let us fix a closed subset $C$ of $[0, 1]$ with empty interior and
  containing $0$ and at least one other point $a$ (for example,
  $\{0, a\}$), and let $U$ be its complement.  Note that $U$ is dense
  in $[0, 1]$.  $C^I$ is closed in $X$, since its complement is the
  open subset $\bigcup_{i \in I} \pi_i^{-1} (U)$, where
  $\pi_i \colon X \to [0, 1]$ is projection onto coordinate $i$.  Let
  us define $Y$ as the UCO set $\{\vec 0\} \cup (X \diff C^I)$, where
  $\vec 0$ is the point whose coordinates are all $0$.  We claim that
  $Y$ is not compactly Choquet-complete.

  To this end, we assume it is, and we aim for a contradiction.  In
  the strong Choquet game, let $\beta$ play $x_n \eqdef \vec 0$ at
  each round of the game.  Let $U_n$, $n \in \nat$, be the open sets
  played by $\alpha$.  By assumption, $\bigcap_{n \in \nat} U_n$ is a
  compact subset $Q$ of $Y$, hence also of $X$ by
  Lemma~\ref{lemma:Gdelta:subspace}.  For each $n \in \nat$, $U_n$ is
  the intersection of $Y$ with an open neighborhood of $\vec 0$ in
  $X$, and that open neighborhood contains a basic open set
  $\bigcap_{i \in J_n} \pi_i^{-1} (V_{ni})$, where $J_n$ is finite and
  $V_{ni}$ is an open neighborhood of $0$ in $[0, 1]$.  In particular,
  $U_n$ contains $\bigcap_{i \in J_n} \pi_i^{-1} (\{0\}) \cap Y$.  It
  follows that $K = \bigcap_{n \in \nat} U_n$ contains
  $\bigcap_{i \in J} \pi_i^{-1} (\{0\}) \cap Y$, where
  $J \eqdef \bigcup_{n \in \nat} J_n$ is countable.

  Then $I \diff J$ is uncountable, hence non-empty.  Let $k$ be any
  element of $I \diff J$.  Since $\pi_k^{-1} (C)$ contains $C^I$, and
  $Y$ contains $X \diff C^I$, $Y$ contains $\pi_k^{-1} (U)$, so $K$
  contains $\bigcap_{i \in J} \pi_i^{-1} (\{0\}) \cap \pi_k^{-1} (U)$.
  We claim that $K$ must contain
  $\bigcap_{i \in J} \pi_i^{-1} (\{0\})$.  For every element $\vec x$
  in $\bigcap_{i \in J} \pi_i^{-1} (\{0\})$, let $x_k$ be its $k$th
  coordinate, and for every $b \in [0, 1]$, write $\vec x [k:=b]$ for
  the same element with coordinate $k$ changed to $b$.  Since $U$ is
  dense in $[0, 1]$, $x_k$ is the limit of a sequence
  ${(b_n)}_{n \in \nat}$ of elements of $U$.  Then $\vec x [k:=b_n]$,
  $n \in \nat$, form a sequence in $K$ that converges to $\vec x$.
  Since $K$ is compact in a Hausdorff space, hence closed, $\vec x$ is
  in $K$.

  Since $K$ is included in $Y$, $Y$ contains
  $\bigcap_{i \in J} \pi_i^{-1} (\{0\})$, too.  However
  $\vec 0 [k:=a]$ is in the latter, but not in the former since it is
  different from $\vec 0$ and in $C^I$.
\end{proof}

\section{Colimits}
\label{sec:colimits}

\begin{proposition}
  \label{prop:ccqm:coprod}
  The topological coproduct of an arbitrary family of domain-complete
  (resp., LCS-complete) spaces is domain-complete (resp.,
  LCS-complete).
\end{proposition}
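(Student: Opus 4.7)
The plan is to embed $\coprod_{i \in I} X_i$ as a $G_\delta$ subset of $Y \eqdef \coprod_{i \in I} Y_i$, where for each $i$ we write $X_i = \fcap_{n \in \nat} W_{in}$ as a descending intersection of open subsets of a continuous dcpo (resp., locally compact sober space) $Y_i$.

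First I would check that $Y$ itself belongs to the ambient class. For the domain-complete case, the disjoint union of posets, ordered with no relation between distinct components, is a dcpo, and because every directed subset is contained in a single summand, it is a continuous dcpo whose way-below relation is inherited componentwise. For the same reason, its Scott topology agrees with the topological coproduct topology (a subset is Scott-open iff each of its restrictions to $Y_i$ is), so $Y$ is a continuous dcpo whose Scott topology is the one underlying the topological coproduct. For the LCS-complete case, $Y$ is sober since topological coproducts of sober spaces are sober \cite[Lemma~8.4.2]{JGL-topology}, and locally compact because any point $x \in Y_i$ inherits from $Y_i$ a base of compact saturated neighborhoods, which remain compact saturated in $Y$.

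Next I would exhibit the $G_\delta$ presentation: set $W_n \eqdef \coprod_{i \in I} W_{in}$, which is open in $Y$ since $W_n \cap Y_i = W_{in}$ is open in $Y_i$ for each $i$. Because intersection commutes with disjoint union,
\[
\fcap_{n \in \nat} W_n \;=\; \coprod_{i \in I} \fcap_{n \in \nat} W_{in} \;=\; \coprod_{i \in I} X_i,
\]
so $\coprod_{i \in I} X_i$ is a $G_\delta$ subset of $Y$. There is no real obstacle in this argument; the only point worth highlighting is that the index $n$ ranging over $\nat$ remains countable even though $I$ need not be, which is exactly what makes the $G_\delta$ property survive passage to arbitrary coproducts (by contrast with arbitrary products, which is why Proposition~\ref{prop:ccqm:prod} was restricted to countable families).
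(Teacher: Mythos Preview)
Your proposal is correct and follows essentially the same route as the paper: form $Y=\coprod_{i\in I}Y_i$, verify it is a continuous dcpo with Scott topology equal to the coproduct topology (resp.\ locally compact sober, via the same observations about compact saturated neighborhoods and \cite[Lemma~8.4.2]{JGL-topology}), and exhibit $\coprod_{i\in I}X_i$ as $\bigcap_{n\in\nat}\coprod_{i\in I}W_{in}$. Your closing remark about why countability in $n$ survives arbitrary $I$ is a nice addition not made explicit in the paper.
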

\begin{proof}
  For each $i \in I$, let $X_i$ be the intersection
  $\bigcap_{n \in \nat} W_{in}$ of a descending family of open subsets
  of a continuous dcpo $Y_i$.  The coproduct of continuous dcpos is a
  continuous dcpo again, and the Scott topology is the coproduct
  topology \cite[Proposition~5.1.59]{JGL-topology}.  Then we can
  express the coproduct $\coprod_{i \in I} X_i$ as the $G_\delta$
  subset $\bigcap_{n \in \nat} \coprod_{i \in I} W_{in}$ of
  $\coprod_{i \in I} Y_i$.

  When each $Y_i$ is locally compact and sober, we use a similar
  argument, observing the following facts.  First, the compact
  saturated subsets of each $Y_i$ are compact saturated in
  $\coprod_{i \in I} Y_i$.  It follows easily that
  $\coprod_{i \in I} Y_i$ is locally compact.  The coproduct of
  arbitrarily many sober spaces is sober, too
  \cite[Lemma~8.4.2]{JGL-topology}.
\end{proof}

In order to show that coequalizers fail to exist, we make the
following observation.
\begin{lemma}
  \label{lemma:ccqm:countable}
  Every countable compactly Choquet-complete space is
  first-countable, hence countably-based.
\end{lemma}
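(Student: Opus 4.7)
My plan is to show that every point of a countable compactly Choquet-complete space $X$ has a countable base of open neighborhoods; first-countability of $X$ then yields a countable base for $X$ itself by taking the union of these bases over the countably many points.

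The key idea is to exploit $\beta$'s freedom in the strong Choquet game so that $\alpha$'s winning strategy $\sigma$ produces open sets $(U_n)_{n \in \nat}$ converging to a compact saturated set which is exactly $\upc x$ in the specialization preorder. Once we have $Q = \upc x$, a base of open neighborhoods of $Q$ is automatically a base of open neighborhoods of $x$, since open sets are upward-closed in specialization, so open neighborhoods of $x$ and of $\upc x$ coincide.

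Concretely, I would fix $x \in X$ and a winning strategy $\sigma$ provided by compactly Choquet-completeness. The countability of $X$ lets me enumerate $X \diff \upc x = \{y_0, y_1, \ldots\}$. For each $n$, since $x \not\leq y_n$, I would pick an open set $W_n$ with $x \in W_n$ and $y_n \notin W_n$. (The degenerate case $\upc x = X$ is handled separately: then every open neighborhood of $x$ equals $X$ and $\{X\}$ is already a countable base at $x$.) I would then have $\beta$ play $x_n \eqdef x$ at every round, $V_0 \eqdef W_0$, and $V_{n+1} \eqdef W_{n+1} \cap U_n$, with $\alpha$ responding via $\sigma$. Each $V_{n+1}$ is a valid move since it contains $x$ (because $W_{n+1} \ni x$ and $x \in U_n$ by the game rule $x_n \in U_n$) and is included in $U_n$.

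By compactly Choquet-completeness, $(U_n)_{n \in \nat}$ is a base of open neighborhoods of some non-empty compact saturated $Q$. I would verify $Q = \upc x$ in two steps: the inclusion $\upc x \subseteq Q$ is immediate, since $x \in U_n$ for every $n$ forces $x \in Q$ and $Q$ is saturated; conversely, $Q \subseteq U_n \subseteq V_n \subseteq W_n$ excludes $y_n$ for each $n$, so $Q \cap (X \diff \upc x) = \emptyset$. This gives $Q = \upc x$, and hence $(U_n)_{n \in \nat}$ is a countable neighborhood base at $x$. The main obstacle, as I see it, is conceptual: recognizing that the right way to consume the countability of $X$ is by a bookkeeping argument on $\beta$'s side, dedicating round $n$ to excluding $y_n$; once this is spotted, the game rules and the definition of compactly Choquet-complete do the rest routinely.
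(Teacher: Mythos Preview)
Your proposal is correct and follows essentially the same approach as the paper: have $\beta$ play the constant point $x$ while threading in open sets $W_n$ (the paper takes $W_n = X \setminus \dc y_n$) so that $\bigcap_n V_n = \upc x$, and then read off from compact Choquet-completeness that the $U_n$'s form a countable neighborhood base at $x$. The only cosmetic differences are that the paper frames the argument by contradiction and records $(V_n)_{n\in\nat}$ rather than $(U_n)_{n\in\nat}$ as the resulting base, which is immaterial since the two sequences are cofinal.
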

\begin{proof}
  Let $X$ be countable and compactly Choquet-complete.  Assume that
  $X$ is not first-countable.  There is a point $x$ that has no
  countable base of open neighborhoods.  For each
  $y \in X \diff \upc x$, $X \diff \dc y$ is an open neighborhood of
  $x$, and the intersection of those sets is $\upc x$.
  % their intersection is contained in X - (all y not in \upc x) = \upc
  % x
  % and each X - \dc y contains \upc x (otherwise x \leq y) hence their
  % intersection is \upc x
  Since $X$ is countable, we can therefore write $\upc x$ as the
  intersection of countably many open sets ${(W_n)}_{n \in \nat}$.
  Note that this does \emph{not} say that those open set form a base
  of open neighborhoods: we do not have a contradiction yet.

  In the strong Choquet game, we let $\beta$ play the same point
  $x_n \eqdef x$ at each step.  Initially, $V_0 \eqdef W_0$, and at
  step $n+1$, $\beta$ plays $V_{n+1} \eqdef U_n \cap W_{n+1}$, where
  $U_n$ was the last open set played by $\alpha$.  Note that
  $\fcap_{n \in \nat} V_n \subseteq \fcap_{n \in \nat} W_n = \upc x$,
  while the converse inclusion is obvious.  Since $X$ is compactly
  Choquet-complete, ${(V_n)}_{n \in \nat}$ is a base of open
  neighborhoods of some compact saturated set $Q$, and since
  $\fcap_{n \in \nat} V_n = \upc x$, $Q = \upc x$, and therefore
  ${(V_n)}_{n \in \nat}$ is a base of open neighborhoods of $x$:
  contradiction.

  Finally, every countable first-countable space is countably-based.
\end{proof}

\begin{proposition}
  \label{prop:ccqm:coeq:no}
  The categories of domain-complete, resp.\ LCS-complete spaces, do
  not have coequalizers.
\end{proposition}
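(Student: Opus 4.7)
The plan is to exhibit continuous maps between quasi-Polish (hence both domain-complete and LCS-complete) spaces whose topological coequalizer is a countable, non-first-countable Hausdorff space, and then to show that no LCS-complete or domain-complete coequalizer can exist by reducing to the fact that countable LCS-complete spaces are quasi-Polish and thus countably-based.

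I would take $A = \nat$ with the discrete topology and $X = \coprod_{n \in \nat}\nat_\infty$, the coproduct of countably many copies of the one-point compactification of~$\nat$; by Proposition~\ref{prop:qPolish}, both are quasi-Polish. Define $f(n) = \infty_n$ (the point at infinity of the $n$th copy) and $g(n) = \infty_0$. The topological coequalizer $p \colon X \to Y$ identifies all the $\infty_n$'s into a single point~$*$: as a set, $Y = \{*\} \cup (\nat \times \nat)$, every $(n,k)$ is isolated, and a basic open neighbourhood of~$*$ has the form $\{*\} \cup \bigcup_n \{(n,k) : k \notin F_n\}$ for an arbitrary sequence of finite sets $F_n \subseteq \nat$. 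A diagonalisation in the spirit of Remark~\ref{rem:powI}---strictly enlarging $F_{n,n}$ for each~$n$---shows that $*$ has no countable base of open neighbourhoods, so $Y$ is not first-countable and, by Lemma~\ref{lemma:ccqm:countable} together with Theorem~\ref{thm:qPolish}, not LCS-complete.

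Now suppose, for contradiction, that an LCS-complete (resp., domain-complete) coequalizer $q \colon X \to Q$ exists, and factor $q = h \circ p$ for the unique continuous $h \colon Y \to Q$. First I would show that $h$ is a topological embedding by an argument analogous to that of Proposition~\ref{prop:ccqm:eq:no}, using the (trivially domain-complete) space $\Sierp$: for every open $U \subseteq Y$ the characteristic function $1_U \colon Y \to \Sierp$ is continuous and $1_U \circ p$ coequalizes $f$ and $g$, hence factors uniquely as $\phi_U \circ h$ for some continuous $\phi_U \colon Q \to \Sierp$. Since $Y$ is Hausdorff, the family $\{1_U\}_U$ separates points, so $h$ is injective; and the identity $U = h^{-1}(\phi_U^{-1}(\{1\}))$ shows that every open of $Y$ is the $h$-preimage of an open of~$Q$, so the topology of $Y$ coincides with the subspace topology inherited from~$Q$ through~$h$.

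The hard part will be to turn this embedding into a contradiction, and my plan is to argue that $Q$ must be countable. First, I would show that the image $h(Y) = q(X)$ is dense in~$Q$, by a further $\Sierp$-valued argument exploiting that $q$ is an epi in the subcategory: any point of~$Q$ outside $\overline{q(X)}$ could be separated from $q(X)$ by a characteristic function of an open set, giving two continuous maps $Q \to \Sierp$ with equal precomposition with~$q$ but different values, contradicting the uniqueness clause of the universal property. Second, I would rule out genuine extra points of~$Q$ by combining sobriety and well-filteredness of~$Q$ with the fact that each of the countably many inclusions $\nat_\infty \hookrightarrow X$ composes with~$q$ to give a continuous map $\nat_\infty \to Q$ whose compact image contains $h(*)$. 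Once $Q$ is shown countable, Lemma~\ref{lemma:ccqm:countable} and Theorem~\ref{thm:qPolish} make it countably based, the subspace $h(Y) \cong Y$ is then countably based as well and in particular first-countable, contradicting the diagonalisation above. Should the sobriety-based step resist a clean proof, a fallback is to use the compactly Choquet-complete structure of~$Q$ (Proposition~\ref{prop:LCS:Choquet}) to exhibit $h(Y)$ as a $G_\delta$ subset of~$Q$, whereupon Proposition~\ref{prop:ccqm:closed} makes $Y$ itself LCS-complete, another direct contradiction.
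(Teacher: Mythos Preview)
Your setup matches the paper's exactly (the sequential fan as topological coequalizer of two maps $\nat \to \coprod_n \nat_\infty$), and your argument that $h \colon Y \to Q$ is a topological embedding via $\Sierp$-valued test maps is correct. The density of $q(X)$ in $Q$ also follows as you describe.

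The gap is precisely the step you yourself flag as ``the hard part'': getting $Q$ countable. Density alone cannot do this ($\rat$ is dense in the LCS-complete space $\real$), and neither of your proposed routes is convincing. Sobriety and well-filteredness say nothing about a dense countable subset being the whole space; the compact images of the $\nat_\infty$-copies are just countably many compact sets containing $h(*)$, with no filtered structure to exploit. The Choquet-completeness fallback is equally vague: that property produces compact sets as intersections of the opens played by $\alpha$ in the strong Choquet game, not a certificate that a prescribed dense subset such as $h(Y)$ is $G_\delta$. The paper closes this gap with a direct \emph{surjectivity} argument for $q$, using test maps into both $\Sierp$ and the \emph{discrete} two-point space $\{0,1\}$: for $z \in Q$, the epi property applied to $\chi_{Q \setminus \dc z}$ shows $\dc z$ meets $\img q$; then one checks that two distinct points of $\img q$ cannot both lie in $\dc z$ (at least one must have finite second coordinate, and the characteristic function of that single isolated point of $X$ factors through $Q$ into discrete $\{0,1\}$, yielding a monotonicity contradiction), so there is a unique $z' \in \dc z \cap \img q$; finally comparing $\chi_{Q \setminus \dc z}$ with $\chi_{Q \setminus \dc z'}$ via the epi property forces $\dc z = \dc z'$, hence $z=z'$ by $T_0$. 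Once $Q$ is countable, Lemma~\ref{lemma:ccqm:countable} makes it first-countable, and the paper finishes with a diagonalisation on a countable base at $q(0,\infty)$; your embedding could substitute for that last step, but not for the surjectivity argument.
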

\begin{proof}
  Let $\nat_\infty$ be the one-point compactification of $\nat$, the
  latter with its discrete topology.  Let us form the coproduct $Y$ of
  countably many copies of $\nat_\infty$.  Its elements are $(k, n)$
  where $k \in \nat$, $n \in \nat_\infty$.  The \emph{sequential fan}
  is the quotient of $Y$ by the equivalence relation that equates
  every $(k, \infty)$, $k \in \nat$.  This is a known example of a
  countable space that is not countably-based.  That can be realized
  as the coequalizer of $f, g \colon X \to Y$ in $\Topcat$, where $X$
  is $\nat$ with the discrete topology, $f (k) \eqdef (k, \infty)$,
  $g (k) \eqdef (0, \infty)$.  Note that $X$ and $Y$ are
  domain-complete: $X$ is trivially locally compact and sober (since
  Hausdorff), and countably-based, then use Theorem~\ref{thm:qPolish};
  for similar reasons, $\nat_\infty$ is domain-complete, then use
  Proposition~\ref{prop:ccqm:coprod} to conclude that $Y$ is, too.

  Let us assume that $f$ and $g$ have a coequalizer $q \colon Y \to Z$
  in the category of LCS-complete spaces, resp.\ of domain-complete
  spaces.  There is no reason to believe that $Z$ is the sequential
  fan, hence we have to work harder.  There is no reason to believe
  that $q$ is surjective either, since epis in concrete categories may
  fail to be surjective.  However, $q$ is indeed surjective, as we now
  show.  This is done in several steps.  Let $z \in Z$.

  The closure of $z$ is $\dc z$, so
  $\chi_{Z \diff \dc z} \colon Z \to \Sierp$ is continuous, where
  $\Sierp \eqdef \{0 < 1\}$ is Sierpi\'nski space---trivially a
  continuous dcpo, hence a domain-complete space.  Let $\mathbf 1$ be
  the constant map equal to $1 \in \Sierp$.  If $\dc z$ did not
  intersect the image of $q$, then $\chi_{Z \diff \dc z} \circ q$
  would be equal to $\mathbf 1 \circ q$, although
  $\chi_{Z \diff \dc z} \neq \mathbf 1$, and that is impossible since
  $q$ is epi.  Therefore $\dc z$ intersects the image of $q$.

  Imagine that there were two distinct points $q (k_1, n_1)$,
  $q (k_2, n_2)$ in $\dc z$.  In particular, $(k_1, n_1)$ and
  $(k_2, n_2)$ are distinct.  Also, not both $n_1$ and $n_2$ are equal
  to $\infty$, since otherwise
  $q (k_1, n_1) = q (k_1, \infty) = q (f (k_1)) = q (g (k_1)) = q (g
  (k_2))$ (since $g$ is a constant map)
  $= q (f (k_2)) = q (k_2, \infty) = q (k_2, n_2)$.  Without loss of
  generality, let us say that $n_1 \neq \infty$.  We consider the map
  $\chi_{\{(k_1, n_1)\}} \colon Y \to \{0, 1\}$, where $\{0, 1\}$ has
  the discrete topology (and is a continuous dcpo with the equality
  ordering, hence domain-complete).  Observe that this is a continuous
  map, owing to the fact that $n_1 \neq \infty$.  Since
  $\chi_{\{(k_1, n_1)\}} \circ f = \chi_{\{(k_1, n_1)\}} \circ g$
  ($=0$), $\chi_{\{(k_1, n_1)\}} = h \circ q$ for some unique
  continuous map $h \colon Z \to \{0, 1\}$, by the definition of a
  coequalizer.  Then $h (q (k_1, n_1)) = 1$, while
  $h (q (k_2, n_2)) = 0$, but since $h$ is continuous it must be
  monotonic with respect to the underlying specialization orderings,
  so $q (k_1, n_1) \leq z$ implies $h (q (k_1, n_1)) = h (z)$, and
  similarly $h (q (k_2, n_2)) = h (z)$.  This would imply
  $1 = h (z) = 0$, a contradiction.  Hence there is exactly one point
  $z' \leq z$ in the image of $q$.

  Consider the two maps
  $\chi_{Z \diff \dc z'}, \chi_{Z \diff \dc z} \colon Z \to \Sierp$.
  For every $(k, n) \in Y$, if $\chi_{Z \diff \dc z'} (q (k, n))=0$,
  then $q (k, n) \leq z' \leq z$, so
  $\chi_{Z \diff \dc z} (q (k, n))=0$; conversely, if
  $\chi_{Z \diff \dc z} (q (k, n))=0$, then $q (k, n)$ is below $z$
  and is therefore the unique point $z' \leq z$ in the image of $q$,
  so
  $\chi_{Z \diff \dc z'} (q (k, n)) = \chi_{Z \diff \dc z'} (z') = 0$.
  Hence we have two morphisms which yield the same map when composed
  with $q$.  Since $q$ is epi, they must be equal.  It follows that
  $\dc z = \dc z'$, and since $Z$ is $T_0$ (since sober, see
  Proposition~\ref{prop:ccqm:sober}), $z=z'$.  Therefore $z$ is in the
  image of $q$.  This completes the proof that $q$ is surjective.

  Since $q$ is surjective, and $Y$ is countable, so is $Z$.  By
  Lemma~\ref{lemma:ccqm:countable}, $Z$ is first-countable.  Let
  $\omega \eqdef q (0, \infty)$.  For every $k \in \nat$,
  $q (k, \infty) = q (f (k)) = q (g (k)) = \omega$.  Let
  ${(B_k)}_{k \in \nat}$ be a countable base of open neighborhoods of
  $\omega$ in $Z$.  For each $k \in \nat$, since
  ${(k, n)}_{n \in \nat}$ converges to $(k, \infty)$ in $Y$,
  ${(q (k, n))}_{n \in \nat}$ converges to $\omega$, so $q (k, n)$ is
  in $B_k$ for $n$ large enough.  Let us fix some $n_k \in \nat$ such
  that $(k, n) \in q^{-1} (B_k)$ for every $n \geq n_k$.  Let
  $h \colon Y \to \{0, 1\}$ map every point $(k, n)$ to $0$ if
  $n \leq n_k$, to $1$ if $n > n_k$.  This is continuous,
  $h \circ f = \mathbf 1 = h \circ g$, so $h = h' \circ q$ for some
  unique continuous map $h' \colon Z \to \{0, 1\}$.  Since
  $h (0, \infty)=1$, $h' (\omega)=1$.  By definition of a base, the
  open neighborhood ${h'}^{-1} (\{1\})$ of $\omega$ contains some
  $B_k$.  Recall that $(k, n_k)$ is in $q^{-1} (B_k)$, hence also in
  $q^{-1} ({h'}^{-1} (\{1\})) = h^{-1} (\{1\})$, so $h (k, n_k)=1$.
  However, by definition of $h$, $h (k, n_k)=0$.  We reach a
  contradiction, so the coequalizer of $f$ and $g$ does not exist.
\end{proof}

\begin{remark}
  \label{rem:qpolish:coeq:no}
  The same proof shows that the category of quasi-Polish spaces does
  not have coequalizers.
\end{remark}

\section{The failure of Cartesian closure}
\label{sec:fail-cart-clos}

\begin{proposition}
  \label{prop:ccqm:ccc:no}
  In the category of domain-complete, resp.\ LCS-complete spaces,
  every exponentiable object is locally compact sober.  The categories
  of domain-complete, resp.\ LCS-complete spaces, are not
  Cartesian-closed.
\end{proposition}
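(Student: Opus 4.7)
The plan is to prove both parts together. For the first claim, Proposition~\ref{prop:ccqm:sober} already tells us that every domain-complete (hence LCS-complete) space is sober, and a sober space is locally compact if and only if it is core-compact. So it suffices to show that every $X$ that is exponentiable in the category is core-compact, i.e., that $\Open X$ is a continuous lattice.

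To that end, I would exploit Sierpi\'nski space $\Sierp$, which is a pointed continuous dcpo and therefore lies in both categories under consideration. Since $X$ is exponentiable, the exponential $\Sierp^X$ exists in the category, and its underlying set is in bijection with $\Open X$. By Proposition~\ref{prop:ccqm:prod}, binary products in the category coincide with the topological product, so the counit $\mathrm{ev}\colon X \times \Sierp^X \to \Sierp$ is continuous between topological products and $\mathrm{ev}^{-1}(\{1\}) = \{(x, U) \mid x \in U\}$ is open. Given $U_0 \in \Open X$ and $x_0 \in U_0$, this yields a basic open rectangle $V \times \mathcal{W} \subseteq \mathrm{ev}^{-1}(\{1\})$ with $x_0 \in V$, $U_0 \in \mathcal{W}$, and $V \subseteq \bigcap_{U \in \mathcal{W}} U$.

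The hard part will be to show that $\mathcal{W}$ is Scott-open in $\Open X$, because together with upward closure this gives $V \Subset U_0$ and hence core-compactness. Upward closure for inclusion is automatic since the specialization preorder on $\Sierp^X$ is pointwise on opens (test with the universal property against the one-point space and $\Sierp$). Closure under directed suprema is the real obstacle: given a directed family ${(U_i)}_{i \in I}$ with union $U_\infty \in \mathcal{W}$, I would encode it as a continuous map $\varphi \colon Z \to \Sierp^X$ from a suitable domain-complete test space $Z$ carrying a distinguished ``limit'' point mapped to $U_\infty$ and points $i$ mapped to $U_i$. Such a $\varphi$ can be exhibited via the exponential adjunction from an explicit open subset of $X \times Z$; the continuity of $\varphi$ then forces $\varphi^{-1}(\mathcal{W})$ to contain some $i$, giving $U_i \in \mathcal{W}$. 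When $I$ is countable this can be carried out with $Z \eqdef \nat_\infty$, which is compact Hausdorff, countably-based, and thus domain-complete by Theorem~\ref{thm:qPolish}; the general case requires tailoring $Z$ to $I$, and verifying that the resulting test object lies in the category is the core technical point.

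For the second claim, I only need to exhibit a domain-complete space that is not locally compact sober. Baire space $\nat^\nat$ is Polish, hence quasi-Polish, hence domain-complete by Proposition~\ref{prop:qPolish}, and a fortiori LCS-complete; it is Hausdorff, hence sober, but it is not locally compact, since every basic clopen cylinder is homeomorphic to $\nat^\nat$ itself, which is not compact. By the first part, $\nat^\nat$ is not exponentiable in the category of domain-complete (resp., LCS-complete) spaces, and therefore neither category is Cartesian-closed.
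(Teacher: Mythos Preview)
Your overall strategy coincides with the paper's: use that finite products in the subcategory are topological (Proposition~\ref{prop:ccqm:prod}), form $\Sierp^X$, identify its underlying set with $\Open X$, observe that continuity of evaluation makes the membership graph open, conclude core-compactness, combine with sobriety (Proposition~\ref{prop:ccqm:sober}) to get local compactness, and then exhibit $\nat^\nat$ for the failure of Cartesian closure. The paper dispatches the step from ``$(\in)$ is open in $X\times\Open X$'' to ``$X$ is core-compact'' in one line by citing \cite[Exercise~5.2.7]{JGL-topology}.

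The point you flag as ``the hard part'' --- showing that $\mathcal W$ is closed under directed unions --- is a genuine issue (mere openness of $(\in)$ for \emph{some} topology on $\Open X$ does not suffice: the Alexandroff or discrete topology would make $(\in)$ open for arbitrary $X$), but your proposed resolution via test objects $Z$ tailored to the directed family is both harder than necessary and, as you yourself note, left incomplete for uncountable index sets. There is a one-line fix using what the paper has already established. The exponential $\Sierp^X$ is by hypothesis an object of the category, hence sober by Proposition~\ref{prop:ccqm:sober}. Every sober space is a monotone convergence space: directed suprema in the specialization preorder exist (as the generic point of the closure of the directed set), and every open subset is inaccessible by such suprema. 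Since you have already checked that the specialization preorder on $\Sierp^X$ is inclusion, the supremum of a directed family ${(U_i)}_{i\in I}$ in $\Sierp^X$ is simply $\bigcup_{i\in I} U_i$, and every open $\mathcal W\subseteq\Sierp^X$ is automatically Scott-open in $(\Open X,\subseteq)$. That closes the gap without any bespoke test objects and makes the citation of \cite[Exercise~5.2.7]{JGL-topology} go through in the subcategory.
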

\begin{proof}
  Let $X$ be an exponentiable object in any of those categories.  By
  \cite[Theorem~5.5.1]{JGL-topology}, in any full subcategory of
  $\Topcat$ with finite products and containing $1 \eqdef \{*\}$ as an
  object, and up to a unique isomorphism, the exponential $Y^X$ of two
  objects $X$, $Y$ is the space $[X \to Y]$ of all continuous maps
  from $X$ to $Y$, with some uniquely determined topology.  We take
  $Y \eqdef \Sierp$.  Then $[X \to Y]$ can be equated with the lattice
  $\Open X$ of open subsets of $X$.  The application map from
  $[X \to Y] \times X$ to $Y$ is continuous, and notice that product
  $\times$ here is just topological product
  (Proposition~\ref{prop:ccqm:coprod}).  It follows that the graph
  $(\in)$ of the membership relation on the topological product
  $X \times \Open X$ is open.  By \cite[Exercise~5.2.7]{JGL-topology},
  this happens if and only if $X$ is core-compact.  Since $X$ is also
  sober (Proposition~\ref{prop:ccqm:sober}), and sober core-compact
  spaces are locally compact \cite[Theorem~8.3.10]{JGL-topology}, $X$
  must be locally compact.  Now take any non-locally compact
  LCS-complete space, for example Baire space $\nat^\nat$, which is
  Polish but not locally compact.
\end{proof}

\begin{remark}
  \label{rem:polish:ccc:no}
  The same proof shows that the category of quasi-Polish spaces is not
  Cartesian-closed.  A similar proof, with $[0, 1]$ replacing
  $\Sierp$, would show that the category of Polish spaces is not
  Cartesian-closed, using Arens' Theorem \cite{Arens:transform} (see
  also \cite[Exercise~6.7.25]{JGL-topology}): the completely regular
  Hausdorff spaces that are exponentiable in the category of Hausdorff
  spaces are exactly the locally compact Hausdorff spaces.
\end{remark}

We can be more precise on the subject of quasi-Polish spaces.
\begin{theorem}
  \label{thm:qpolish:exp}
  The exponentiable objects $X$ in the category of quasi-Polish spaces
  are the locally compact quasi-Polish spaces, i.e., the
  countably-based locally compact sober spaces.  For every
  quasi-Polish space $Y$, the exponential object is $[X \to Y]$ with the
  compact-open topology.
\end{theorem}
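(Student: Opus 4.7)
The plan is to handle the two directions separately, closely along the lines of Proposition~\ref{prop:ccqm:ccc:no}.

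For the necessity direction, I would mimic the proof of Proposition~\ref{prop:ccqm:ccc:no}. If $X$ is exponentiable in the category of quasi-Polish spaces, take $Y \eqdef \Sierp$, which is trivially quasi-Polish; the exponential $\Sierp^X$ exists and has underlying set $[X \to \Sierp] \cong \Open X$ with some topology. Since countable products in quasi-Polish spaces coincide with topological products (by the countably-based specialisation of Proposition~\ref{prop:ccqm:prod}), continuity of evaluation forces the membership graph $(\in)$ to be open in the topological product $X \times \Sierp^X$, which by \cite[Exercise~5.2.7]{JGL-topology} makes $X$ core-compact. Since quasi-Polish spaces are sober (Proposition~\ref{prop:ccqm:sober} via Theorem~\ref{thm:qPolish}), $X$ is therefore locally compact; countable-basedness is built into quasi-Polishness.

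For the sufficiency direction, assume $X$ is countably-based, locally compact, and sober, so quasi-Polish. The classical currying bijection for locally compact sober $X$ in $\Topcat$ gives that $[X \to Y]_{co}$ satisfies the universal property of the exponential in $\Topcat$, and since countable products in quasi-Polish spaces are topological, this adjunction restricts to quasi-Polish spaces provided $[X \to Y]_{co}$ is itself quasi-Polish whenever $Y$ is. I would verify this via Theorem~\ref{thm:qPolish}. First, $[X \to Y]_{co}$ is countably-based: pick a countable family $(Q_n)_{n \in \nat}$ of compact saturated subsets of $X$ whose interiors form a base of $\Open X$ (available since $X$ is countably-based and locally compact) and a countable base $(V_m)_{m \in \nat}$ of $Y$; then $\{[Q_n \subseteq V_m] \mid n, m \in \nat\}$ is a countable subbase of the compact-open topology.

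For LCS-completeness of $[X \to Y]_{co}$, I would embed $Y = \bigcap_k W_k$ as a $G_\delta$ of a continuous dcpo $Y'$ via domain-completeness of $Y$ (Theorem~\ref{thm:qPolish}), arranging $Y'$ to be a continuous lattice. Since the interiors of the $Q_n$'s cover $X$, one has $X = \bigcup_n Q_n$, so
\[
  [X \to Y] = \bigcap_{n, k \in \nat} [Q_n \subseteq W_k]
\]
exhibits $[X \to Y]$ as a $G_\delta$ subset of $[X \to Y']_{co}$. For $X$ core-compact and $Y'$ a continuous lattice, the pointwise-ordered function space $[X \to Y']$ is a continuous dcpo by a classical domain-theoretic result, and Proposition~\ref{prop:co=Scott} applied to the $\odot$-consonant space $X$ (Lemma~\ref{lemma:ccqm:dotconsonant}) and the continuous lattice $Y'$ identifies its Scott topology with the compact-open topology. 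Hence $[X \to Y']_{co}$ is a continuous dcpo under its Scott topology, so domain-complete; $[X \to Y]_{co}$ inherits LCS-completeness as a $G_\delta$ subspace by Proposition~\ref{prop:ccqm:closed}. Combined with countable-basedness, Theorem~\ref{thm:qPolish} then gives that $[X \to Y]_{co}$ is quasi-Polish.

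The main obstacle is arranging the enclosing dcpo $Y'$ to be a continuous lattice (or at least a continuous L-domain) so that $[X \to Y']$ itself is a continuous dcpo; mere domain-completeness of $Y$ only places $Y$ inside an unstructured continuous dcpo. I would address this either by composing de Brecht's $\bPi^0_2$-embedding of $Y$ into the $\omega$-algebraic complete lattice $\pow (\omega)$ with auxiliary Sierpi\'nski coordinates encoding the implicational constraints as positive ones, converting the $\bPi^0_2$-definition into a genuine $G_\delta$-definition inside an enlarged $\omega$-algebraic complete lattice, or, alternatively and perhaps more cleanly, by sidestepping the function-space continuous-dcpo argument altogether---showing directly that $[X \to Y]_{co}$ is sober (standard for $X$ locally compact sober and $Y$ sober) and compactly Choquet-complete (by combining a winning strategy for $\alpha$ on $Y$, available from quasi-Polishness via Proposition~\ref{prop:LCS:Choquet}, with a countable base of compact saturated subsets of $X$), from which quasi-Polishness follows by Theorem~\ref{thm:qPolish} given countable-basedness.
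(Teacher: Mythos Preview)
Your necessity argument matches the paper's. For sufficiency, your overall architecture---embed $Y$ into a nice ambient space, show the function space into the ambient is a continuous dcpo under the Scott${}={}$compact-open topology via Proposition~\ref{prop:co=Scott}, then descend---is also the paper's. The divergence is in how the descent is organized, and the obstacle you flag is real.

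You try to realize $Y$ as a $G_\delta$ subset of a continuous \emph{lattice} $Y'$, so that $[X\to Y]$ becomes a $G_\delta$ subset of the continuous dcpo $[X\to Y']$. But quasi-Polish spaces are only guaranteed to be $\bPi^0_2$ in $\pow(\nat)$, and $\bPi^0_2\neq G_\delta$ there: for any $A\subsetneq\nat$, the singleton $\{A\}=\upc A\cap\dc A$ is $\bPi^0_2$ but not $G_\delta$, since every $G_\delta$ set in $\pow(\nat)$ is upward-closed. Your ``auxiliary Sierpi\'nski coordinates'' idea for converting $\bPi^0_2$ into $G_\delta$ is not obviously workable, because the negative information $y\notin U_n$ cannot be recorded by a Scott-continuous coordinate. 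Domain-completeness does give $Y$ as a $G_\delta$ in \emph{some} continuous dcpo, but with no lattice structure, so the classical result that $[X\to Y']$ is continuous does not apply.

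The paper sidesteps this entirely by staying with $\bPi^0_2$ rather than $G_\delta$. It writes $Y$ as the equalizer of two continuous maps $f,g\colon\pow(\nat)\to\pow(\nat)$ (exactly the construction from Proposition~\ref{prop:ccqm:sober}). Since $X$ is exponentiable in $\Topcat$, the functor $(\_)^X$ is a right adjoint and preserves this equalizer, so $[X\to Y]_{co}$ is the equalizer of $f^X,g^X\colon[X\to\pow(\nat)]_{co}\to[X\to\pow(\nat)]_{co}$. Now $[X\to\pow(\nat)]_{co}$ is shown to be a countably-based continuous lattice (via Proposition~\ref{prop:co=Scott} and weight estimates from \cite{GHKLMS:contlatt}), hence quasi-Polish; and an equalizer of continuous maps between countably-based $T_0$ spaces is $\bPi^0_2$ in the source \cite[Corollary~10]{deBrecht:qPolish}, hence quasi-Polish by \cite[Corollary~23]{deBrecht:qPolish}. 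The trade is: you wanted $G_\delta$-in-continuous-dcpo plus Theorem~\ref{thm:qPolish}; the paper uses $\bPi^0_2$-in-quasi-Polish plus de~Brecht's characterization, which is exactly the right closure property here.

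Your second alternative---proving compact Choquet-completeness of $[X\to Y]_{co}$ directly from a strategy on $Y$---is plausible and would be a genuinely different proof, but you have not carried it out, and synchronizing $\alpha$'s play across all the compact ``probes'' $Q_n$ into $Y$ is where the work lies.
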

\begin{proof}
  We first note that every quasi-Polish space is sober and
  countably-based, and that conversely every countably-based locally
  compact sober is quasi-Polish \cite[Theorem~44]{deBrecht:qPolish}.
  
  Assume $X$ is locally compact quasi-Polish, and $Y$ is quasi-Polish.
  The only thing we must show is that $[X \to Y]$, with the
  compact-open topology, is quasi-Polish.  Indeed, the application map
  from $[X \to Y] \times X$ to $Y$ will automatically be continuous,
  and the currification $z \mapsto (x \mapsto f (z, x))$ of every
  continuous map $f \colon Z \times X \to Y$ will be continuous from
  $Z$ to $[X \to Y]$, because $X$ is exponentiable in $\Topcat$
  \cite[Theorem~5.4.4]{JGL-topology} and the exponential object is
  $[X \to Y]$, with the compact-open topology, owing to the fact that
  $X$ is locally compact \cite[Exercise~5.4.8]{JGL-topology}.

  Up to homeomorphism $Y$ is a $\bPi^0_2$ subspace of $\pow (\nat)$
  \cite[Corollary~24]{deBrecht:qPolish}.  Hence write $Y$ as
  $\{z \in \pow (\nat) \mid \forall n \in \nat, z \in U_n \limp z \in
  V_n\}$, where $U_n$ and $V_n$ are open.  As in the proof of
  Proposition~\ref{prop:ccqm:sober}, we define
  $f,g \colon \pow (\nat) \to \pow (\nat)$ by
  $f (z) \eqdef \{n \in \nat \mid z \in U_n\}$,
  $g (z) \eqdef \{n \in \nat \mid z \in U_n \cap V_n\}$.
  % $f (z) \eqdef {(\chi_{U_n} (z))}_{n \in \nat}$,
  % $g (z) \eqdef {(\chi_{U_n \cap V_n} (z))}_{n \in \nat}$.
  The equalizer of $f$ and $g$ in $\Topcat$ is $Y$.

  Since $X$ is exponentiable, the exponentiation functor $\_^X$ on
  $\Topcat$ is well-defined and is right adjoint to the product
  functor $\_ \times X$ on $\Topcat$.  Since right adjoints preserve
  limits, in particular equalizers, $Y^X$ is the equalizer of the maps
  $f^X, g^X \colon {(\pow (\nat))}^X \to {(\pow (\nat))}^X$.  Since
  $X$ is locally compact, we know that $Y^X$ is $[X \to Y]$ with the
  compact-open topology (see \cite[Exercise~5.4.11]{JGL-topology} for
  example).  % It remains to show that $Y^X$ is quasi-Polish.

  Similarly, ${(\pow (\nat))}^X = [X \to \pow (\nat)]$ with the
  compact-open topology.  Recall that every quasi-Polish space is
  LCS-complete hence $\odot$-consonant
  (Lemma~\ref{lemma:ccqm:dotconsonant}), and $\pow (\nat)$ is an
  algebraic complete lattice.  By Proposition~\ref{prop:co=Scott}, the
  compact-open topology on $[X \to \pow (\nat)]$ is the Scott
  topology.

  We now use \cite[Proposition~II-4.6]{GHKLMS:contlatt}, which says
  that if $X$ is core-compact and $L$ (here $\pow (\nat)$) is an
  injective $T_0$ space (i.e., a continuous complete lattice by
  \cite[Theorem~II-3.8]{GHKLMS:contlatt}), then $[X \to L]$ is a
  continuous complete lattice.  We claim that $[X \to \pow (\nat)]$ is
  countably-based.  This follows froms
  \cite[Corollary~III-4.10]{GHKLMS:contlatt}, which says that when $X$
  is a $T_0$ core-compact space and $L$ is a continuous lattice such
  that $w \eqdef \max (w (X), w (L))$ is infinite ($w (L)$ is the
  minimal cardinality of a basis of $L$, and is $\omega$ in our case;
  $w (X)$ is the \emph{weight} of $X$, namely the minimal cardinality
  of a base of $X$, and is less than or equal to $\omega$, by
  assumption), then $w ([X \to L]) \leq w (\Open [X \to L]) \leq w$.
  We have shown that ${(\pow (\nat))}^X = [X \to \pow (\nat)]$ is a
  countably-based continuous dcpo, hence an $\omega$-continuous dcpo
  by a result of Norberg \cite[Proposition~3.1]{Norberg:randomsets}
  (see also \cite[Lemma~7.7.13]{JGL-topology}), hence a quasi-Polish
  space.

  The equalizer (in $\Topcat$) of two continuous maps between
  countably-based $T_0$ spaces is a $\bPi^0_2$ subspace of the source
  space \cite[Corollary~10]{deBrecht:qPolish}.  Hence $Y^X$ is
  $\bPi^0_2$ in ${(\pow (\nat))}^X$.  Since the $\bPi^0_2$ subspaces
  of a quasi-Polish space are exactly its quasi-Polish subspaces
  \cite[Corollary~23]{deBrecht:qPolish}, $Y^X=[X \to Y]$ is quasi-Polish.
\end{proof}

% \ForAuthors{JGL: are those categories closed under projective limits
%   indexed by $\nat$?}

\section{Compact subsets of LCS-complete spaces}
\label{sec:compact-subsets-lcs}

% \ForAuthors{JGL: should we keep this section?}

A well-known theorem due to Hausdorff states that, in a complete
metric space, a subset is compact if and only if it is closed and
precompact, where precompact means that for every $\epsilon > 0$, the
subset can be covered by finitely many open balls of radius
$\epsilon$.  An immediate consequence is as follows.  Build a finite
union $A_0$ of closed balls of radii at most $1$.  Then build a finite
union $A_1$ of closed balls of radii at most $1/2$ included in $A_0$,
then a finite union $A_2$ of closed balls of radii at most $1/4$
included in $A_1$, and so on.  Then $\fcap_{n \in \nat} A_n$ is
compact.  (That argument is the key to showing that every bounded
measure on a Polish space is tight, for example.)  We show that a
similar construction works in LCS-complete spaces.

In this section, we fix a presentation of an LCS-complete space $X$ as
$\ker \mu$ for some continuous map $\mu \colon Y \to \creal^{op}$, $Y$
locally compact sober (see Remark~\ref{rem:measurement}).  Replacing
$\mu$ by $\frac 2 \pi \arctan \circ \mu$, we may assume that $\mu$
takes its values in $[0, 1]$.

For every non-empty compact saturated subset $Q$ of $Y$, the image
$\mu [Q]$ of $Q$ by $\mu$ is compact in $[0, 1]^{op}$, hence has a
largest element.  Let us call that largest value the \emph{radius}
$r (Q)$ of $Q$.  Note that this depends not just on $Y$, but also on
$\mu$.  Note also that $r (\upc y) = \mu (y)$ for every $y \in Y$, and
that $r (\bigcup_{i=1}^n Q_i) = \max \{r (Q_i) \mid 1\leq i\leq n\}$.

\begin{remark}
  \label{rem:radius}.
  The name ``radius'' comes from the following observation.  In the
  special case where $Y = \mathbf B (X, d)$ for some continuous
  complete quasi-metric space $X, d$, we may define
  $\mu (x, r) \eqdef r$, and in that case the radius of $Q$ is
  $\max \{r \mid x \in X, (x, r) \in Q\}$.
\end{remark}

\begin{lemma}
  \label{lemma:Xd:compact:1}
  Let $X$, $Y$, $\mu$ be as above.  For every filtered family
  ${(Q_i)}_{i \in I}$ of non-empty compact saturated subsets of $Y$
  such that $\inf_{i \in I} r (Q_i)=0$, $\fcap_{i \in I} Q_i$ is a
  non-empty compact saturated subset of $X$.
\end{lemma}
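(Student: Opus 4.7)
The plan is to proceed in three clean steps. First, establish that $Q \eqdef \fcap_{i \in I} Q_i$ is a non-empty compact saturated subset of $Y$; second, show that $Q$ is actually contained in $X = \ker \mu$; third, transfer compactness and saturation from $Y$ down to $X$ using Lemma~\ref{lemma:Gdelta:subspace}.

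For the first step, since $Y$ is locally compact sober, it is well-filtered. Well-filteredness of $Y$ immediately gives that the filtered intersection $Q$ of compact saturated sets is compact saturated in $Y$. For non-emptiness, note that if $Q$ were empty, then $\bigcap_{i \in I} Q_i \subseteq \emptyset$ (and $\emptyset$ is open), so by well-filteredness some $Q_i$ would be contained in $\emptyset$, contradicting the assumption that each $Q_i$ is non-empty.

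The key step is showing $Q \subseteq X$, and this is where the radius hypothesis comes in. For any $y \in Q$ and any $i \in I$, we have $y \in Q_i$, so $\mu(y) \in \mu[Q_i]$, whence $\mu(y) \leq r(Q_i)$ by definition of the radius as the maximum of $\mu[Q_i]$. Therefore $\mu(y) \leq \inf_{i \in I} r(Q_i) = 0$, giving $\mu(y) = 0$ and hence $y \in \ker \mu = X$. The only subtlety here is remembering that $\mu[Q_i]$ really does attain a maximum, which follows from the fact that a compact subset of $[0,1]^{op}$ (where open sets are down-sets of the form $[0,a)$) must have a largest element, but this is already built into the definition of $r(Q_i)$ given just before the statement.

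Finally, to conclude that $Q$ is compact saturated in $X$ rather than merely in $Y$, invoke Lemma~\ref{lemma:Gdelta:subspace}: item~1 yields compactness in $X$ directly once $Q \subseteq X$ is known, and item~2 yields saturation in $X$ provided $X$ is upwards-closed in $Y$. The upward closure of $X$ in $Y$ follows from monotonicity of $\mu \colon Y \to \creal^{op}$: if $y \leq z$ in $Y$ and $\mu(y) = 0$, then $\mu(z) \leq \mu(y) = 0$ in the usual order on $\Rplus$, hence $\mu(z) = 0$ and $z \in X$. The main (mild) obstacle is just keeping track of the reversed order on $\creal^{op}$ when reading off properties of the radius and of the kernel.
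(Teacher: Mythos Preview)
Your proof is correct and follows essentially the same approach as the paper's: well-filteredness of $Y$ gives that $Q$ is non-empty compact saturated in $Y$, the radius hypothesis forces $\mu(y)=0$ for every $y\in Q$ so $Q\subseteq X$, and Lemma~\ref{lemma:Gdelta:subspace} items~1 and~2 transfer compactness and saturation to $X$. You supply a bit more detail than the paper does (an explicit argument for non-emptiness and for the upward closure of $X$ in $Y$), but the structure is identical.
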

\begin{proof}
  Since $Y$ is sober hence well-filtered,
  $Q \eqdef \fcap_{i \in I} Q_i$ is a non-empty compact saturated
  subset of $Y$.  We show that $Q$ is included in $X$ by showing that,
  for every $y \in Q$, for every $\epsilon > 0$, $\mu (y) < \epsilon$.
  Indeed, since $\inf_{i \in I} r (Q_i)=0$, we can find an index
  $i \in I$ such that $r (Q_i) < \epsilon$.  Then
  $\mu (y) \leq r (Q_i)$, by definition of radii, and since
  $y \in Q_i$.

  Hence $Q$ is compact saturated in $Y$, and included in $X$, hence it
  is compact saturated in $X$, by Lemma~\ref{lemma:Gdelta:subspace},
  items~1 and 2.
\end{proof}

\begin{lemma}
  \label{lemma:cont:Q}
  Let $X$, $Y$, $\mu$ be as above.  For every non-empty compact
  saturated subset $Q$ of $X$, for every open neighborhood $U$ of $Q$
  in $Y$, for every $\epsilon > 0$, there is a non-empty compact
  saturated subset $Q'$ of $Y$ such that
  $Q \subseteq \interior {Q'} \subseteq Q' \subseteq U$ and
  $r (Q') < \epsilon$.

  If $Y$ is a continuous dcpo, we can even take $Q'$ of the form
  $\upc A$ for some non-empty finite set $A = \{y_1, \cdots, y_n\}$,
  where $\mu (y_i) < \epsilon$ for every $i$.
\end{lemma}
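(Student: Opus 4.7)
The plan is to exploit the fact that $Q \subseteq X = \ker \mu$ to shrink the given neighborhood $U$ to an even smaller neighborhood on which $\mu$ stays below $\epsilon$, and then apply the local compactness of $Y$ to cover $Q$ by finitely many compact saturated sets sitting inside this smaller neighborhood.

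First I would observe that since $\mu \colon Y \to \creal^{op}$ is continuous, $W_\epsilon \eqdef \mu^{-1}([0,\epsilon))$ is open in $Y$. Because $Q \subseteq X = \ker\mu$, every $y \in Q$ satisfies $\mu(y) = 0 < \epsilon$, so $Q \subseteq W_\epsilon$, and therefore $U \cap W_\epsilon$ is an open neighborhood of $Q$ in $Y$. Next, since $Y$ is locally compact, for each $y \in Q$ I can choose a compact saturated subset $Q_y$ of $Y$ with $y \in \interior{Q_y} \subseteq Q_y \subseteq U \cap W_\epsilon$. The family ${(\interior{Q_y})}_{y \in Q}$ is an open cover of $Q$; by Lemma~\ref{lemma:Gdelta:subspace}(1), $Q$ is compact in $Y$, so I extract a finite subcover $\interior{Q_{y_1}},\dots,\interior{Q_{y_n}}$ and set $Q' \eqdef \bigcup_{i=1}^n Q_{y_i}$. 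Then $Q'$ is compact saturated, $Q \subseteq \bigcup_i \interior{Q_{y_i}} \subseteq \interior{Q'} \subseteq Q' \subseteq U \cap W_\epsilon \subseteq U$, and since $Q' \subseteq W_\epsilon$ we get $r(Q') = \max \mu[Q'] < \epsilon$.

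For the refinement when $Y$ is a continuous dcpo, I would use that $Y$ has a base of Scott-open sets of the form $\uuarrow z$ and that each $\upc z$ is compact saturated in the Scott topology (any Scott-open cover must contain a member which includes $z$, hence all of $\upc z$ by upward closure). For each $y \in Q$, since $U \cap W_\epsilon$ is Scott-open and $y = \dsup \{z \mid z \ll y\}$, there is some $z_y \ll y$ with $z_y \in U \cap W_\epsilon$, and then $y \in \uuarrow z_y \subseteq \upc z_y \subseteq U \cap W_\epsilon$. The Scott-open sets $\uuarrow z_y$ cover $Q$; extracting a finite subcover yields points $z_{y_1},\dots,z_{y_n}$, and I set $A \eqdef \{z_{y_1},\dots,z_{y_n}\}$ and $Q' \eqdef \upc A$. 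Each $z_{y_i}$ lies in $W_\epsilon$, so $\mu(z_{y_i}) < \epsilon$ as required, and the inclusions $Q \subseteq \bigcup_i \uuarrow z_{y_i} \subseteq \interior{Q'} \subseteq Q' \subseteq U$ hold as before.

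There is no serious obstacle here; the only point that requires a bit of care is keeping straight the two roles played by $\epsilon$: controlling $r(Q')$ (which is achieved by intersecting $U$ with $W_\epsilon$) and controlling the inclusion into $U$ itself (which is achieved by local compactness, or in the dcpo case by the interpolation property of $\ll$ together with Scott-openness of $U \cap W_\epsilon$).
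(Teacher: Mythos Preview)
Your proof is correct and follows essentially the same route as the paper: intersect $U$ with $\mu^{-1}([0,\epsilon))$, then use local compactness (resp.\ the way-below base in the continuous-dcpo case) to interpolate a compact saturated set between $Q$ and that intersection. The only cosmetic differences are that the paper exploits the \emph{directed} cover of $U \cap \mu^{-1}([0,\epsilon))$ by interiors of compact saturated sets to obtain a single $Q'$ immediately, whereas you take a finite union, and the paper phrases the second part via the locally finitary compact property of quasi-continuous dcpos rather than directly via $\ll$.
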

\begin{proof}
  $U \cap \mu^{-1} ([0, \epsilon))$ is open, hence by local
  compactness it is the directed union of sets of the form
  $\interior {Q'}$, where each $Q'$ is compact saturated and included
  in $U \cap \mu^{-1} ([0, \epsilon))$.  The open sets
  $\interior {Q'}$ form a cover of $Q$, which is compact saturated in
  $Y$ by Lemma~\ref{lemma:Gdelta:subspace}, items~1 and 2, so some
  $Q'$ as above is such that $Q \subseteq \interior {Q'}$.  By
  construction, $Q' \subseteq U$.  Also, $r (Q') < \epsilon$ because
  $Q' \subseteq \mu^{-1} ([0, \epsilon))$.

  We prove the second part of the lemma in the more general case where
  $Y$ is quasi-continuous.  Then $Y$ is locally finitary compact
  \cite[Exercise~5.2.31]{JGL-topology}, meaning that we can replay the
  above argument with $Q'$ of the form $\upc A$ for $A$ finite.
\end{proof}

% Let us say that a subset of $Y$ is \emph{$\epsilon$-fine} if and only
% if it is included in a finite union of interiors of compact saturated
% subsets $Q_1$, \ldots, $Q_n$ of radii less than $\epsilon$.
% For every $Q \subseteq Y$, and every $\epsilon > 0$, let us call
% \emph{$\epsilon$-fine cover} $C$ any finite collection of compact
% saturated subsets $Q_1$, \ldots, $Q_n$ of $Y$ of radii $<\epsilon$ and
% such that $Q \subseteq \bigcup_{i=1}^n \interior {Q_i}$.
% \begin{proposition}
%   \label{prop:Xd:compact}
%   Let $X$, $Y$, $\mu$ be as above.  
%   For every compact saturated subset
%   $Q$ of $X$, there is a filtered family

%   ${(Q_{ni})}_{i \in I_n}$, $n \in \nat$, of compact saturated of $Y$
%   such that:
%   \begin{enumerate}
%   \item $r (Q_{ni}) < 1/2^n$ for all $n \in \nat$ and $i \in I$;
%   \item $Q \subseteq \interior {A_n}$ for every $n \in \nat$, where
%     $A_n \eqdef \bigcup_{i \in I_n} Q_{ni}$;
%   \item $A_{n+1} \subseteq \interior {A_n}$ for every $n \in \nat$;
%   \item $Q = \fcap_{n \in \nat} \interior {A_n} = \fcap_{n \in \nat} A_n$.
%   \end{enumerate}
% \end{proposition}

\begin{theorem}
  \label{thm:Xd:compact}
  Let $X$, $Y$, $\mu$ be as above.  The non-empty compact saturated
  subsets of $X$ are exactly the filtered intersections
  $\fcap_{i \in I} Q_i$ of (interiors of) non-empty compact saturated
  subsets $Q_i$ of $Y$ such that $\inf_{i \in I} r (Q_i)=0$.
  Moreover, we can choose that filtered intersection to be equal to
  $\fcap_{i \in I} \interior {Q_i}$.

  When $Y$ is a continuous dcpo, we can even take $Q_i$ of the form
  $\upc A_i$, $A_i$ finite.
\end{theorem}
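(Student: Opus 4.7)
The plan is to exploit the presentation $X = \ker \mu$ to promote compact saturated subsets of $X$ into compact saturated subsets of $Y$, and then to approximate them from outside using the shrinking construction afforded by Lemma~\ref{lemma:cont:Q}.

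The key observation is that $\mu\colon Y \to \creal^{op}$ is continuous, hence monotone between the specialization preorder of $Y$ and the reverse order on $\creal$, so $X = \mu^{-1}(\{0\})$ is upwards-closed in $Y$. By Lemma~\ref{lemma:Gdelta:subspace} (items~1 and~2), any non-empty compact saturated subset $Q$ of $X$ is then compact \emph{and} saturated in $Y$ as well. This is the one place where the specific shape $X = \ker\mu$ matters, and it is where I expect the main difficulty to lie, since a general $G_\delta$ subspace need not be upwards-closed in its ambient space.

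For the forward (``only if'') direction, set $\mathcal{Q} \eqdef \{Q' \subseteq Y \mid Q'\text{ compact saturated},\ Q \subseteq \interior{Q'}\}$. Two applications of Lemma~\ref{lemma:cont:Q} show that $\mathcal{Q}$ is filtered (apply it with $U = \interior{Q'_1} \cap \interior{Q'_2}$) and that $\inf_{Q' \in \mathcal{Q}} r(Q') = 0$ (apply it with $U = Y$ and arbitrary $\epsilon > 0$). Trivially $Q \subseteq \fcap \mathcal{Q}$. For the reverse inclusion, since $Q$ is saturated in $Y$, it equals the intersection of its open neighborhoods in $Y$; each such open neighborhood contains some $Q' \in \mathcal{Q}$ by the standard fact that compact saturated subsets of a locally compact sober space have a base of compact saturated neighborhoods. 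Hence $\fcap \mathcal{Q}$ is contained in every open neighborhood of $Q$, so in $Q$. Applying Lemma~\ref{lemma:cont:Q} once more with $U = \interior{Q'}$ shows that every $Q' \in \mathcal{Q}$ contains some $Q'' \in \mathcal{Q}$ with $Q'' \subseteq \interior{Q'}$, whence $\fcap_{Q' \in \mathcal{Q}} \interior{Q'} = \fcap_{Q' \in \mathcal{Q}} Q' = Q$, establishing the ``moreover'' clause.

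The backward (``if'') direction is immediate from Lemma~\ref{lemma:Xd:compact:1}. For the refinement when $Y$ is a continuous dcpo (or, more generally, quasi-continuous), one restricts $\mathcal{Q}$ to sets of the form $\upc A$ with $A$ a finite subset of $Y$; the second part of Lemma~\ref{lemma:cont:Q} produces such $\upc A$'s satisfying every requirement used above (small radius, and $Q \subseteq \interior{\upc A}$ inside any prescribed open neighborhood of $Q$), so the same arguments go through verbatim.
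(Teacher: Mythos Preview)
Your proof is correct and follows essentially the same approach as the paper: both directions are handled identically (Lemma~\ref{lemma:Xd:compact:1} for one, the family $\mathcal Q$ of compact saturated neighborhoods of $Q$ in $Y$ together with Lemma~\ref{lemma:cont:Q} for the other), and the continuous-dcpo refinement is obtained by restricting $\mathcal Q$ to finitary sets in both cases. The only cosmetic difference is that for the ``moreover'' clause the paper observes directly that $Q \subseteq \interior{Q'}$ for every $Q' \in \mathcal Q$ by definition, giving $Q \subseteq \fcap \interior{Q'} \subseteq \fcap Q' = Q$ in one line, whereas you take a slightly longer (but equally valid) route via cofinality.
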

\begin{proof}
  One direction is Lemma~\ref{lemma:Xd:compact:1}.  Conversely, let
  $Q$ be compact saturated in $X$, and let ${(Q_i)}_{i \in I}$ be the
  family of compact saturated subsets of $Y$ such that
  $Q \subseteq \interior {Q_i}$ (respectively, only those of the form
  $\upc A_i$ with $A_i$ finite, if $Y$ is a continuous dcpo).  By
  Lemma~\ref{lemma:cont:Q} with $U \eqdef Y$, for every $\epsilon > 0$
  there is an index $i \in I$ such that $r (Q_i) < \epsilon$, so
  $\inf_{i \in I} r (Q_i)=0$.  This also shows that the family is
  non-empty.  For any two elements $Q_i$, $Q_j$ of the family, we
  apply Lemma~\ref{lemma:cont:Q} with
  $U \eqdef \interior {Q_i} \cap \interior {Q_j}$ (and $\epsilon$
  arbitrary), and we obtain an element $Q_k$ such that
  $Q_k \subseteq \interior {Q_i} \cap \interior {Q_j}$.  This shows
  that the family is filtered.

  For every open neighborhood $U$ of $Q$ in $Y$,
  Lemma~\ref{lemma:cont:Q} (again) shows the existence of an index
  $i \in I$ such that $Q_i \subseteq U$.  Therefore
  $Q = \fcap_{i \in I} Q_i$.  Finally, $Q \subseteq \interior {Q_i}$
  for every $i \in I$, so
  $Q \subseteq \fcap_{i \in I} \interior {Q_i} \subseteq \fcap_{i \in
    I} Q_i = Q$, so all the terms involved are equal.
\end{proof}

In particular, if $X, d$ is a continuous complete quasi-metric space,
and taking $Y \eqdef \mathbf B (X, d)$ and $\mu (x, r) \eqdef r$, then
the compact saturated subsets of $X$ (in its $d$-Scott topology) are
exactly the filtered intersections of sets $C_i \eqdef Q_i \cap X$.
For each $i$, we can take $Q_i$ of the form
$\upc \{(x_1, r_1), \ldots, (x_n, r_n)\}$ where
$r (Q_i) = \max \{r_1, \cdots, r_n\}$ is arbitrarily small.  Then the
sets $C_i$ are easily seen to be finite unions of closed balls
$B_{x_i, \leq r_i}$ of arbitrarily small radius.  That explains the
connection with Hausdorff's theorem cited earlier.  Note, however,
that closed balls are in general not closed (except when $X, d$ is
metric), and need not be compact either.

\section{Extensions of continuous valuations}
\label{sec:extens-cont-valu}

Continuous valuations were introduced in \cite{jones89,Jones:proba}.
As far as measure theory is concerned, we refer the reader to any
standard reference, such as \cite{Billingsley:probmes}.

A \emph{valuation} $\nu$ on a space $X$ is a map from the lattice of
open subsets $\Open X$ of $X$ to $\creal$ that is \emph{strict}
($\nu (\emptyset)=0$) and \emph{modular}
($\nu (U) + \nu (V) = \nu (U \cup V) + \nu (U \cap V)$).  A
\emph{continuous valuation} is additionally Scott-continuous.  Every
continuous valuation $\nu$ defines a linear prevision $G$ by
$G (h) \eqdef \int_{x \in X} h (x) d\nu$, and conversely any linear
prevision defines a continuous valuation $\nu$ by
$\nu (U) \eqdef G (\chi_U)$, where $\chi_U$ is the characteristic map
of $U$.

Any pointwise directed supremum of continuous valuations is a
continuous valuation again.

A continuous valuation $\nu$ is \emph{locally finite} if and only if
every point has an open neighborhood $U$ such that $\nu (U) < \infty$.
It is \emph{bounded} if and only if $\nu (X) < \infty$.  Let
$\mathcal A (\Open X)$ be the smallest Boolean algebra of subsets of
$X$ containing $\Open X$.  The elements of $\mathcal A (\Open X)$ are
the finite disjoint unions of \emph{crescents}, where a crescent is a
difference $U \diff V$ of two open sets.  The Smiley-Horn-Tarski
theorem \cite{smiley44,HornTarski48:ext} states that every bounded
valuation extends to a unique strict modular map from
$\mathcal A (\Open X)$ to $\Rplus$.

Given any open set $U$, $\nu_{|U}$ is the continuous valuation defined
by $\nu_{|U} (V) \eqdef \nu (U \cap V)$; that is bounded if and only
if $\nu (U) < \infty$.

Let us write $\Borel X$ for the Borel $\sigma$-algebra of $X$.  A
measure on $X$ is a $\sigma$-additive map from $\Borel X$ to $\creal$,
or equivalently a strict, modular and $\omega$-continuous map from
$\Borel X$ to $\creal$.  The latter makes it clear that the pointwise
directed supremum of a family (even uncountable) of measures is a
measure.

We will use the following standard fact, which we shall call
\emph{Kolmogorov's criterion}: given a bounded measure $\mu$, and a
descending sequence ${(W_n)}_{n \in \nat}$ of Borel sets,
$\mu (\fcap_{n \in \nat} W_n) = \inf_{n \in \nat} \mu (W_n)$.  We will
also use the following: any two bounded measures that agree on
$\Open X$ agree on the whole of $\Borel X$.
% We will
% also use the following consequence of the so-called
% $\lambda\pi$-theorem \cite{Sierp:lambdapi}.  A \emph{$\pi$-system} if
% a family of subsets of $X$ that is closed under finite intersections.
% Then every two bounded measures that agree on a $\pi$-system
% $\Pi \subseteq \Borel X$ also agree on the smallest $\sigma$-algebra
% containing $\Pi$.

If $X$ is countably-based, or more generally if $X$ is
\emph{hereditarily Lindel\"of} (viz., every directed family of open
subsets has a cofinal monotone sequence), every measure $\mu$ on $X$
with its Borel $\sigma$-algebra restricts to a continuous valuation on
the open sets.  The following theorem shows that, conversely, every
continuous valuation $\nu$ on an LCS-complete space extends to a
measure $\mu$.  We recall that this holds for locally finite
continuous valuations on locally compact sober spaces
\cite{alvarez-manilla00,KL:measureext}.
\begin{lemma}
  \label{lemma:ext:unique}
  Let $\nu$ be a bounded valuation on a topological space $X$.  If
  $\nu$ has an extension to a measure $\mu$ on $\Borel X$, then $\mu$
  coincides with the \emph{crescent outer measure} $\nu^*$ on
  $\Borel X$:
  $\nu^* (E) \eqdef \inf_{\mathcal F} \sum_{C \in \mathcal F} \nu
  (C)$, where $\mathcal F$ ranges over the countable families of
  crescents whose union contains $E$.
\end{lemma}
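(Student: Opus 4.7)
The plan is to identify $\mu$ with the standard Carathéodory outer measure built from the Smiley-Horn-Tarski extension of $\nu$, and then show that this outer measure coincides with the crescent outer measure $\nu^*$.

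First, I would invoke the Smiley-Horn-Tarski theorem, already recalled above, to extend $\nu$ uniquely to a strict modular map $\bar\nu$ on the algebra $\mathcal A(\Open X)$. A direct computation using modularity gives $\bar\nu(U \setminus V) = \nu(U) - \nu(U \cap V)$ for every crescent. Since $\mu$ is a bounded measure extending $\nu$, its restriction to $\mathcal A(\Open X)$ is also a strict modular map extending $\nu$, and so by the uniqueness in Smiley-Horn-Tarski it equals $\bar\nu$. In particular $\mu(C) = \nu(C)$ for every crescent $C$. The easy inequality $\mu(E) \leq \nu^*(E)$ is then immediate: for any countable cover $\{C_i\}_{i}$ of $E$ by crescents, countable subadditivity of $\mu$ gives $\mu(E) \leq \sum_i \mu(C_i) = \sum_i \nu(C_i)$, and taking the infimum over $\mathcal F$ yields the bound.

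For the reverse inequality, I would observe that $\bar\nu$ is actually a bounded premeasure on the algebra $\mathcal A(\Open X)$: its $\sigma$-additivity is inherited directly from that of $\mu$, because the two agree on $\mathcal A(\Open X)$. Carathéodory's extension theorem then furnishes a unique measure $\mu^\dagger$ on $\sigma(\mathcal A(\Open X)) = \Borel X$ extending $\bar\nu$ (uniqueness is automatic since $\bar\nu(X) = \nu(X) < \infty$ makes $\bar\nu$ trivially $\sigma$-finite), and it is given by the standard formula $\mu^\dagger(E) = \inf\{\sum_n \bar\nu(A_n) \mid E \subseteq \bigcup_n A_n,\ A_n \in \mathcal A(\Open X)\}$. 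Since $\mu$ itself is such an extension, uniqueness gives $\mu = \mu^\dagger$ on $\Borel X$.

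It remains to identify $\mu^\dagger$ with $\nu^*$. Every $A_n \in \mathcal A(\Open X)$ is a finite disjoint union of crescents $C_{n,1}, \ldots, C_{n,k_n}$, and finite additivity of $\bar\nu$ gives $\bar\nu(A_n) = \sum_{j} \nu(C_{n,j})$; hence any countable cover of $E$ by $\mathcal A(\Open X)$-sets refines to a countable cover by crescents with the same total value, showing $\nu^*(E) \leq \mu^\dagger(E)$. The converse inequality is trivial since every crescent is already a member of $\mathcal A(\Open X)$. Combining, $\mu(E) = \mu^\dagger(E) = \nu^*(E)$ on $\Borel X$. The main (modest) obstacle is the appeal to Carathéodory uniqueness, but boundedness of $\nu$ together with the $\sigma$-additivity of $\bar\nu$ transferred from $\mu$ make this routine; the rest is essentially an unwinding of the two infima.
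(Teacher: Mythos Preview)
Your proof is correct and shares the overall Carath\'eodory strategy with the paper, but the two are organized differently.  The paper works directly with the crescent outer measure $\nu^*$: it first checks $\nu^*(U)=\mu(U)$ on open sets, then verifies by hand that every open set $U$ is $\nu^*$-measurable (using the combinatorial fact that $C\cap U$ and $C\setminus U$ are again crescents for any crescent $C$), concludes that $\nu^*$ restricts to a measure on $\Borel X$, and finishes via the standing fact that two bounded measures agreeing on $\Open X$ must agree on all of $\Borel X$.  You instead route everything through the algebra $\mathcal A(\Open X)$: you observe that $\bar\nu$ is a genuine premeasure by borrowing $\sigma$-additivity from $\mu$, invoke Carath\'eodory's extension theorem as a black box to produce $\mu^\dagger$, use the uniqueness clause (boundedness giving $\sigma$-finiteness) to identify $\mu=\mu^\dagger$, and only at the end compare the two outer-measure infima to get $\mu^\dagger=\nu^*$.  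Your route trades the explicit measurability verification for an appeal to packaged machinery and is slightly slicker; the paper's route is more self-contained and isolates the one place where the crescent structure is actually used.
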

Note that $\nu (C)$ makes sense by the Smiley-Horn-Tarski theorem.

\begin{proof}
  For every open set $U$, taking $\mathcal F \eqdef \{U\}$, we obtain
  $\nu^* (U) \leq \nu (U) = \mu (U)$.  Conversely, for every countable
  family $\mathcal F$ of crescents $C$ whose union contains $U$,
  $\sum_{C \in \mathcal F} \nu (C) = \sum_{C \in \mathcal F} \mu (C)
  \geq \mu (\bigcup_{C \in \mathcal F} C) \geq \mu (U) = \nu (U)$, so
  $\nu^* (U) = \mu (U)$.

  It is standard that $\nu^*$ defines a measure on the
  $\sigma$-algebra of \emph{measurable sets}, where a subset $A$ of
  $X$ is called measurable if and only if for all subsets $B$ of $X$,
  $\nu^* (B) = \nu^* (B \cap A) + \nu^* (B \diff A)$ (see, e.g.,
  \cite[Theorem~3.2]{KL:measureext}).  We claim that every open set
  $U$ is measurable.  Let us fix a subset $B$ of $X$.  For every
  crescent $C$, $C \cap U$ and $C \diff U$ are crescents again.
  Hence, for every countable family
  $\mathcal F \eqdef {(C_n)}_{n \in \nat}$ of crescents whose union
  contains $B$,
  $\sum_{C \in \mathcal F} \nu (C) = \sum_{n \in \nat} \nu (C_n \cap
  U) + \nu (C_n \diff U) \geq \nu^* (B \cap U) + \nu^* (B \diff U)$.
  Taking infima over $\mathcal F$,
  $\nu^* (B) \geq \nu^* (B \cap U) + \nu^* (B \diff U)$.  Conversely,
  for every countable family $\mathcal F$ of crescents whose union
  contains $B \cap U$, for every countable $\mathcal F'$ of crescents
  whose union contains $B \diff U$, $\mathcal F \cup \mathcal F'$ is a
  countable family of crescents whose union contains $B$, so
  $\nu^* (B \cap U) + \nu^* (B \diff U) \geq \nu^* (B)$, whence the
  equality.  Since the measurable sets contain all the open sets, they
  also contain $\Borel X$.

  Hence we have two measures on $\Borel X$, $\mu$ and $\nu^*$, which
  coincide on the open sets.  In particular, $\mu (X) = \nu^* (X) <
  \infty$, so they are bounded.  It follows that $\mu$ and $\nu^*$
  agree on the whole of $\Borel X$.
\end{proof}

\begin{recapthm}{thm:ext}
  Let $X$ be an LCS-complete space.  \propextaux
\end{recapthm}
\begin{proof}
  Let $\nu$ be a continuous valuation on $X$, and let $X$ be written
  as $\fcap_{n \in \nat} W_n$, where each $W_n$ is open in some
  locally compact sober space $Y$.
  
  Let ${(U_i)}_{i \in I}$ be the family of open subsets of $X$ of
  finite $\nu$-measure.  This is a directed family, since $\nu (U_i
  \cup U_j) \leq \nu (U_i) + \nu (U_j)$.  We write $U_\infty$ for
  $\dcup_{i \in I} U_i$.  If $\nu$ were locally finite, then
  $U_\infty$ would be equal to $X$, but we do not assume so much.
  
  For each $i \in I$, $\nu_{|U_i}$ is a bounded continuous valuation.
  Letting $e \colon X \to Y$ be the inclusion map, the image of
  $\nu_{|U_i}$ by $e$ is another bounded continuous valuation, which
  we write as $\nu'_i$: for every open subset $V$ of $Y$,
  $\nu'_i (V) = \nu_{|U_i} (e^{-1} (V)) = \nu (V \cap U_i)$.  Note
  that $i \sqsubseteq j$ implies $\nu'_i \leq \nu'_j$ (namely,
  $\nu'_i (V) \leq \nu'_j (V)$ for every $V$).

  We claim that $i \sqsubseteq j$ implies that for every crescent $C$,
  $\nu'_i (C) \leq \nu'_j (C)$.  In order to show that, let us write
  $C$ as $U \diff V$, where $U$ and $V$ are open in $Y$.  Replacing
  $V$ by $U \cap V$ if needed, we may assume $V \subseteq U$.  For
  every $k \sqsubseteq j$, we have:
  \begin{align}
    \nonumber
    \nu_{|U_j} (C \cap U_k)
    & = \nu_{|U_j} ((U \cap U_k) \diff (V \cap U_k)) \\ \nonumber
    & = \nu_{|U_j} (U \cap U_k) - \nu_{|U_j} (V \cap U_k) 
    & \text{since }\nu_{|U_j}\text{ is additive on }\mathcal A (\Open X)\\ \nonumber
    & = \nu (U \cap U_k) - \nu (V \cap U_k)
    & \text{since }U_k \subseteq U_j \\
    \label{eq:nui}
    & = \nu_{|U_k} (U) - \nu_{|U_k} (V) = \nu_{|U_k} (C).
  \end{align}
  Taking $k \eqdef i$ in (\ref{eq:nui}),
  $\nu_{|U_i} (C) = \nu_{|U_j} (C \cap U_i)$, which is less than or
  equal to $\nu_{|U_j} (C \cap U_j)$ (the difference is
  $\nu_{|U_j} (C \cap U_j \diff U_i) \geq 0$), and the latter is equal
  to $\nu_{|U_j} (C)$ by (\ref{eq:nui}) with $k \eqdef j$.

  We have seen that $\nu'_i$ extends to a measure $\mu_i$ on $Y$.  By
  Lemma~\ref{lemma:ext:unique}, $\mu_i={\nu'_i}^*$.  Using the formula
  for the crescent outer measure, we obtain that if $i \sqsubseteq j$,
  then $\mu_i (E) \leq \mu_j (E)$ for every $E \in \Borel Y$.

  Since $X$ is $G_\delta$ hence Borel in $Y$, $\Borel X$ is included
  in $\Borel Y$.  Hence $\mu_i$ also defines a measure on the smaller
  $\sigma$-algebra $\Borel X$.  We still write it as $\mu_i$, and we
  note that $i \sqsubseteq j$ implies that $\mu_i (E) \leq \mu_j (E)$
  for every $E \in \Borel X$.  Also, $\mu_i$ extends $\nu_{|U_i}$, as
  we now claim.  Let $U$ be any open subset of $X$.  By definition of
  the subspace topology, $U$ is the intersection of some open subset
  $\widehat U$ of $Y$ with $X$.  $U$ is then equal to
  $\fcap_{n \in \nat} \widehat U \cap W_n$.
  % For each $n \in \nat$, $\nu'_i (\widehat U \cap W_n) = \mu_i
  % (\widehat U \cap W_n)$.
  Now
  $\mu_i (U) = \mu_i (\fcap_{n \in \nat} \widehat U \cap W_n) =
  \inf_{n \in \nat} \mu_i (\widehat U \cap W_n)$ (Kolmogorov's
  criterion)
  $= \inf_{n \in \nat} \nu'_i (\widehat U \cap W_n) = \inf_{n \in
    \nat} \nu_{|U_i} (U)$ (since
  $\widehat U \cap W_n \cap U_i = U \cap U_i$) $= \nu_{|U_i} (U)$.

  Any directed supremum of measures is a measure.  Hence consider
  $\mu (E) \eqdef \dsup_{i \in I} \mu_i (E)$.  For every open subset
  $U$ of $X$,
  $\mu (U) = \dsup_{i \in I} \mu_i (U) = \dsup_{i \in I} \nu_{|U_i}
  (U) = \dsup_{i \in I} \nu (U \cap U_i) = \nu (U \cap U_\infty) =
  \nu_{|U_\infty} (U)$, so $\mu$ extends $\nu_{|U_\infty}$.  Let
  $\iota$ be the indiscrete measure on $X \diff U_\infty$, namely
  $\iota (E)$ is equal to $\infty$ if $E$ intersects
  $X \diff U_\infty$, to $0$ if $E \subseteq U_\infty$.  We check that
  the measure $\mu + \iota$ extends $\nu$.  For every open subset $U$
  of $X$, either $U \subseteq U_\infty$ and
  $\nu (U) = \nu_{|U_\infty} (U) = \mu (U) = \mu (U) + \iota (U)$, or
  $U$ intersects $X \diff U_\infty$, say at $x$.  In the latter case,
  $\iota (U) = \infty$ so $\mu (U) + \iota (U) = \infty$, while
  $\nu (U) = \infty$ because, by definition, $x$ has no open
  neighborhood of finite $\nu$-measure.
\end{proof}

\begin{remark}
  \label{rem:pi02}
  More generally, the proof of Theorem~\ref{thm:ext} would work on
  $\bPi^0_2$ subsets of locally compact sober spaces.  (That is a
  strict extension, by Proposition~\ref{prop:ccmq:UCO}.)  In that
  case, we write $X$ as $\bigcap_{n \in \nat} W_n$ where each $W_n$ is
  the union of a closed and an open set.  Replacing $W_n$ by
  $\bigcap_{i=0}^n W_i$, we make sure that the sequence of sets $W_n$
  is descending, and $W_n$ is still in $\mathcal A (\Open Y)$.  The
  rest of the proof is unchanged.
\end{remark}
\section{Conclusion}
\label{sec:conclusion}

We have given two applications of the theory of LCS-complete spaces
(Theorem~\ref{thm:ext}, Corollary~\ref{corl:homeo:prev}).  We should
mention a final application \cite[Theorem~9.4]{JGL:projlim}, which
will be published elsewhere: given a projective system
${(p_{ij} \colon X_j \to X_i))}_{i \sqsubseteq j \in I}$ of
LCS-complete spaces such that $I$ has a countable cofinal subset,
given locally finite continuous valuations $\nu_i$ on $X_i$ that are
compatible in the sense that for all $i \sqsubseteq j$ in $I$, $\nu_i$
is the image valuation of $\nu_j$ by $p_{ij}$, there is a unique
continuous valuation $\nu$ on the projective limit $X$ of the
projective system such that $\nu$ projects back to $\nu_i$ for every
$i \in I$.  This extends a famous theorem of Prohorov's
\cite{Prohorov:projlim}, which appears as the subcase where each $X_i$
is Polish and each $\nu_i$ is a measure.

One question that remains open, though, is: $(i)$ Is the projective limit
$X$ of a projective system of LCS-complete spaces as above again
LCS-complete?

That is only one of many remaining open questions: $(ii)$ Is every
sober compactly Choquet-complete space LCS-complete?  $(iii)$ Is every
sober convergence Choquet-complete space domain-complete?  $(iv)$ Is
every coherent LCS-complete space a $G_\delta$ subset of a stably
(locally) compact space? $(v)$ Is every $\bPi^0_2$ subset of an
domain-complete space again domain-complete?  (A similar result fails
for LCS-complete spaces, by Proposition~\ref{prop:ccmq:UCO}.)
% This
% question is prompted by Remark~\ref{rem:pi02}: in that case the
% LCS-complete spaces would be exactly the $\bPi^0_2$ subsets of locally
% compact sober spaces.
$(vi)$ Is every \emph{countably correlated} space (i.e., every space
homeomorphic to a $\bPi^0_2$ subset of $\pow (I)$ for some, possibly
uncountable set $I$, see \cite{Chen:qPolish}) LCS-complete?  $(vii)$
Is every LCS-complete space countably correlated?  $(viii)$ Are
regular \v{C}ech-complete spaces LCS-complete, where \v{C}ech-complete
is understood as in \cite[Exercise~6.21]{JGL-topology}?  $(ix)$ Are
all regular LCS-complete spaces \v{C}ech-complete?

% generaliser Frolik?
% image of domain-complete by open continuous map into T0 space is qdomain-complete?
% (see Theorem 40, de Brecht's paper on qPolish spaces)
% locale characterization?
% domain-complete qui sont T_D=scattered? (voir Thm 65 de Brecht)
% si on ajoute un # denombrable de mesurable (ou de Sigma^0_2?) comme
% ouverts, ca reste domain-complete?
% retracts?  (corl 26)
% R x (0,infty)  union Q x {0} est Baire, mais pas completement

% metrisable (https://math.stackexchange.com/questions/3003649/example-of-a-baire-metric-space-which-is-not-completely-metrizable)

\bibliographystyle{entcs}
\ifentcs
\bibliography{gdelta}
\else

\fi
\end{document}